\documentclass[10pt,leqno]{amsart}
\setlength{\textwidth}{14.8cm}
\setlength{\textheight}{21.3cm}
\hoffset=-35pt
\usepackage{amsfonts,amssymb}
\usepackage{color}
\usepackage{graphicx}
\newtheorem{theorem}{Theorem}[section]
\newtheorem{lemma}[theorem]{Lemma}
\newtheorem{corollary}[theorem]{Corollary}
\newtheorem{proposition}[theorem]{Proposition}
\newtheorem{assumption}[theorem]{Assumption}

\theoremstyle{definition}
\newtheorem{definition}[theorem]{Definition}

\newtheorem{remark}[theorem]{Remark}

\numberwithin{equation}{section}

\newcommand{\R}{{\mathbb R}}
\newcommand{\C}{{\mathbb C}}
\newcommand{\N}{{\mathbb N}}
\newcommand{\Z}{{\mathbb Z}}

\newcommand{\QQ}{{\mathcal{Q}}}
\newcommand{\cR}{{\cal R}}

\renewcommand{\Re}{\operatorname{Re}}
\renewcommand{\Im}{\,{\rm Im}\,}
\renewcommand{\d}{{\mathrm d}}

\renewcommand{\epsilon}{\varepsilon}
\newcommand{\cal}[1]{{\mathcal{#1}}}

\newcommand{\supp}{\operatorname{supp}}

\newcommand{\e}{\operatorname{e}}

\newcommand{\eps}{\varepsilon}

\renewcommand{\theenumi}{(\roman{enumi})}
\renewcommand{\labelenumi}{\theenumi}

\newcommand{\dd}{{\mathrm{d}}}
\newcommand{\cL}{{\mathcal{L}}}
\newcommand{\cT}{{\mathcal{T}}}

\begin{document}

\title[Square roots of divergence operators]{The square root problem for second
	order, divergence form operators with mixed boundary conditions on
	$L^p$}
\author{Pascal Auscher}
\address{Laboratoire de Math\'ematiques d'Orsay, Universit\'e Paris-Sud, UMR du
CNRS 8628, 91405 Orsay Cedex, France}
\email{Pascal.Auscher@math.u-psud.fr}

\author{Nadine Badr}
\address{Universit\'e de Lyon, CNRS, Universit\'e Lyon 1, Institut Camille
Jordan, 43, boulevard du 11 Novembre 1918, 69622 Villeurbanne Cedex, France}
\email{badr@math.univ-lyon1.fr}

\author{Robert Haller-Dintelmann}
\address{Technische Universit\"at Darmstadt, Fachbereich
Mathematik, Schlossgartenstr.\@ 7, 64298 Darmstadt, Germany}
\email{haller@mathematik.tu-darmstadt.de}

\author{Joachim Rehberg}
\address{Weierstrass Institute for Applied Analysis and Stochastics,
 Mohrenstr.\@ 39, 10117 Berlin, Germany}
\email{rehberg@wias-berlin.de}

\subjclass{Primary: 35J15, 42B20, 47B44; Secondary: 26D15, 46B70, 35K20}
\date{}
\keywords{Kato's square root problem, Elliptic operators with bounded
measurable coefficients, Interpolation in case of mixed boundary values,
Hardy's inequality, Calderon-Zygmund decomposition}

\begin{abstract}
We show that, under general conditions,
the operator $\bigl (-\nabla \cdot \mu \nabla +1\bigr )^{1/2}$ with mixed
boundary conditions provides a topological isomorphism between 
$W^{1,p}_D(\Omega)$ and $L^p(\Omega)$, for $p \in {]1,2[}$
if one presupposes that this isomorphism holds true for $p=2$. The domain
$\Omega$ is assumed to be bounded, the Dirichlet part $D$ of the boundary 
has to satisfy the well-known Ahlfors-David condition, whilst for the 
points from $\overline {\partial \Omega \setminus D}$ the existence of
bi-Lipschitzian boundary charts is required.
\end{abstract}

\maketitle
\section{Introduction}
The main purpose of this paper is to identify the domain of the square root of
a divergence form operator $-\nabla \cdot \mu \nabla + 1$ on $L^p(\Omega)$ as
a Sobolev space $W^{1,p}_D(\Omega)$ of differentiability order $1$ for $p \in
{]1,2]}$. (The subscript $D$ indicates the subspace of $W^{1,p}(\Omega)$ whose
elements vanish on the boundary part $D$.) Our focus lies on non-smooth
geometric situations in $\R^d$ for $d \ge 2$. So, we allow for mixed boundary
conditions and, additionally, deviate from the Lipschitz property of the domain
$\Omega$ in the following spirit: the boundary $\partial \Omega$ decomposes
into a closed subset $D$ (the Dirichlet part) and its complement, which may
share a common frontier within $\partial \Omega$. Concerning $D$, we only
demand that it satisfies the well-known Ahlfors-David condition (equivalently:
is a $(d-1)$-set in the sense of Jonsson/Wallin \cite[II.1]{jons}), and only
for points from the complement we demand bi-Lipschitzian charts around. As
special cases, the pure Dirichlet ($D = \partial \Omega$) and pure Neumann case
($D = \emptyset$) are also included in our considerations. Finally the
coefficient function $\mu$ is just supposed to be real, measurable, bounded and
elliptic in general, cf.\@ Assumption~\ref{assu-coeffi}. Together, this setting
should cover nearly all geometries that occur in real-world problems -- as long
as the domain does not have irregularities like cracks meeting the Neumann
boundary part $\partial \Omega \setminus D$. In particular, all boundary points
of a polyhedral $3$-manifold with boundary admit bi-Lipschitzian boundary
charts -- irrespective how 'wild' the local geometry is,
cf.\@ \cite[Thm.~3.10]{Ha/hoe/ka/re/zie}.

The identification of the domain for fractional powers of elliptic operators,
in particular that of square roots, has a long history. Concerning Kato's
square root problem -- in the Hilbert space $L^2$ -- see
e.g.\@ \cite{mcintosh}, \cite{terelst}, \cite{EHT13},
\cite{a/h/l/m/t} (here only the non-selfadjoint case is of interest). Early
efforts, devoted to the determination of domains for fractional powers in the
non-Hilbert space case seem to culminate in \cite{seeley}.
In recent years the problem has been investigated in the case of $L^p$ ($p \neq
2$) for instance in \cite{auschmem}, \cite{ausch/tcha98}, \cite{hyt},
\cite{jer/ke}, \cite{hal/re1}, \cite{ausch/tcha01}; but only the last three are
dedicated to the case of a nonsmooth $\Omega \neq \R^d$. In \cite{ausch/tcha01}
the domain is a strong Lipschitz domain and the boundary conditions are either
pure Dirichlet or pure Neumann. Our result generalizes this to a large extent
and, at the same time, gives a new proof for these special cases, using more
'global' arguments. Since, in the case of a non-symmetric coefficient function
$\mu$, for the nonsmooth constellations described above no general condition is
known that assures $( -\nabla \cdot \mu \nabla + 1 )^{1/2} : W^{1,2}_D(\Omega)
\to L^2(\Omega)$ to be an isomorphism, this is supposed as one of our
assumptions. This serves then as our starting point to show the corresponding
isomorphism property of $(-\nabla \cdot \mu \nabla + 1 )^{1/2} :
W^{1,p}_D(\Omega) \to L^p(\Omega)$ for $p \in {]1,2[}$. For the case $d=1$ this
is already known, even for all $p \in {]1, \infty[}$ and more general
coefficient functions $\mu$, cf.\@ \cite{AMN97}. So we stick to the case $d \ge
2$.

While the isomorphism property is already interesting in itself, our original
motivation comes from applications: having the isomorphism $(-\nabla \cdot \mu
\nabla + 1)^{1/2} : W^{1,p}_D(\Omega) \to L^p(\Omega)$ at hand, the adjoint
isomorphism $\bigl( (-\nabla \cdot \mu \nabla + 1)^{1/2} \bigr)^* = (-\nabla
\cdot \mu^T \nabla + 1 )^{1/2} : L^q(\Omega) \to W^{-1,q}_D(\Omega)$ allows to
carry over substantial properties of the operators $-\nabla \cdot \mu \nabla$
on the $L^p$-scale to the scale of $W^{-1,q}_D$-spaces for $q \in
{[2,\infty[}$. In particular, this concerns the $H^\infty$-calculus and maximal
parabolic regularity, see Section~\ref{sec-Cons}, which in turn is a powerful
tool for the treatment of linear and nonlinear parabolic equations, see
e.g.\@ \cite{pruess} and \cite{hal/re0}.

The paper is organized as follows: after presenting some notation and general
assumptions in Section~\ref{sec-hyp}, in Section~\ref{sec-Sob} we introduce the
Sobolev scale $W^{1,p}_D(\Omega)$, $1 \le p \le \infty$, related to mixed
boundary conditions and point out some of their properties. In
Section~\ref{sec-Opera} we define properly the elliptic operator under
consideration and collect some known facts for it. The main result on the
isomorphism property for the square root of the elliptic operator is precisely
formulated in Section~\ref{sec-mainResult}. The following sections contain
preparatory material for the proof of the main result, which is finished at the
end of Section~\ref{sec-Proof}. Some of these results have their own interest,
such as Hardy's inequality for mixed boundary conditions that is proved in
Section~\ref{sec-Hardy} and the results on real and complex interpolation for
the spaces $W_D^{1,p}(\Omega)$, $1 \le p \le \infty$, from
Section~\ref{sec-Interpolation}, so we shortly want to comment on these.

Our proof of Hardy's inequality heavily rests on two things: first one uses an 
operator that extends functions from $W^{1,p}_D(\Omega)$ to
$W_0^{1,p}(\Omega_\bullet)$, where $ \Omega_\bullet$ is a domain containing
$\Omega$. Then one is in a situation where the deep results of Ancona
\cite{Anc}, Lewis \cite{lewis} and Wannebo \cite{Wannebo}, combined with
Lehrb\"ack's \cite{juha} ingenious characterization of $p$-fatness, may be
applied.

The proof of the interpolation results, as well as other steps in the proof of
the main result, are fundamentally based on an adapted Calder\'on-Zygmund
decomposition for Sobolev functions. Such a decomposition was first introduced
in \cite{auschmem} and has also succesfully been used in \cite{TheseBadr}, see
also \cite{Badr09}. We have to modify it, since the main point here is, that
the decomposition has to respect the boundary conditions. This is accomplished
by incorporating Hardy's inequality into the controlling maximal operator. This
result, which is at the heart of our considerations, is contained in
Section~\ref{sec-CZD}.

All these preparations, together with off-diagonal estimates for the semigroup
generated by our operator, cf.\@ Section~\ref{sec-OffD}, lead to the
proof of the main result in Section~\ref{sec-Proof}. Finally, in
Section~\ref{sec-Cons} we draw some consequences, as already sketched above.

After having finished this work we got to know of the paper \cite{brewst}.
There, among other deep things, Lemma \ref{l-extend} and the interpolation 
results of Section~\ref{sec-Interpolation} are also proved -- and this in an
even much broader setting than ours.

\subsection*{Acknowledgments}
In 2012, after we asked him a question, V.~Maz'ya proposed a proof of
Proposition~\ref{p-juha} that heavily relied on several deep results from his
book \cite{Mazya}. Actually there was an earlier reference in the literature
with a different approach that, provided a simple lemma is established, applies
directly. It was again V.~Maz'ya who drew our attention to the fact that
something like this lemma is needed. We warmly thank him for all that.

The authors also want to thank A.~Ancona, M.~Egert, P.~Koskela, and
W.~Tre\-bels for valuable discussions and hints on the topic.

Finally we thank the referees for many valuable hints.

%
%
\section{Notation and general assumptions} \label{sec-hyp}
%
%
%
\noindent
Throughout the paper  we will use $\mathrm{x}, \mathrm{y},\dots$ for vectors in
$\R^d$ and the symbol $B(\mathrm x,r)$ stands for the ball in $\R^d$ around
$\mathrm x$ with radius $r$. For $E,F \subseteq \R^d$ we denote by $\dd(E,F)$
the distance between $E$ and $F$, and if $E = \{\mathrm x\}$, then we write
$\dd(\mathrm x,F)$ or $\dd_F(\mathrm{x})$ instead.

Regarding our geometric setting, we suppose the following assumption throughout
this work.
\begin{assumption} \label{assu-general}
 \begin{enumerate}
 \item \label{assu-general:i} Let $d \ge 2$, let $\Omega \subseteq \R^d$ be
	a bounded domain and let $D$ be a closed subset of the boundary
	$\partial \Omega$ (to be understood as the Dirichlet boundary part).
	For every $\mathrm x \in \overline{\partial \Omega \setminus D}$ there
	exists an open neighbourhood $U_{\mathrm x}$ of $\mathrm x$ and a bi-Lipschitz map
	$\phi_{\mathrm x}$ from $U_{\mathrm x}$ onto the cube $K :=
	{]{-1},1[}^d$, such that the following three conditions are satisfied:
	\begin{align*}
	  \phi_{\mathrm x}(\mathrm x) &= 0, \nonumber \\
	  \phi_{\mathrm x}(U_{\mathrm x} \cap \Omega) &= \{ \mathrm x \in
		K : x_d < 0 \} =: K_-,
		\\
	  \phi_{\mathrm x}(U_{\mathrm x} \cap \partial \Omega) &= \{
		\mathrm x \in K : x_d = 0 \} =: \Sigma. \nonumber
	\end{align*}
 \item  We suppose that $D$ is either empty or
	satisfies the \emph{Ahlfors-David condition}:
	There are constants $c_0, c_1 > 0$ and $r_{AD} > 0$, such that for all
	$\mathrm x \in D$ and all $r \in {]0,r_{AD}]}$
	\begin{equation} \label{e-ahlf}
	  c_0 r^{d-1} \le \mathcal H_{d-1} (D \cap B(\mathrm x,r) ) \le c_1
		r^{d-1},
	\end{equation}
	where $\mathcal H_{d-1}$ denotes (here and in the sequel) the
	$(d-1)$-dimensional Hausdorff measure, defined by
	\[ \mathcal H_{d-1}(A) := \liminf_{\eps \to 0} \Bigl
		\{\sum_{j=1}^\infty \mathrm{diam}(A_j)^{d-1} : \ A_j \subseteq
		\R^d, \ \mathrm{diam}(A_j) \le \eps, \ A \subseteq
		\bigcup_{j=1}^\infty A_j \Bigr\}.
	\]
 \end{enumerate}
\end{assumption}

\begin{remark} \label{r-surfmeas}
 \begin{enumerate}
 \item Condition~\eqref{e-ahlf} means that $D$ is a $(d-1)$-set in the sense 
	of Jonsson/Wallin \cite[Ch.~II]{jons}.
 \item \label{r-surfmeas:ii} On the set $\partial \Omega \cap \bigl(
	\bigcup_{\mathrm x \in  \partial \Omega \setminus D} U_\mathrm x
	\bigr)$ the measure $\mathcal H_{d-1}$ equals the surface measure
	$\sigma$ which can be constructed via the bi-Lipschitzian charts
	$\phi_\mathrm x$ around these boundary points, compare
	\cite[Section~3.3.4~C]{ev/gar} or \cite[Section~3]{coerciv}. In
	particular, \eqref{e-ahlf} assures the property $\sigma \bigl( D \cap
	\bigl( \cup _{\mathrm x \in \partial \Omega \setminus D} U_\mathrm x
	\bigr) \bigr) > 0$.
\item We emphasize that the cases $D = \partial \Omega$ or $D =	\emptyset$ are
	not excluded.
 \end{enumerate}
\end{remark}


If $B$ is a closed operator on a Banach space $X$, then we denote by
$\mathrm{dom}_X(B)$ the domain of this operator. $\mathcal L(X,Y)$ denotes the
space of linear, continuous operators from $X$ into $Y$; if $X = Y$, then we
abbreviate $\mathcal L(X)$. Furthermore, we will write $\langle \cdot, \cdot
\rangle_{X'}$ for the pairing of elements of $X$ and the dual space $X'$ of
$X$.

Finally, the letters $c$ and $C$ denote generic constants that may change value
from occurrence to occurrence.

%
%
%
%
\section{Sobolev spaces related to boundary conditions} \label{sec-Sob}
%
%
%
%
\noindent
In this section we will introduce the Sobolev spaces related to mixed boundary
conditions and prove some results related to them that will be needed later.

If $\Upsilon$ is an open subset of $\R^d$ and $F$ a closed subset of
$\overline \Upsilon$, e.g.\@ the Dirichlet part $D$ of $\partial \Omega$, then
for $1 \le q < \infty$ we define $W^{1,q}_F(\Upsilon)$ as the completion
of
\begin{equation} \label{e-defC}
  C^\infty_F(\Upsilon) := \{ \psi|_\Upsilon : \psi \in C_0^\infty(\R^d),
	\; \supp(\psi) \cap F = \emptyset \}
\end{equation}
with respect to the norm $\psi \mapsto \bigl( \int_\Upsilon |\nabla \psi|^q +
|\psi|^q \; \d \mathrm x \bigr)^{1/q}$. For $1 < q < \infty$ the dual of
this space will be denoted by $W^{-1,q'}_F(\Upsilon)$ with $1/q + 1/q' = 1$.
Here, the dual is to be understood with respect to the extended $L^2$ scalar
product, or, in other words: $W^{-1,q'}_F(\Upsilon)$ is the space of continuous
antilinear forms on $W^{1,q}_F(\Upsilon)$.


Finally, we define the respective spaces for the case $q = \infty$. We set
$W^{1,\infty}_F(\Upsilon):= \mathrm{Lip}_{\infty, F}(\Upsilon)$ with
\begin{equation} \label{e-LipRaum}
 \mathrm{Lip}_{\infty, F}(\Upsilon) := \bigl\{ f|_\Upsilon : f \in
	(L^\infty \cap \mathrm{Lip})(\R^d), f|_F = 0 \bigr\} = \bigl\{ f \in
	(L^\infty \cap \mathrm{Lip})(\Upsilon), f|_F = 0 \bigr\}.
\end{equation}
The norm on this space is 
\[ \|f\|_{L^\infty(\Upsilon)} + \sup_{\mathrm{x}, \mathrm{y} \in \Upsilon,
	\mathrm{x} \ne \mathrm{y}}
	\frac{|f(\mathrm{x}) - f(\mathrm{y})|}{|\mathrm{x} - \mathrm{y}|}.
\]
The last equality in \eqref{e-LipRaum} is a consequence of the Whitney
extension theorem. We have $\mathrm{Lip}_{\infty, F}(\Upsilon) \subseteq
\bigl\{ f \in \mathcal{W}^{1,\infty}(\Upsilon) : f|_F = 0 \bigr\}$
($\mathcal{W}^{1,\infty}(\Upsilon)$ is defined using distributions) and the
converse holds iff $\Omega $ is uniformly locally convex by \cite[Theorem 7]{hajlasz3}.

In order to simplify notation, we drop the $\Omega$ in the notation of spaces,
if misunderstandings are not to be expected. Thus, function spaces without an
explicitely given domain are to be understood as function spaces on $\Omega$.

\begin{lemma}\label{inclusion}
 Let $\Upsilon \subseteq \R^d$ be a bounded domain and $F$ a (relatively)
 closed subset of $\partial \Upsilon$. Then $W^{1,\infty}_F(\Upsilon) \subseteq
 W^{1,q}_F(\Upsilon)$ for $1 \le q <\infty$.
\end{lemma}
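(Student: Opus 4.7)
The plan is to start from $f \in W^{1,\infty}_F(\Upsilon) = \mathrm{Lip}_{\infty,F}(\Upsilon)$ and construct, for any prescribed accuracy, an approximant in $C^\infty_F(\Upsilon)$ that is close to $f$ in the $W^{1,q}(\Upsilon)$-norm; this places $f$ in the completion $W^{1,q}_F(\Upsilon)$. By the very definition of $\mathrm{Lip}_{\infty,F}(\Upsilon)$, $f$ is the restriction to $\Upsilon$ of some $g \in (L^\infty \cap \mathrm{Lip})(\R^d)$ with $g|_F = 0$; multiplying $g$ by a compactly supported smooth cutoff equal to $1$ in a neighbourhood of $\overline{\Upsilon}$ (and hence of $F$, since $F \subseteq \partial\Upsilon$), I may assume additionally that $g$ has compact support in $\R^d$. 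The Lipschitz property together with $g|_F = 0$ then yields the pointwise estimate $|g(\mathrm{x})| \le L\,\dd_F(\mathrm{x})$ for some $L > 0$, which will be the crucial ingredient.

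Next, fix a smooth $\chi \colon \R \to [0,1]$ with $\chi \equiv 0$ on $(-\infty, 1/2]$ and $\chi \equiv 1$ on $[1,\infty)$, and set $g_n(\mathrm{x}) := g(\mathrm{x})\,\chi(n\,\dd_F(\mathrm{x}))$. Since $\dd_F$ is $1$-Lipschitz, each $g_n$ is Lipschitz with compact support and satisfies $\supp(g_n) \subseteq \{\dd_F \ge 1/(2n)\}$, so $\supp(g_n)$ has positive distance from $F$. I would then prove $g_n \to g$ in $W^{1,q}(\Upsilon)$. On $\Upsilon$ one has $\dd_F > 0$ (open set disjoint from the closed set $F$), so $g_n \to g$ pointwise and, by dominated convergence with majorant $|g| \in L^q(\Upsilon)$, also in $L^q(\Upsilon)$. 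Rademacher's theorem gives
\[
    \nabla g_n = \chi(n\,\dd_F)\,\nabla g + n\, g\, \chi'(n\,\dd_F)\,\nabla \dd_F \quad \text{a.e.\@ in } \R^d.
\]
The first summand converges to $\nabla g$ in $L^q(\Upsilon)$ by dominated convergence. For the second one, the bound $|g| \le L\,\dd_F \le L/n$ on the support $\{1/(2n) \le \dd_F \le 1/n\}$ of $\chi'(n\,\dd_F)$ exactly cancels the factor $n$, giving
\[
    \bigl| n\,g\,\chi'(n\,\dd_F)\,\nabla \dd_F \bigr| \le L\,\|\chi'\|_\infty\,\mathbf{1}_{\{1/(2n) \le \dd_F \le 1/n\}}.
\]
Since $\dd_F > 0$ on $\Upsilon$, these indicator sets decrease to $\emptyset$ within $\Upsilon$, so by monotone convergence this term vanishes in $L^q(\Upsilon)$.

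Finally, for each fixed $n$ the function $g_n$ lies in $W^{1,q}(\R^d)$ and its compact support has distance at least $1/(2n)$ from $F$; convolving with a standard mollifier of radius $\eps < 1/(4n)$ produces a function in $C^\infty_0(\R^d)$ whose support is still disjoint from $F$, and which approximates $g_n$ arbitrarily well in $W^{1,q}(\R^d)$. A diagonal selection then produces a sequence in $C^\infty_F(\Upsilon)$ (by restriction to $\Upsilon$) converging to $f$ in $W^{1,q}(\Upsilon)$, putting $f$ in the completion $W^{1,q}_F(\Upsilon)$. The only delicate step is the analysis of $\nabla g_n$: the main obstacle is the summand involving $\chi'$, where the prefactor $n$ threatens to produce unbounded $L^q$-norms; the cancellation $|g| \le L\,\dd_F$, which is precisely what the Lipschitz-vanishing-on-$F$ hypothesis buys us, defuses the issue, and the remainder of the argument is a routine cutoff-and-mollify procedure.
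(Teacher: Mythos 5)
Your argument is correct and follows essentially the same route as the paper's proof: truncate $g$ near $F$ by a cutoff depending on $\dd_F$, use the bound $|g|\le L\,\dd_F$ (coming from the Lipschitz property and $g|_F=0$) to tame the gradient of the cutoff, and finish by mollification; the only cosmetic difference is your smooth $\chi$ in place of the paper's piecewise linear $\alpha_n$. (One nitpick: the sets $\{1/(2n)\le \dd_F\le 1/n\}$ are not nested, so invoke dominated convergence, or note their containment in the decreasing family $\{\dd_F\le 1/n\}$, rather than monotone convergence.)
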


\begin{proof}
 Let $(\alpha_n)_n$ be the sequence of cut-off functions defined on $\R^+$ by
 \[ \alpha_n(t)=  \begin{cases}
	0, & \text{if } 0 \le t < 1/n, \\
	n t - 1, & \text{if } 1/n \leq t \leq 2/n, \\
	1, & \text{if } t > 2/n.
	\end{cases}
 \]
 Remark that for $t \ne 0$ the sequence $\alpha_{n}(t)$ tends to $1$ as $n \to
 \infty$. Furthermore, for all $t \ge 0$ we have $0 \le t \alpha_n'(t)\le 2$
 and the sequence $(t \alpha_n'(t))_n$ tends to $0$.

 For $\mathrm{x} \in \R^d$ we set $w_n(\mathrm{x}) :=
 \alpha_n(\dd(\mathrm{x},F))$. Then, by the above considerations, $w_n \to 1$ 
 almost everywhere as $n \to \infty$.
 The function $d(\cdot,F)$ is Lipschitzian with Lipschitz constant $1$, hence
 it belongs to $W^{1,\infty}_{\mathrm{loc}}(\R^d)$,
 cf.\@ \cite[Ch.~4.2.3 Thm.~5]{ev/gar}. Since $\alpha$ is piecewise smooth, the
 usual chain rule for weak differentiation (cf.\@ \cite[Ch.~7.4 Thm.~7.8]{Gil})
 applies, which gives
 \[  |\nabla w_n(\mathrm{x})| = \bigl| \alpha'_n(\dd(\mathrm{x}, F)) \bigr|
	|\nabla \dd(\mathrm{x}, F)| \le  \bigl| \alpha'_n(\dd(\mathrm{x}, F))|
 \]
 almost everywhere on $\R^d$. Thus $\dd(\mathrm{x}, F)
 | \nabla w_n(\mathrm{x})|$ is bounded and converges to $0$ almost everywhere
 as $n \to \infty$.

 Let $g\in W^{1,\infty}_F(\Upsilon)$, which we consider as defined on $\R^d$.
 Since $\Upsilon$ is bounded, we may assume that $g$ has compact support in
 some large ball $B$. Let $g_n := g w_n$. Then $g_n$ is compactly supported in
 $B$ and  in $\R^d \setminus F$. We claim that  $g_n \to g$ in $W^{1,q}(\R^d)$.
 Indeed, $g - g_n = g (1 - w_n)$ and, by the dominated convergence theorem,
 $g (1 - w_n) \to 0$ in $L^q(\R^d)$, since $w_n \to 1$.

 Now, for the gradient, we have
 \[ \nabla g_n - \nabla g = (w_n - 1) \nabla g + g \nabla w_n.
 \]
 Again by the dominated convergence theorem, the first term converges to $0$ in
 $L^q(\R^d)$.

 It remains to prove that  $\|g \nabla w_n\|_{L^q(\R^d)}$ converges to $0$. We
 have
 \begin{equation} \label{e-Lip}
  (g \nabla w_n)(\mathrm{x}) = 
\begin{cases}
0, \quad & \text {if } \mathrm{x} \in F,\\
\frac{g(\mathrm{x})}{\dd(\mathrm{x}, F)}
	\dd(\mathrm{x}, F) \nabla w_n(\mathrm{x})& \text{a.e. on } \R^d 
\setminus F.
\end{cases} 
 \end{equation}
 Since $g$ is Lipschitz continuous on the whole of $\R^d$ and satisfies $g = 0$
 on $F$, we find 
 \[ \sup_{\mathrm{x} \in \R^d} \Bigl| \frac{g(\mathrm{x})}{\dd(\mathrm{x}, F)}
	\Bigr| = \sup_{\mathrm{x} \in \R^d} \Bigl|
	\frac{g(\mathrm{x}) - g(\mathrm{x}_*)}{\mathrm{x} - \mathrm{x}_*}
	\Bigr| \le C,
 \]
 where $\mathrm{x}_* \in F$ denotes an element of $F$ that realizes the
 distance of $\mathrm{x}$ to $F$. So both factors on the right hand side in
 \eqref{e-Lip} are bounded and $\dd(\mathrm{x}, F) \nabla w_n(\mathrm{x})$ goes
 to $0$ almost everywhere as $n \to \infty$. Thus, since $g$ has compact
 support, the dominated convergence theorem yields $g \nabla w_n \to 0$ in
 $L^q(\R^d)$.

 Finally, it suffices to convolve this approximation with a smooth mollifying
 function that has small support to conclude $g \in W^{1,q}_{F}(\Upsilon)$.
\end{proof}

Next, we establish the following extension property for function spaces on
domains, satisfying just part \ref{assu-general:i} of
Assumption~\ref{assu-general}. This has been proved in \cite{tomjo} for
$q = 2$. For convenience of the reader we include a proof.
%
\begin{lemma} \label{l-extend}
 Let $\Omega$ and $D$ satisfy
 Assumption~\ref{assu-general}~\ref{assu-general:i}. Then there is a continuous
 extension operator $\mathfrak E$ which maps each space $W^{1,q}_D(\Omega)$
 continuously into $W^{1,q}_D(\R^d)$, $q \in [1,\infty]$. Moreover,
 $\mathfrak E$ maps $L^q(\Omega)$ continuously into $L^q(\R^d)$ for $q \in
 [1, \infty]$.
\end{lemma}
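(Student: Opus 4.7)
The plan is to construct $\mathfrak E$ by a localization argument, using reflection across the model hyperplane $\Sigma$ in each of the bi-Lipschitz charts from Assumption~\ref{assu-general}\ref{assu-general:i} and plain extension by zero away from $\overline{\partial\Omega\setminus D}$. First, I would use the compactness of $\overline{\partial\Omega\setminus D}$ to pick a finite subcover $U_1,\ldots,U_N$ of it with associated bi-Lipschitz charts $\phi_i\colon U_i\to K$. Any boundary point of $\Omega$ outside $\bigcup_{i\ge 1}U_i$ has a $\partial\Omega$-neighbourhood contained in $D$, so I can add an open set $U_0\subseteq\R^d$ with $\overline{U_0}\cap\overline{\partial\Omega\setminus D}=\emptyset$ such that $U_0,U_1,\ldots,U_N$ still cover $\overline\Omega$. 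Taking a smooth partition of unity $\{\zeta_i\}_{i=0}^N$ subordinate to this cover, the extension would be defined by
\[
  \mathfrak E u := E_0(\zeta_0 u)+\sum_{i=1}^N E_i(\zeta_i u),
\]
where $E_0$ is extension by zero from $\Omega$ to $\R^d$, and for $i\ge 1$ the map $E_i$ pulls its argument back to $K_-$ via $\phi_i$, reflects it across $\Sigma$ by $x_d\mapsto -x_d$, pushes the result forward by $\phi_i^{-1}$, and extends by zero to $\R^d$.

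The norm bounds on $\mathfrak E$ will follow from three standard facts: reflection across a hyperplane is bounded on $W^{1,q}(K_-)\to W^{1,q}(K)$ for every $q\in[1,\infty]$; bi-Lipschitz pullbacks are bounded on $W^{1,q}$ by the chain rule, uniformly in $q$; and the cutoffs enter only multiplicatively. The $E_0$-piece is well-posed because by construction $\supp\zeta_0\cap\partial\Omega\subseteq D$, and functions in $W^{1,q}_D(\Omega)$ vanish in trace on $D$ (via density of $C^\infty_D(\Omega)$), so extending $\zeta_0u$ by zero across this portion of $\partial\Omega$ gives a function in $W^{1,q}(\R^d)$ with the expected norm. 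Putting these together yields $\|\mathfrak E u\|_{W^{1,q}(\R^d)}\le C\|u\|_{W^{1,q}(\Omega)}$, and the analogous $L^q$ bound is immediate since reflection and extension by zero are trivially $L^q$-bounded.

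The point needing real care, and the main obstacle, is that $\mathfrak E u$ must lie in $W^{1,q}_D(\R^d)$, i.e.\@ in the completion of $C^\infty_D(\R^d)$, and not merely in $W^{1,q}(\R^d)$. My plan is to apply the construction first to $\varphi\in C^\infty_D(\Omega)$. The decisive geometric observation is that $\phi_i(D\cap U_i)\subseteq\Sigma$ lies on the reflection hyperplane and is therefore fixed pointwise by $x_d\mapsto -x_d$; combined with $\supp\varphi\cap D=\emptyset$, this forces the reflected function to have closed support disjoint from $\phi_i(D\cap U_i)$, so $\supp E_i(\zeta_i\varphi)$ is disjoint from $D$. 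The same disjointness for $E_0(\zeta_0\varphi)$ is immediate from the support choices of $\zeta_0$ and $\varphi$. Since $D$ is closed, $\mathfrak E\varphi$ can then be mollified with a bump of radius less than $\dist(\supp\mathfrak E\varphi,D)$ to produce smooth approximants in $C^\infty_D(\R^d)$, placing $\mathfrak E\varphi$ in $W^{1,q}_D(\R^d)$. By the uniform norm bound and density of $C^\infty_D(\Omega)$, $\mathfrak E$ extends continuously to all of $W^{1,q}_D(\Omega)$ for $1\le q<\infty$. For $q=\infty$ the same formula works directly: the operations involved preserve $L^\infty\cap\mathrm{Lip}$ (for extension by zero one uses that the relevant functions are compactly supported inside $U_i$ and hence vanish on the boundary of their support by continuity), and $\mathfrak E u$ visibly vanishes on $D$ because each summand does, hence $\mathfrak E u\in\mathrm{Lip}_{\infty,D}(\R^d)=W^{1,\infty}_D(\R^d)$. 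Without the pointwise-fixed-by-reflection property of $D$ in each chart, a reflection scheme could let the extended function acquire mass on $D$, and it is precisely this geometric compatibility that allows the construction to respect the boundary condition.
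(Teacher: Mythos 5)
Your proposal is correct and follows essentially the same route as the paper's proof: a finite subcover of $\overline{\partial\Omega\setminus D}$ with a subordinate partition of unity, extension by zero for the piece supported away from $\overline{\partial\Omega\setminus D}$, even reflection across $\Sigma$ in the bi-Lipschitz charts for the boundary pieces, and then mollification of $\mathfrak E\varphi$ for $\varphi\in C^\infty_D(\Omega)$ (whose extension has support at positive distance from $D$) followed by density to land in $W^{1,q}_D(\R^d)$. Your explicit observation that $\phi_i(D\cap U_i)\subseteq\Sigma$ is fixed pointwise by the reflection is a slightly more detailed justification of the support-disjointness that the paper states more briefly, but the argument is the same.
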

\begin{proof}
 For every $\mathrm x \in \overline{\partial \Omega \setminus D}$ let the set
 $U_\mathrm x$ be an open neighbourhood that satisfies the condition from
 Assumption~\ref{assu-general}~\ref{assu-general:i}. Let $U_{\mathrm x_1},
 \ldots, U_{\mathrm x_\ell}$ be a finite subcovering of
 $\overline{\partial \Omega \setminus D}$ and let $\eta \in C^\infty_0(\R^d)$
 be a function that is identically one in a neighbourhood of
 $\overline{\partial \Omega \setminus D}$ and has its support in $U :=
 \bigcup_{j=1}^\ell U_{\mathrm x_j}$.

 Assume $\psi \in C^\infty_D(\Omega)$; then we can write $\psi = \eta \psi +
 (1 - \eta) \psi$. By the definition of $C^\infty_D(\Omega)$ and $\eta$ it is
 clear that the support of $(1 - \eta) \psi$ is contained in $\Omega$, thus
 this function may be extended by $0$ to the whole space $\R^d$ -- while its
 $W^{1,q}$-norm is preserved.

 It remains to define the extension of the function $\eta \psi$, what we will
 do now. For this, let $\eta_1, \ldots, \eta_\ell$ be a partition of unity on
 $\supp(\eta)$, subordinated to the covering $U_{\mathrm x_1}, \ldots,
 U_{\mathrm x_\ell}$. Then we can write $\eta \psi = \sum_{r=1}^\ell \eta_r
 \eta \psi$ and have to define an extension for every function $\eta_r \eta
 \psi$. For doing so, we first transform the corresponding function under the
 corresponding mapping $\phi_{\mathrm x_r}$ from
 Assumption~\ref{assu-general}~\ref{assu-general:i} to
 $\widetilde{\eta_r \eta \psi} = (\eta_r \eta \psi) \circ
 \phi_{\mathrm x_r}^{-1}$ on the half cube $K_-$. Afterwards, by even
 reflection, one obtains a function $\widehat{\eta_r \eta \psi} \in W^{1,q}(K)$
 on the cube $K$. It is clear by construction that
 $\supp(\widehat{\eta_r \eta \psi})$ has a positive distance to $\partial K$.
 Transforming back, one ends up with a function $\underline{\eta_r \eta \psi}
 \in W^{1,q}(U_{\mathrm x_r})$ whose support has a positive distance to
 $\partial U_{\mathrm x_r}$. Thus, this function may also be extended by $0$ to
 the whole of $\R^d$, preserving again the $W^{1,q}$ norm.

 Lastly, one observes that all the mappings $W^{1,q}(U_{\mathrm x_r} \cap
 \Omega) \ni \eta_r \eta \psi \mapsto \widetilde{\eta_r \eta \psi} \in
 W^{1,q}(K_-)$, $W^{1,q}(K_-) \ni \widetilde{\eta_r \eta \psi} \mapsto
 \widehat{\eta_r \eta \psi} \in W^{1,q}(K)$ and $W^{1,q}(K) \ni
 \widehat{\eta_r \eta \psi} \mapsto \underline{\eta_r \eta \psi} \in
 W^{1,q}(U_{\mathrm x_r})$ are continuous. Thus, adding up, one arrives at an
 extension of $\psi$ whose $W^{1,q}(\R^d)$-norm may be estimated by $c
 \|\psi\|_{W^{1,q}(\Omega)}$ with $c$ independent from $\psi$. Hence, the
 mapping $\mathfrak E$, up to now defined on $C^\infty_D(\Omega)$, continuously
 and uniquely extends to a mapping from $W^{1,q}_D$ to $W^{1,q}(\R^d)$.

 It remains to show that the images in fact even are in $W^{1,q}_D(\R^d)$. For
 doing so, one first observes that, by construction of the extension operator,
 for any $\psi \in C^\infty_D(\Omega)$, the support of the extended function
 $\mathfrak E \psi$ has a positive distance to $D$ -- but $\mathfrak E \psi$
 need not be smooth. Clearly, one may convolve $\mathfrak E \psi$ suitably in
 order to obtain an appropriate approximation in the $W^{1,q}(\R^d)$-norm --
 maintaining a positive distance of the support to the set $D$. Thus,
 $\mathfrak E$ maps $C^\infty_D(\Omega)$ continuously into $W^{1,q}_D(\R^d)$,
 what is also true for its continuous extension to the whole space
 $W^{1,q}_D(\Omega)$.

 It is not hard to see that the operator $\mathfrak E$ extends to a continuous
 operator from $L^q(\Omega)$ to $L^q(\R^d)$, where $q \in [1,\infty]$.
\end{proof}

\begin{remark} \label{r-remain}
\begin{enumerate}
\item \label{r-remain:i} By construction, all extended functions $\mathfrak E
	f$ have their support in $\Omega \cup \bigcup_{j=1}^\ell
	U_{\mathrm x_j}$, and, hence, in a suitably large ball.
\item \label{r-remain:ii} Employing Lemma~\ref{l-extend} in conjunction with
	\ref{r-remain:i}, one can establish the corresponding Sobolev
	embeddings of $W^{1,p}_D(\Omega)$ into the appropriate $L^q$-spaces
	(compactness included) in a
	straightforward manner.
\item  When combining $\mathfrak E$ with a multiplication operator that is
	induced by a function $\eta_0 \in C_0^\infty(\R^d)$, $\eta_0 \equiv 1$
	on $\Omega$, one may achieve that the support of the extended functions
	shrinks to a set which is arbitrarily close to $\Omega$.
\item It is not hard to see that functions from $W^{1,p}_D(\Omega)$ admit a
	trace on the set $\partial \Omega \setminus D$, thanks to the
	bi-Lipschitz charts presumed in our general
	Assumption~\ref{assu-general}. Moreover, the Jonsson/Wallin results in
	\cite[Ch.~VII]{jons} show that the extended functions $\mathfrak E f$
	admit a trace on the set $D$. A much more delicate point is the
	existence of a trace on $D$ and the coincidence with the trace of the
	extended function. This question is deeply investigated in
	\cite[Ch.~5]{brewst}, cf.\@ Theorem~5.2 and Corollary~5.3, compare also
	\cite[Ch. VIII Prop. 2]{jons}.

	In the following these subtle considerations will not be needed.
\end{enumerate}
\end{remark}

\begin{remark} \label{r-equivnorms}
 The geometric setting of Assumption~\ref{assu-general} still allows for a
 Poincar\'e inequality for functions from $W^{1,p}_D$, as soon as $D \neq
 \emptyset$. This is proved in \cite[Thm.~3.5]{coerciv}, if $\Omega$ is a
 Lipschitz domain. In fact, the proof only needs that a part of $D$ admits
 positive boundary measure and this is guaranteed by
 Remark~\ref{r-surfmeas}~\ref{r-surfmeas:ii}.

 This Poincar\'e inequality entails that, whenever $D \neq \emptyset$, the
 norms given by $\| f \|_{W^{1,p}_D}$ and $\| \nabla f \|_{L^p}$ for $f \in
 W^{1,p}_D$ are equivalent. So, in this case, in all subsequent considerations
 one may freely replace the one by the other.
\end{remark}

\section{The divergence operator: Definition and elementary properties}
	\label{sec-Opera}

\noindent
We now turn to the definition of the elliptic divergence form operator that
will be investigated. Let us first introduce the ellipticity supposition on the
coefficients.
\begin{assumption} \label{assu-coeff}
 The coefficient function $\mu$ is a Lebesgue measurable, bounded function on
 $\Omega$ taking its values in the set of real, $d \times d$ matrices,
 satisfying for some $\mu_\bullet > 0$ the usual ellipticity condition
 \[ \xi^T \mu(\mathrm x) \xi \ge \mu_\bullet |\xi|^2, \qquad \text{for all }
	 	\xi \in \R^d \text{ and almost all } \mathrm x \in \Omega.
 \]
\end{assumption}
The operator $A : W^{1,2}_D \to W^{-1,2}_D$ is defined by
\[ \langle A\psi, \varphi\rangle_{W^{-1,2}_D} := \mathfrak t(\psi, \varphi) :=
	\int_\Omega \mu \nabla \psi \cdot \nabla \overline \varphi
	\; \dd \mathrm{x}, \quad \psi, \varphi \in W^{1,2}_D.
\]
Often we will write more suggestively $-\nabla \cdot \mu \nabla$ instead of
$A$.

The $L^2$ realization of $A$, i.e.\@ the maximal restriction of $A$ to the
space $L^2$, will be denoted by the same symbol $A$; clearly this is identical
with the operator that is induced by the sesquilinear form $\mathfrak t$. If
$B$ is a densely defined, closed operator on $L^2$, then by the $L^p$
realization of $B$ we mean its restriction to $L^p$ if $p > 2$ and the
$L^p$~closure of $B$ if $p \in {[1, 2 [}$. (For all operators we have in mind,
this $L^p$-closure exists.)

As a starting point of our considerations we assume that the square root of
our operator is well-behaved on $L^2$. 

\begin{assumption} \label{assu-coeffi}
 The operator $(-\nabla \cdot \mu \nabla + 1 )^{1/2} : W^{1,2}_D \to L^2$
 provides a  topological isomorphism; in other words: the domain of  $(-\nabla
 \cdot \mu \nabla + 1 )^{1/2}$ on $L^2$ is the form domain $W^{1,2}_D$.
\end{assumption}

\begin{remark} \label{r-valid}
 By a recent result in \cite{EHT13} the isomorphism property which is assumed
 in the above assumption is known in our context under the additional
 hypotheses that $\Omega$ is a $d$-set, i.e.\@ there is a constant $c > 0$,
 such that
 \[ \frac 1c r^d \le \mathcal H_d \bigl( \Omega \cap B(\mathrm x,r) \bigr) \le
	c r^d \qquad \text{for all } \mathrm x \in \Omega \text{ and } r \in
	[0,1],
 \]
 where $\mathcal H_d$ denotes the $d$-dimensional Hausdorff measure.
 Furthermore, some other remarkable special cases in this context are
 available:
 \begin{enumerate} 
  \item If this assumption is satisfied for a coefficient function $\mu$, then
	it is also true for the adjoint coefficient function,
	cf.\@ \cite[Thm.~8.2]{Ouh05}.
  \item Assumption~\ref{assu-coeffi} is always fulfilled if the coefficient
	function $\mu$ takes its values in the set of real \emph{symmetric}
	$d\times d$-matrices.
  \item In view of non-symmetric coefficient functions see \cite{mcintosh} and
	\cite{terelst}.
 \end{enumerate}
\end{remark}

Finally, we collect some facts on $- \nabla \cdot \mu \nabla$ as an operator on
the $L^2$ and on the $L^p$ scale.
%
\begin{proposition} \label{p-basicl2}
 Let $\Omega \subseteq \R^d$ be a domain and let $D \subseteq \partial \Omega$
 (relatively) closed.
 \begin{enumerate}
  \item \label{p-basicl2:i} The restriction of $-\nabla \cdot \mu \nabla$ to
	$L^2$ is a densely defined sectorial operator.
  \item The operator $\nabla \cdot \mu \nabla$ generates an analytic
	semigroup on $L^2$.
  \item \label{p-basicl2:iii}The form domain $W^{1,2}_D$ is invariant under
	multiplication with functions from $W^{1,q}$, if $q > d$.
 \end{enumerate}
\end{proposition}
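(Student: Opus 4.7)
My plan is to treat parts (i) and (ii) by classical form theory and part (iii) by combining the Leibniz rule with the embeddings supplied by Lemma~\ref{l-extend}, followed by an approximation argument that exploits the bi-Lipschitz charts near $\overline{\partial\Omega\setminus D}$.

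For (i) and (ii), I would verify that $\mathfrak{t}$ is a bounded, densely defined sesquilinear form on $W^{1,2}_D$: the bound $|\mathfrak{t}(\psi,\varphi)|\le\|\mu\|_\infty\|\nabla\psi\|_{L^2}\|\nabla\varphi\|_{L^2}$ is immediate, while Assumption~\ref{assu-coeff} gives
\[
\Re\mathfrak{t}(\psi,\psi)+\|\psi\|^2_{L^2}\ \ge\ \mu_\bullet\|\nabla\psi\|^2_{L^2}+\|\psi\|^2_{L^2}\ \ge\ c\,\|\psi\|^2_{W^{1,2}_D},
\]
so $\mathfrak{t}+\mathrm{id}$ is coercive. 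Density of $W^{1,2}_D$ in $L^2$ follows from $C_0^\infty(\Omega)\subseteq C^\infty_D(\Omega)$. The first representation theorem (e.g.\@ Kato, or \cite{Ouh05}) then produces an m-sectorial operator on $L^2$ that one identifies with the $L^2$-realization of $-\nabla\cdot\mu\nabla$, proving (i); since $-A$ is sectorial, (ii) is immediate from the standard generation theorem for analytic semigroups.

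For (iii), let $\eta\in W^{1,q}(\Omega)$ with $q>d$ and $\psi\in W^{1,2}_D(\Omega)$. I first establish the multiplicative estimate
\[
\|\eta\psi\|_{W^{1,2}(\Omega)}\ \le\ C\|\eta\|_{W^{1,q}(\Omega)}\|\psi\|_{W^{1,2}_D(\Omega)}.
\]
By Lemma~\ref{l-extend} and Remark~\ref{r-remain}~\ref{r-remain:ii} the Morrey embedding $W^{1,q}\hookrightarrow L^\infty$ and Sobolev embedding $W^{1,2}_D\hookrightarrow L^{2q/(q-2)}$ both hold (the latter because $q>d$ forces $2q/(q-2)<2d/(d-2)$ when $d>2$, with arbitrary finite exponents for $d=2$). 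The Leibniz rule, together with H\"older's inequality, then controls both $\eta\nabla\psi$ and $\psi\nabla\eta$ in $L^2$.

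To upgrade $\eta\psi\in W^{1,2}$ to $\eta\psi\in W^{1,2}_D$, I would use density: by the estimate above it suffices to treat $\psi\in C^\infty_D(\Omega)$. For such a $\psi$ one has $\delta:=\dist(\supp\psi,D)>0$, so a $\delta/2$-neighbourhood $V$ of $\supp\psi$ meets $\partial\Omega$ only within $\overline{\partial\Omega\setminus D}$, where bi-Lipschitz charts are available by Assumption~\ref{assu-general}~\ref{assu-general:i}. Localizing the extension procedure of Lemma~\ref{l-extend} to $V$ (with empty Dirichlet part) produces some $\tilde\eta\in W^{1,q}(\R^d)$ coinciding with $\eta$ on $V\cap\Omega$. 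Then $\tilde\eta\,\psi\in W^{1,2}(\R^d)$ is compactly supported at positive distance from $D$, and mollification with a kernel of radius less than $\delta/4$ yields a $C^\infty$-approximation whose supports stay disjoint from $D$. The restrictions to $\Omega$ belong to $C^\infty_D(\Omega)$ and converge to $\eta\psi$ in $W^{1,2}(\Omega)$, giving $\eta\psi\in W^{1,2}_D(\Omega)$.

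I expect the genuine obstacle to lie in the last step: since the geometry of $\partial\Omega$ near $D$ may be very rough, one cannot extend $\eta$ globally across $D$. The remedy is to push all extension arguments into the Lipschitz safe region $V$ that $\supp\psi$ carves out; once this localization is made, the argument reduces to standard facts.
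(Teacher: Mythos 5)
Your proposal is correct. Parts (i) and (ii) are essentially the paper's argument: boundedness, closedness and sectoriality of the numerical range of $\mathfrak t$, then the representation theorem from \cite[Ch.~VI.2.1]{kato} and the generation theorem \cite[Ch.~V.3.2]{kato}. For (iii) you prove the same key fact as the paper, namely the continuity of $W^{1,2}_D\times W^{1,q}\ni(\psi,\eta)\mapsto\eta\psi\in W^{1,2}$ via Morrey, Sobolev embedding and H\"older, but you then diverge in the approximation step. The paper approximates \emph{both} factors by smooth functions ($C^\infty_D(\Omega)$ is dense in $W^{1,2}_D$, $C^\infty(\Omega)$ in $W^{1,q}$), observes that a product of such functions again lies in $C^\infty_D(\Omega)\subseteq W^{1,2}_D$, and concludes from the closedness of $W^{1,2}_D$ in $W^{1,2}$. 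You instead fix $\psi\in C^\infty_D(\Omega)$, keep $\eta$ rough, extend $\eta$ locally across the Lipschitz part of the boundary near $\supp\psi$ and mollify. The paper's route is shorter; yours is more careful on exactly the point the paper glosses over, since for $v$ merely smooth \emph{inside} $\Omega$ (which is all Meyers--Serrin gives) the membership of $uv$ in $C^\infty_D(\Omega)$ -- a space of restrictions of globally smooth functions -- is not literally obvious and in effect requires the same localization-and-mollification near $\overline{\partial\Omega\setminus D}$ that you spell out. Note also that both arguments tacitly use the standing geometric Assumption~\ref{assu-general}~\ref{assu-general:i} for the embeddings, even though the proposition is formally stated for an arbitrary domain; you make this dependence explicit by invoking Lemma~\ref{l-extend}, which is fine.
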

%
\begin{proof}
 \begin{enumerate}
  \item It is not hard to see that the form $\mathfrak t$ is closed and its
	numerical range lies in the sector $\{z \in \C: |\Im z| \le
	\frac{\|\mu\|_{L^\infty}}{\mu_\bullet} \Re z \}$. Thus, the assertion
	follows from a classical representation theorem for forms, see
	\cite[Ch.~VI.2.1]{kato}.
  \item This follows from \ref{p-basicl2:i} and \cite[Ch.~V.3.2]{kato}.
  \item First, for $u \in C^\infty_D(\Omega)$ and $v \in C^\infty(\Omega)$ the
	product $u v$ is obviously in $C^\infty_D(\Omega) \subseteq W^{1,2}_D$.
	But, by definition of $W^{1,2}_D$, the set $C^\infty_D(\Omega)$ (see
	\eqref{e-defC}) is dense in $W^{1,2}_D$ and $C^\infty(\Omega)$ is dense
	in $W^{1,q}$. Thus, the assertion is implied by the continuity of the
	mapping
	\[ W^{1,2}_D \times W^{1,q} \ni (u,v) \mapsto uv \in W^{1,2},
	\]
	because $W^{1,2}_D$ is closed in $W^{1,2}$.
	\qedhere
 \end{enumerate}
\end{proof}
%

%
\begin{proposition} \label{t-gausss}
Let $\Omega$ and $D$ satisfy Assumption~\ref{assu-general}~\ref{assu-general:i}.
 Then the semigroup generated by $\nabla \cdot \mu \nabla$ in
$L^2$ satisfies upper Gaussian estimates, precisely:
\[ (\e^{t \nabla \cdot \mu \nabla} f)(\mathrm x) = \int_\Omega
	K_t(\mathrm x, \mathrm y) f(\mathrm y) \; \dd \mathrm y, \quad
	\text{ for a.a. } \mathrm x \in \Omega, \; f \in L^2,
\]
for some measurable function $K_t : \Omega \times \Omega \to \R_+$, and for all
$\epsilon > 0$ there exist constants $C, c > 0$,
such that
\begin{equation}\label{5.5}
  0 \leq K_t(\mathrm x, \mathrm y) \le \frac{C}{t^{d/2}} \;
        \e^{- c \frac{|\mathrm x - \mathrm y|^2}{t}} \e^{\epsilon t}, \quad t
	> 0, \; a.a. \; \mathrm x, \mathrm y
        \in \Omega.
\end{equation}
\end{proposition}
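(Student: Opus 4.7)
The plan is to apply Davies' exponential perturbation technique. First, combining the classical Nash inequality on $\R^d$ with the extension operator $\mathfrak{E}$ from Lemma~\ref{l-extend}, we obtain the Nash-type inequality
\[ \|u\|_{L^2}^{2+4/d} \leq C\bigl(\|\nabla u\|_{L^2}^2 + \|u\|_{L^2}^2\bigr)\|u\|_{L^1}^{4/d} \qquad (u \in W^{1,2}_D). \]
Positivity of the semigroup and $L^\infty$-contractivity (up to the shift by $\epsilon$) follow from the Beurling--Deny criteria: since $\mu$ is real and $W^{1,2}_D$ is stable under the truncation $u \mapsto (u \vee 0) \wedge 1$ (by approximation on $C^\infty_D(\Omega)$), the Markovian conditions are verified, and the semigroup extends consistently to all $L^p$, $p \in [1,\infty]$.

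For the Gaussian weight, fix a bounded Lipschitz function $\phi : \R^d \to \R$ with $\|\nabla\phi\|_\infty \leq 1$, and for $\rho \in \R$ consider the twisted form
\[ \mathfrak{t}_\rho(u,v) := \mathfrak{t}(\e^{-\rho\phi}u, \e^{\rho\phi}v), \qquad u,v \in W^{1,2}_D. \]
Multiplication by $\e^{\pm\rho\phi}$ preserves $W^{1,2}_D$ by Proposition~\ref{p-basicl2}\,(iii), since $\e^{\pm\rho\phi} \in W^{1,\infty}(\Omega) \subseteq W^{1,q}(\Omega)$ for any $q > d$ on the bounded domain $\Omega$. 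A short computation followed by Young's inequality yields
\[ \Re \mathfrak{t}_\rho(u,u) \geq \tfrac{\mu_\bullet}{2}\|\nabla u\|_{L^2}^2 - C\rho^2\|u\|_{L^2}^2. \]
In the non-symmetric case the cross term $\rho\int(\mu - \mu^T)\nabla\phi\cdot\overline{u}\,\nabla u$ is handled by splitting $\mu$ into its symmetric and antisymmetric parts; the symmetric contribution to the cross terms is purely imaginary, and the antisymmetric contribution is absorbed by Young's inequality into the two terms above.

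Feeding the coercivity of $\mathfrak{t}_\rho$ together with the Nash inequality into the standard ODE argument applied to $t \mapsto \|\e^{-tA_\rho}f\|_{L^2}^2$ produces the ultracontractive estimate
\[ \|\e^{-tA_\rho}\|_{L^1 \to L^\infty} \leq \frac{C}{t^{d/2}}\, \e^{C\rho^2 t + \epsilon t}, \]
where $A_\rho$ denotes the operator associated with $\mathfrak{t}_\rho$. Since $\e^{-tA_\rho} = M_\rho\,\e^{-tA}\,M_{-\rho}$ as operators on $L^2$, this translates into the pointwise kernel bound
\[ K_t(\mathrm{x},\mathrm{y}) \leq \frac{C}{t^{d/2}}\exp\bigl(C\rho^2 t + \epsilon t - \rho(\phi(\mathrm{x}) - \phi(\mathrm{y}))\bigr). \]
Choosing $\phi(\mathrm{z}) := |\mathrm{z} - \mathrm{y}|$, which is $1$-Lipschitz and bounded on the bounded set $\Omega$, and optimizing over $\rho$ at $\rho = |\mathrm{x}-\mathrm{y}|/(2Ct)$ produces the Gaussian factor in \eqref{5.5}. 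The main delicate point is keeping the coercivity constants in the estimate for $\Re\mathfrak{t}_\rho$ uniform in $\rho$ and $\phi$ so that the ODE argument gives a bound depending on $\rho$ only through $\rho^2$; once this is secured, the Davies machinery yields \eqref{5.5} essentially automatically.
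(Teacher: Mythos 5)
Your argument is correct and is essentially the proof the paper points to: the paper itself only cites \cite{tomjo}, \cite{arel} and \cite[Thm.~6.10]{Ouh05}, and those references establish the bound exactly by your route — a Nash inequality obtained from the extension operator of Lemma~\ref{l-extend}, the Beurling--Deny/submarkovian properties of the form, ultracontractivity of the Davies-twisted semigroups with growth $\e^{C\rho^2 t}$, and optimization in $\rho$. The only points you gloss over (uniform $L^1$-control of the twisted semigroup needed in the Nash iteration, and passing from a fixed exponential growth rate to an arbitrary $\epsilon>0$ via $L^\infty$-contractivity for large times) are the standard technical steps carried out in those references.
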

%
\begin{proof}
 A proof is given in \cite{tomjo} -- heavily resting on \cite{arel}, compare
 also \cite[Thm.~6.10]{Ouh05}.
\end{proof}
\begin{proposition} \label{p-semigr}
Let $\Omega$ and $D$ satisfy Assumption~\ref{assu-general}~\ref{assu-general:i}. 
 \begin{enumerate} 
  \item For every $p \in [1,\infty]$, the operator $\nabla \cdot \mu \nabla$
	generates a semigroup of contractions on $L^p$.
  \item \label{p-semigr:ii} For all $q \in {]1, \infty[}$ the operator
	$-\nabla \cdot \mu \nabla + 1$ admits a bounded $H^\infty$-calculus on
	$L^q$ with $H^\infty$-angle $\arctan
	\frac{\|\mu\|_{L^\infty}}{\mu_\bullet}$. In particular, it admits
	bounded imaginary powers.
 \end{enumerate}
\end{proposition}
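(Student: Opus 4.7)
The plan is to treat the two assertions separately. For (i), the natural tool is Ouhabaz's form-theoretic criterion for invariance of closed convex sets under the semigroup generated by a sectorial form on a Hilbert space. I would first check that $W^{1,2}_D$ is stable under the normal contractions $u\mapsto u^+$ and $u\mapsto u\wedge 1$: both preserve the vanishing trace on $D$ (checked on the dense subclass $C^\infty_D(\Omega)$ and passed to limits via the standard chain-rule estimates in $W^{1,2}$), so the chain rule keeps the result in $W^{1,2}_D$. The pointwise identities $\nabla u^+\cdot\nabla u^-=0$ and $\nabla(u\wedge 1)\cdot\nabla(u-1)^+=0$ a.e., together with the ellipticity of $\mu$, immediately give the Beurling--Deny--Ouhabaz criteria for positivity preservation and $L^\infty$-contractivity of the semigroup $(\e^{t\nabla\cdot\mu\nabla})_{t\ge 0}$ on $L^2$, cf.~\cite[Thms.~2.15, 2.16 and Ch.~4]{Ouh05}. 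Duality yields the contraction on $L^1$, and Riesz--Thorin interpolation completes the full range $p\in[1,\infty]$.

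For (ii), I would first establish the $L^2$-case. From Assumption~\ref{assu-coeff} one has, for every $\psi\in W^{1,2}_D$,
$|\Im\mathfrak t(\psi,\psi)|\le(\|\mu\|_{L^\infty}/\mu_\bullet)\,\Re\mathfrak t(\psi,\psi)$, so the numerical range of $\mathfrak t$ lies in the closed sector of half-angle $\theta_0:=\arctan(\|\mu\|_{L^\infty}/\mu_\bullet)$; after the shift by $1$ the same holds for $-\nabla\cdot\mu\nabla+1$. McIntosh's theorem (see \cite{mcintosh}) then asserts that any m-accretive operator on a Hilbert space whose numerical range sits in such a sector admits a bounded $H^\infty$-calculus with $H^\infty$-angle at most $\theta_0$. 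This settles the $L^2$ part of (ii).

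To transfer the calculus from $L^2$ to $L^q$ for $q\in(1,\infty)$, I would invoke the Gaussian heat-kernel bounds of Proposition~\ref{t-gausss} combined with the extrapolation theorem for $H^\infty$-calculus under Gaussian kernel estimates, due to Duong--Robinson and Duong--McIntosh--Yan and sharpened so as to preserve the Hilbert-space angle by Blunck--Kunstmann and Kunstmann--Weis (a convenient account is \cite[Ch.~7]{Ouh05}). This yields the bounded $H^\infty$-calculus on $L^q$ at the same angle $\theta_0$, and the bounded imaginary powers then follow at once, since $z\mapsto z^{is}$ lies in $H^\infty$ of every proper subsector. The most delicate point in the whole argument is this preservation of the precise angle $\theta_0$ under extrapolation: the raw Duong--Robinson statement only yields \emph{some} bounded $H^\infty$-angle strictly below $\pi$, so the more refined version that tracks the Hilbert-space angle is essential, while the remaining steps (invariance checks, duality plus interpolation to get (i), and McIntosh on $L^2$) are standard once the ellipticity identity for $\mathfrak t$ is in hand.
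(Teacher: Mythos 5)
Your argument is correct and follows essentially the same route as the paper: for (i), Ouhabaz's form criteria for positivity and $L^\infty$-contractivity followed by duality and interpolation; for (ii), the numerical-range bound giving the bounded $H^\infty$-calculus on $L^2$ at angle $\arctan(\|\mu\|_{L^\infty}/\mu_\bullet)$, transferred to $L^q$ by means of the Gaussian bounds of Proposition~\ref{t-gausss}. The one remark worth making is that your worry about losing the angle under extrapolation is unfounded for the reference the paper actually uses: Duong--Robinson's Theorem~3.1 already yields a bounded $H^\infty(\Sigma_\nu)$-calculus on $L^p$ for every $\nu$ strictly larger than the $L^2$-angle, so no sharper Blunck--Kunstmann or Kunstmann--Weis variant is required.
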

\begin{proof}
 \begin{enumerate}
  \item The operator $\nabla \cdot \mu \nabla$ generates a semigroup of
	contractions on $L^2$ (see \cite[Thm~1.54]{Ouh05}) as well as on
	$L^\infty$ (see \cite[Ch.~4.3.1]{Ouh05}). By interpolation this carries
	over to every $L^q$ with $q \in {]2,\infty[}$ and, by duality, to $q
	\in [1,2]$.
  \item Since the numerical range of $-\nabla \cdot \mu \nabla$ is contained in
	the sector $\{z \in \C : |\Im z| \le
	\frac{\|\mu\|_{L^\infty}}{\mu_\bullet} \Re z \}$, the assertion holds
	true for $q = 2$, see \cite[Cor.~7.1.17]{Haa06}. Secondly, the
	semigroup generated by $\nabla \cdot \mu \nabla - 1$ obeys the Gaussian
	estimate~\eqref{5.5} with $\epsilon = 0$. Thus, the first assertion
	follows from \cite[Theorem~3.1]{duong/robin}. The second claim is a
	consequence of the first, see \cite[Section~2.4]{DHP03}.
  \qedhere
 \end{enumerate}
\end{proof}


%
%
\section{The main result: the isomorphism property of the square root}
\label{sec-mainResult}
%
%
%
%
\noindent
We can now formulate our main goal, that is to prove that the mapping
\[ ( A + 1)^{1/2} = ( -\nabla \cdot \mu \nabla+1 )^{1/2} : W^{1,q}_D \to L^q
\]
is a topological isomorphism for $q \in \left] 1, 2 \right[$. We abbreviate
$-\nabla \cdot \mu \nabla + 1$ by $A_0$ throughout the rest of this work.

More precisely, we want to show the following main result of this
paper.
%
\begin{theorem} \label{t-mainsect}
 Under Assumptions~\ref{assu-general}, \ref{assu-coeff} and \ref{assu-coeffi}
 the following holds true:
 \begin{enumerate}
  \item \label{t-mainsect:i} For every $q \in {]1,2]}$ the operator
	$A_0^{-1/2}$ is a continuous operator from $L^q$ into $W^{1,q}_D$.
	Hence, its adjoint continuously maps $W^{-1,q}_D$ into $L^q$ for any $q
	\in {[2,\infty[}$.
\item \label{t-mainsect:ii} Moreover, if $q \in {]1,2]}$, then $A_0^{1/2}$ maps 
        $W^{1,q}_D$ continuously into $L^q$. Hence, its adjoint continuously 
        maps $L^q$ into $W^{-1,q}_D$ for any $q \in {[2,\infty[}$.
\end{enumerate}
\end{theorem}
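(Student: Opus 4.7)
The plan is to split Theorem~\ref{t-mainsect} into two primal estimates at exponents $q \in {]1,2[}$ and to pass to the dual statements afterwards. For part~\ref{t-mainsect:i} the primal estimate is
\[
 \|\nabla A_0^{-1/2} f\|_{L^q} \lesssim \|f\|_{L^q},
\]
the accompanying $L^q$-boundedness of $A_0^{-1/2}$ itself being immediate from the $H^\infty$-calculus of Proposition~\ref{p-semigr}~\ref{p-semigr:ii}; for part~\ref{t-mainsect:ii} the primal estimate is $\|A_0^{1/2} u\|_{L^q} \lesssim \|u\|_{W^{1,q}_D}$. Once these two bounds are established for every admissible coefficient $\mu$, the ${[2,\infty[}$-range continuity of the adjoints follows by taking adjoints in the extended $L^2$-pairing and observing that Assumption~\ref{assu-coeffi} is stable under the transposition $\mu \mapsto \mu^T$ (see Remark~\ref{r-valid}).

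To prove each primal bound I intend to follow the Auscher--Tchamitchian scheme from \cite{auschmem} and \cite{ausch/tcha01}, adapted to mixed boundary conditions. The starting point is the subordination identity
\[
 \nabla A_0^{-1/2} f = \tfrac{1}{\sqrt{\pi}} \int_0^\infty \nabla \e^{-t A_0} f\, \tfrac{\dd t}{\sqrt t},\qquad A_0^{1/2} u = \tfrac{1}{\sqrt{\pi}} \int_0^\infty A_0 \e^{-t A_0} u\, \tfrac{\dd t}{\sqrt t},
\]
which converges in the $L^2$-sense thanks to Proposition~\ref{p-semigr}. I would then prove that each of the two operators is of weak type $(p_0,p_0)$ for some $p_0 \in {]1,2[}$ arbitrarily close to~$1$. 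Marcinkiewicz interpolation against the $L^2$-bound coming from Assumption~\ref{assu-coeffi} then produces the strong-type $L^q$-bound for every $q \in {]p_0,2]}$, and letting $p_0 \downarrow 1$ covers the full range. The weak-type bound is obtained by applying the boundary-adapted Calder\'on--Zygmund decomposition of Section~\ref{sec-CZD} to the test object (to $f$ in part~\ref{t-mainsect:i}, to $u$ and $\nabla u$ in part~\ref{t-mainsect:ii}) at level $\alpha$: this yields a good part $g$ with $\|g\|_{L^\infty}\lesssim \alpha$ (or a Sobolev analogue thereof) plus atoms $b_i \in W^{1,p_0}_D$ supported in Whitney-type cubes $Q_i$ of side $r_i$, with $\sum_i \|b_i\|_{L^{p_0}}^{p_0}$ and $\sum_i \|\nabla b_i\|_{L^{p_0}}^{p_0}$ controlled by $\alpha^{p_0} |\{\mathcal{M} f > \alpha\}|$. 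The bad part is then handled by cutting the time integral at $t = r_i^2$: for $t \lesssim r_i^2$ one exploits the $L^2$-theory and the bounded overlap of the $Q_i$; for $t \gtrsim r_i^2$ one uses the off-diagonal decay of $\e^{-tA_0}$, $\sqrt t\,\nabla\e^{-tA_0}$ and $t A_0 \e^{-tA_0}$ from Section~\ref{sec-OffD}, which beats the $L^{p_0}$-mass of $b_i$ once one leaves a fixed dilate of $Q_i$.

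The principal obstacle is ensuring that the atoms $b_i$ genuinely lie in $W^{1,p_0}_D$, so that the semigroup legitimately acts on them and the integrations by parts feeding the off-diagonal estimates remain valid. For cubes $Q_i$ that touch the Dirichlet set $D$ one cannot simply subtract a constant, as is customary in the unconstrained setting; one must instead subtract a representative that still vanishes on $D \cap \overline{Q_i}$ and absorb the resulting correction into the stopping condition. This is precisely the reason the controlling maximal operator of Section~\ref{sec-CZD} is built out of the Hardy quotient $|u|/\dd(\cdot,D)$ in addition to the usual gradient maximal function; the Hardy inequality of Section~\ref{sec-Hardy} is what allows that maximal operator to be dominated in $L^{p_0}$ by $\|\nabla u\|_{L^{p_0}}$. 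Once this geometric piece is in place, the remainder of the argument is a matter of dyadic bookkeeping guided by the Gaussian bound~\eqref{5.5} and its off-diagonal consequences.
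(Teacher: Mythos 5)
Your overall architecture --- the boundary-adapted Calder\'on--Zygmund decomposition with the Hardy quotient $|u|/\dd(\cdot,D)$ built into the controlling maximal function, off-diagonal estimates for the bad part, and a weak-type bound followed by interpolation --- is indeed the paper's. But your treatment of the bad part has the two time regimes the wrong way round, and this is not cosmetic. In your parametrisation the off-diagonal bound from Section~\ref{sec-OffD} decays like $\e^{-c\,\dd(E,F)^2/t}$, so it is effective only for $t \lesssim \dd(E,F)^2$; evaluated outside a fixed dilate of $Q_i$ (where $\dd(E,F)\sim r_i$) and for $t \gtrsim r_i^2$, the exponential is $O(1)$ and gives nothing, so it cannot ``beat the $L^{p_0}$-mass of $b_i$'' there. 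The paper uses off-diagonal decay precisely for the \emph{small}-time piece of the integral, off the exceptional set $\bigcup_\iota 4Q_\iota$, while the \emph{large}-time piece, rewritten as $r_j^{-1}\psi(r_j^2A_0)b_j$ with $\psi(z)=\int_1^\infty z\e^{-t^2z}\,\dd t$, is handled by a genuinely different mechanism: an almost-orthogonality (vertical square-function) estimate coming from the $\cR$-bounded $H^\infty$-calculus (Lemma~\ref{RHinftylemma}), combined with the Sobolev embedding $W^{1,1}_D\hookrightarrow L^{d/(d-1)}$ and a Tchebychev argument at the exponent $d/(d-1)$. This ingredient is absent from your plan, and ``$L^2$-theory and bounded overlap of the $Q_i$'' will not substitute for it, since at scale $t\gg r_i^2$ the relevant supports overlap massively.

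Two further points. First, your concluding ``Marcinkiewicz interpolation'' is between $A_0^{1/2}:W^{1,p}_D\to L^{p,\infty}$ and $A_0^{1/2}:W^{1,2}_D\to L^2$, so the domain spaces form a Sobolev couple; to interpolate you must know that $(W^{1,p}_D,W^{1,2}_D)_{\theta,q}=W^{1,q}_D$, which is Theorem~\ref{interp} --- a substantive result of the paper, itself proved from the same Calder\'on--Zygmund decomposition --- and not an instance of classical Marcinkiewicz interpolation. Second, part~\ref{t-mainsect:i} needs none of this machinery: the Riesz transform bound $\|\nabla A_0^{-1/2}f\|_{L^q}\le C\|f\|_{L^q}$ for $1<q\le 2$ is a known consequence of the Gaussian bounds (the paper cites \cite[Thm.~7.26]{Ouh05}), and the only thing left to check is that the image lies in $W^{1,q}_D$ rather than merely $W^{1,q}$, which follows by density from Assumption~\ref{assu-coeffi}; your proposal both over-engineers this half and omits the boundary-behaviour check.
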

%
We can immediately give the proof of \ref{t-mainsect:i}, i.e.\@ the continuity
of the operator $A_0^{-1/2} : L^q \to W^{1,q}_D$. We observe that this follows,
whenever
\begin{itemize}
\item[1.] The Riesz transform $\nabla A_0^{- 1/2}$ is a bounded operator on
	$L^q$, and, additionally,
\item[2.] $A_0^{-1/2}$ maps $L^q$ into $W^{1,q}_D$.
\end{itemize}
The first item is proved in \cite[Thm.~7.26]{Ouh05}, compare also \cite{DM99}.
It remains to show 2. The first point makes clear that $A_0^{-1/2}$ maps $L^q$
continuously into $W^{1,q}$, thus one only has to verify the correct boundary
behavior of the images. If $f \in L^2 \hookrightarrow L^q$, then one has
$A_0^{-1/2}f \in W^{1,2}_D \hookrightarrow W^{1,q}_D$, due to
Assumption~\ref{assu-coeffi}. Thus, the assertion follows from 1. and the
density of $L^2$ in $L^q$. 
%
\begin{remark} \label{r-nottrue}
Theorem~\ref{t-mainsect}~\ref{t-mainsect:i} is not true for other values of
$q$ in general, see \cite[Ch.~4]{auschmem} for a further discussion.
\end{remark}
%

The hard work is to prove the second part, that is the continuity of
$A_0^{1/2} : W^{1,q}_D \to L^q$. The proof is inspired by \cite{auschmem},
where this is shown in the case $\Omega = \R^d$, and will be developed in the
following five sections.

%
%
%
%

\section{Hardy's inequality} \label{sec-Hardy}
%
%
%
%
\noindent
A major tool in our considerations is an inequality of Hardy type for functions
in $W^{1,p}_D$, so functions that vanish only on the part $D$ of the boundary.

We recall that, for a set $F \subseteq \R^d$, the symbol $\d_F$ denotes the
function on $\R^d$ that measures the distance to $F$. The result we want to
show in this section, is the following.
%
\begin{theorem} \label{t-hardy}
 Under Assumption~\ref{assu-general}, for every
 $p \in {]1,\infty[}$ there is a constant $c_p$, such that
 \[ \int_{\Omega} \left | \frac{f}{\d_D} \right |^p
	\, \dd \mathrm{x} \; \leq \; c_p \int_{\Omega} |\nabla f|^p
	\; \dd \mathrm{x}
 \]
 holds for all $f \in W^{1,p}_D$.
\end{theorem}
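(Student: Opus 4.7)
The plan is to reduce the mixed-boundary Hardy inequality to the classical Hardy inequality for functions vanishing on the complement of a $p$-fat set. I extend $f$ across the Neumann part $\partial\Omega\setminus D$ so that only the boundary values on $D$ remain relevant, and then invoke a Hardy-type inequality of Ancona/Lewis/Wannebo combined with Lehrb\"ack's characterization of $p$-fatness.

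First, I apply the extension operator $\mathfrak E$ from Lemma~\ref{l-extend} to $f$, obtaining $\tilde f := \mathfrak E f \in W^{1,p}_D(\R^d)$ with $\|\tilde f\|_{W^{1,p}(\R^d)} \le C \|f\|_{W^{1,p}_D(\Omega)}$. By Remark~\ref{r-remain}~\ref{r-remain:i} the support of $\tilde f$ lies in some fixed open ball $B^*$. Setting $\Omega_\bullet := B^* \setminus D$, a density argument (localising approximants to $B^*$ via a fixed cut-off) allows me to view $\tilde f$ as an element of $W^{1,p}_0(\Omega_\bullet)$: by construction $\tilde f$ is a $W^{1,p}(\R^d)$-limit of smooth functions whose supports are bounded away from $D$, and truncation produces an approximating sequence in $C_0^\infty(\Omega_\bullet)$.

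Next I apply the classical Hardy inequality on $\Omega_\bullet$: for any closed uniformly $p$-fat set $E \subseteq \partial U$ with $U$ open bounded, every $u \in W^{1,p}_0(U)$ satisfies
\[
   \int_U \frac{|u|^p}{\d_E^p}\,\dd\mathrm x \; \le \; c_p \int_U |\nabla u|^p\,\dd\mathrm x.
\]
To apply this with $U = \Omega_\bullet$ and $E = D$, I invoke Lehrb\"ack's criterion: an Ahlfors-David $s$-set is uniformly $p$-fat whenever $s > d - p$. Since $D$ is a $(d-1)$-set by Assumption~\ref{assu-general} and $d-1 > d-p$ precisely for $p > 1$, this covers the whole range ${]1,\infty[}$. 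Restricting the resulting inequality to $\Omega \subseteq \Omega_\bullet$ and using continuity of $\mathfrak E$, combined with the Poincar\'e inequality of Remark~\ref{r-equivnorms} when $D \ne \emptyset$, yields the claim; the case $D = \emptyset$ is vacuous because then $\d_D \equiv \infty$.

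The step I expect to be the main obstacle is pinning down the precise form of the Hardy inequality: one needs the distance on the left-hand side to be measured to the prescribed $(d-1)$-set $D$ alone, rather than to the full boundary $\partial \Omega_\bullet$, and one needs the $p$-fatness statement in a form directly applicable to Ahlfors-David sets without auxiliary hypotheses on the complement. The extension and density arguments, by contrast, are comparatively mechanical given Lemma~\ref{l-extend}.
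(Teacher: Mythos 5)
Your overall strategy is the paper's: extend $f$ across the Neumann part so that only the trace on $D$ matters, then invoke the Lewis/Wannebo Hardy inequality for $W^{1,p}_0$ of an auxiliary set together with Lehrb\"ack's fatness criterion. Two points where your write-up has genuine gaps, and one where you misdiagnose the difficulty. First, you take $\Omega_\bullet = B^*\setminus D$; this set need not be connected (take $\Omega$ an annulus and $D$ its inner sphere), so Proposition~\ref{p-lewis}, which is stated for a domain whose \emph{complement} is uniformly $p$-fat, does not apply as quoted. This is exactly what Lemma~\ref{l-1eck} and Remark~\ref{r-pointiii} are for: the paper replaces $B\setminus D$ by the largest connected open set containing $\Omega$ and avoiding $D$, whose boundary is then exactly $D$ or $D\cup\partial B$. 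Second, your ``Lehrb\"ack criterion'' (an Ahlfors--David $(d-1)$-set is uniformly $p$-fat for all $p>1$) is not what Proposition~\ref{p-juha} says: the hypothesis there is an inner boundary density condition phrased in terms of the Hausdorff \emph{content} $\mathcal H^\infty_{d-1}$, whereas the Ahlfors--David condition controls the Hausdorff \emph{measure} $\mathcal H_{d-1}$. Passing from measure to content is not automatic and is precisely the role of Lemma~\ref{l-Lehrbaeckfix}; without it (or an equivalent reference) this step is unproved.

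On the other hand, the step you single out as ``the main obstacle'' is not one. You do not need a Hardy inequality with distance measured to $D$ alone: since $D$ is contained in the complement $K$ of the auxiliary set, one has $\d_K\le\d_D$, hence $|f/\d_D|\le|f/\d_K|$ pointwise, and the standard inequality \eqref{e-harrd} with distance to the full complement is \emph{stronger} than what you need. This is exactly how the paper concludes, via $\d_D\ge\d_{\partial\Omega_\bullet}$ on $\Omega$. Your extension/density argument for $\mathfrak E_\bullet f\in W^{1,p}_0(\Omega_\bullet)$ matches the paper's and is fine.
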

%
Since the statement of this theorem is void for $D = \emptyset$, we exclude
that case for this entire section. Please note, that then the norm on the
spaces $W^{1,p}_D$ may be taken as $\| \nabla \cdot \|_p$ in view of the
Ahlfors-David condition of $D$.

Let us first quote the two deep results on which the proof of
Theorem~\ref{t-hardy} will base.

\begin{proposition}[see \cite{lewis}, \cite{Wannebo}, see also \cite{korte}]
 \label{p-lewis}
 Let $\Xi \subseteq \R^d$ be a domain whose complement $K := \R^d \setminus
 \Xi$ is uniformly \emph {$p$-fat} (cf.\@ \cite{lewis} or \cite{korte}). Then
 Hardy's inequality
 \begin{equation} \label{e-harrd}
  \int_\Xi \left| \frac {g}{\d_K} \right|^p \; \d \mathrm x = \int_\Xi
	\left| \frac{g}{\d_{\partial \Xi}} \right|^p \; \d \mathrm x \le c
	\int_\Xi |\nabla g|^p \; \d \mathrm x 
\end{equation}
 holds for all $g \in C^\infty_0(\Xi)$ (and extends to all $g \in
 W^{1,p}_0(\Xi)$, $p \in {]1,\infty[}$ by density).
\end{proposition}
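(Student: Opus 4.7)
The plan is to combine a local capacity-weighted Poincar\'e inequality with a Whitney decomposition of $\Xi$, following the strategy of Lewis and Wannebo. The guiding principle is that for $g \in C^\infty_0(\Xi)$, extending $g$ by $0$ makes it vanish identically on $K$, and uniform $p$-fatness is precisely the quantitative statement that the zero set $K \cap B$ has ``large $p$-capacity'' inside any ball $B$ centred near $K$; one then converts this capacity bound into an $L^p$ suppression of $g$ near $K$.

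First I would invoke (from Maz'ya's monograph) a capacity-Poincar\'e inequality of the form
\[
  \int_B |u|^p \, \d \mathrm x \; \leq \; C \, \frac{r^d}{\operatorname{cap}_p(E,2B)} \int_{2B} |\nabla u|^p \, \d \mathrm x,
\]
valid for a ball $B = B(x_0,r) \subseteq \R^d$, a function $u \in W^{1,p}(2B)$ vanishing on a compact set $E \subseteq \overline{B}$, and $\operatorname{cap}_p$ the variational $p$-capacity. Next I would fix a Whitney decomposition $\{Q_j\}$ of $\Xi$ into dyadic cubes with $\ell(Q_j) \sim \dist(Q_j,K)$, choose for each $j$ a point $y_j \in K$ with $\dist(y_j,Q_j) \lesssim \ell(Q_j)$, and set $B_j := B(y_j, c\,\ell(Q_j))$ with $c$ large enough that $Q_j \subseteq B_j$. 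Uniform $p$-fatness applied at $y_j$ at scale $\ell(Q_j)$ gives $\operatorname{cap}_p(K \cap \overline{B_j}, 2B_j) \; \geq \; c' \ell(Q_j)^{d-p}$. Inserting this into the local inequality with $u=g$ and $E = K \cap \overline{B_j}$, and using $\d_K \sim \ell(Q_j)$ throughout $Q_j$, one obtains
\[
  \int_{Q_j} \left| \frac{g}{\d_K} \right|^p \, \d \mathrm x \; \leq \; C \int_{2B_j \cap \Xi} |\nabla g|^p \, \d \mathrm x.
\]
Summing over $j$ and invoking the bounded-overlap property of the Whitney family $\{2B_j\}$ yields the global Hardy inequality for $g \in C^\infty_0(\Xi)$; extension to all of $W^{1,p}_0(\Xi)$ is by density, since $C^\infty_0(\Xi)$ is dense there and $|\nabla g|^p$ depends continuously on $g$ in the $W^{1,p}$ norm.

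The main obstacle is that the cleanest version of the capacity-Poincar\'e inequality requires $p \leq d$, and a naive application yields the target estimate only at restricted exponents. To cover the full range $p \in {]1,\infty[}$ one needs the deep self-improvement property of uniform $p$-fatness --- if $K$ is uniformly $p$-fat, then it is uniformly $q$-fat for some $q \in {]1,p[}$ --- proved independently by Lewis via a reverse-H\"older estimate for capacitary potentials and by Wannebo via a direct iteration argument. One then runs the scheme above at the smaller exponent $q$ and recovers the statement at the original exponent $p$ by H\"older's inequality or, equivalently, a fractional-maximal-function argument exploiting the $L^{p/q}$-boundedness of the Hardy--Littlewood maximal operator. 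This self-improvement step, and not the Whitney patching or the local capacity-Poincar\'e estimate, is the analytical heart of the result.
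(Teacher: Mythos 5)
The paper does not prove this proposition at all -- it is imported verbatim from Lewis and Wannebo (see also Korte--Lehrb\"ack--Tuominen), so your sketch has to be measured against the argument in those sources, which it does follow in outline: local capacity--Poincar\'e estimate, lower capacity bound from fatness, Lewis's self-improvement of fatness, and a maximal function argument. Two points in your write-up are nonetheless genuinely wrong. First, the ``bounded-overlap property of the Whitney family $\{2B_j\}$'' that your middle paragraph relies on is false: the Whitney cubes $Q_j$ (and their fixed dilates) have bounded overlap, but your balls $B_j=B(y_j,c\,\ell(Q_j))$ are re-centred at boundary points, and balls coming from Whitney cubes of \emph{different generations} can all contain the same point. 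For a punctured ball $\Xi=B(0,1)\setminus\{0\}$, every Whitney cube adjacent to the puncture, at every scale $2^{-k}$, produces a ball essentially equal to $B(0,C2^{-k})$, so the overlap at a point $\mathrm x$ grows like $\log(1/\d_K(\mathrm x))$ and the summation $\sum_j\int_{2B_j}|\nabla g|^p$ is not controlled by $\int_\Xi|\nabla g|^p$. Hence the direct patching at the exponent $p$ does not close, even where the local estimates are valid. Second, your diagnosis of the ``main obstacle'' is off target: Maz'ya-type capacity--Poincar\'e inequalities are available for all $1<p<\infty$, and for $p>d$ the Hardy inequality is classical because singletons already have positive $p$-capacity. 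The true obstruction at the original exponent is precisely the overlap failure just described -- equivalently, the pointwise $p$-Hardy estimate $|g(\mathrm x)|\le C\,\d_K(\mathrm x)\bigl(M(|\nabla g|^p)(\mathrm x)\bigr)^{1/p}$ cannot be integrated, since $M$ is not bounded on $L^1$.

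The repair you name in the last paragraph is indeed the analytical heart and is how Lewis and the later authors argue: self-improve uniform $p$-fatness to uniform $q$-fatness for some $q<p$, derive the pointwise $q$-Hardy inequality, and then integrate using the $L^{p/q}$-boundedness of the Hardy--Littlewood maximal operator. Note, however, that in that argument the summation over Whitney pieces takes place on the \emph{left-hand side}, over the pairwise disjoint cubes $Q_j$, while the piling-up on the right is absorbed into the maximal function -- no bounded overlap of dilated balls centred on $K$ is ever needed, and no H\"older upgrade from local $q$-estimates to $p$-estimates is available in general (the Sobolev exponent of $q$ need not exceed $p$). So your proposal is acceptable only if the middle paragraph is demoted from ``proof'' to motivation and the final paragraph's maximal-function mechanism is carried out in full; as written, the step ``summing over $j$ and invoking the bounded-overlap property'' would fail. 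The closing density argument for extending from $C_0^\infty(\Xi)$ to $W^{1,p}_0(\Xi)$ is fine, provided one also passes to the limit on the left via Fatou's lemma.
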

 
\begin{proposition}[\protect{\cite[Theorem 1]{juha}}] \label{p-juha}
 Let $\Xi \subseteq \R^d$ be a domain and let $\mathcal H^\infty_{d-1}$
 denote the $(d-1)$-dimensional Hausdorff content, i.e.
 \[ \mathcal H^\infty_{d-1}(A) := \inf \Bigl\{ \sum_{j=1}^\infty r_j^{d-1} :
	\ x_j \in A, \ r_j > 0, \ A \subseteq \bigcup_{j=1}^\infty B(x_j, r_j)
	\Bigr\}.
 \]
 If $\Xi$ satisfies the \emph{inner
 boundary density condition}, i.e.
 \begin{equation} \label{e-fatn}
  \mathcal H_{d-1}^\infty \bigl( \partial \Xi \cap
	B(\mathrm x, 2 \d_{\partial \Xi}(\mathrm x)) \bigr) \ge c
	\, \d_{\partial \Xi}(\mathrm x)^{d-1}, \quad \mathrm x \in \Xi,
 \end{equation}
 for some constant $c > 0$, then the complement of $\Xi$ in $\R^d$ is uniformly
 $p$-fat for all $p \in {]1,\infty[}$.
\end{proposition}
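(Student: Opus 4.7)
The plan is to prove uniform $p$-fatness of $K := \R^d \setminus \Xi$ for every $p \in {]1, \infty[}$. Single points already carry positive $p$-capacity as soon as $p > d$, so in that range uniform $p$-fatness holds trivially for any non-empty $K$, and the borderline $p = d$ can be handled by a limiting argument. Thus I reduce to the non-trivial range $p \in {]1, d[}$, where the target estimate
\[ \mathrm{cap}_p \bigl(K \cap \overline{B}(y, r),\, B(y, 2r)\bigr) \;\gtrsim\; r^{d-p} \]
has to be verified uniformly in $y \in K$ and $r > 0$.

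The bridge between the geometric hypothesis \eqref{e-fatn} and this analytic conclusion is a classical capacity--Hausdorff content inequality that figures prominently in Maz'ya's book \cite{Mazya}: for $p \in {]1, d[}$ and any compact $F \subseteq \overline{B}(y, r)$,
\begin{equation} \label{e-cap-cont}
\mathrm{cap}_p \bigl(F,\, B(y, 2r)\bigr) \;\gtrsim\; r^{1-p} \, \mathcal H^\infty_{d-1}(F).
\end{equation}
The estimate is sharp in its scaling, as a test on $F = \overline B(y, r/2)$ confirms, and the restriction $p > 1$ enters precisely through the strict inequality $d - 1 > d - p$, which is what makes Frostman-type measures available on sets of positive $(d-1)$-dimensional Hausdorff content. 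In view of \eqref{e-cap-cont}, the proposition reduces to exhibiting, for every admissible $y$ and $r$, a subset $F \subseteq K \cap \overline B(y, r)$ with $\mathcal H^\infty_{d-1}(F) \gtrsim r^{d-1}$.

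The geometric construction of $F$ proceeds by a nested dichotomy. If $|K \cap B(y, r/4)| \gtrsim r^d$, the elementary bound $\mathcal H^\infty_{d-1}(E) \gtrsim |E| / \mathrm{diam}(E)$ (for bounded $E$, obtained by comparing $\sum r_j^{d-1}$ and $\sum r_j^d$ in any admissible cover) immediately gives $\mathcal H^\infty_{d-1}(K \cap \overline B(y, r/4)) \gtrsim r^{d-1}$. Otherwise $\Xi$ dominates $B(y, r/4)$ in measure, and I split further: either some point $z \in \Xi \cap B(y, r/4)$ satisfies $\d_{\partial \Xi}(z) \gtrsim r$, whence a \emph{single} application of \eqref{e-fatn} at $z$ places $\gtrsim r^{d-1}$ content of $\partial \Xi \subseteq K$ inside $B(z, 2 \d_{\partial \Xi}(z)) \subseteq B(y, r)$; or every point of $\Xi \cap B(y, r/4)$ is shallow, in which case a Whitney decomposition of $\Xi$ yields cubes $Q$ of sidelength $\ell_Q$ much smaller than $r$ with $\sum_Q \ell_Q^d \gtrsim r^d$, and \eqref{e-fatn} is applied at each Whitney centre.

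The principal obstacle is the aggregation of local content in this last case. Because $\mathcal H^\infty_{d-1}$ is only countably \emph{sub}additive, naively summing the local lower bounds $\mathcal H^\infty_{d-1}(\partial \Xi \cap B(z_Q, 2 \ell_Q)) \gtrsim \ell_Q^{d-1}$ would produce an inequality in the wrong direction. The remedy is a Vitali-type extraction: select a subfamily $\{Q_k\}$ of Whitney cubes for which the assigned balls $B(z_{Q_k}, 2 \ell_{Q_k})$ are pairwise disjoint, while the $5$-fold enlargements $B(z_{Q_k}, 10 \ell_{Q_k})$ still cover $\bigcup_Q B(z_Q, 2 \ell_Q)$. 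The pairwise disjointness forces any admissible cover of the union $\bigcup_k \partial \Xi \cap B(z_{Q_k}, 2 \ell_{Q_k})$ to spend distinct balls on distinct pieces, so the local lower bounds add, while the Vitali enlargement transfers the Whitney-volume sum to the selected subfamily with a loss of at most $5^d$, yielding $\sum_k \ell_{Q_k}^{d-1} \gtrsim r^{d-1}$. Combined with \eqref{e-cap-cont}, this closes the argument. The subadditivity bookkeeping in this Vitali step is, as the paper's acknowledgements record, exactly the lemma that required Maz'ya's intervention.
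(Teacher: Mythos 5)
First, a point of orientation: the paper does not prove Proposition~\ref{p-juha} at all -- it is quoted verbatim from Lehrb\"ack \cite[Theorem 1]{juha}, and the ``simple lemma'' alluded to in the acknowledgements is Lemma~\ref{l-Lehrbaeckfix} (comparing $\mathcal H^\infty_{d-1}$ with $\mathcal H_{d-1}$ on Ahlfors--David sets), which the authors need to \emph{verify the hypothesis}~\eqref{e-fatn} for $\partial\Omega_\bullet$, not to carry out any step inside the proof of the proposition. So your closing remark misattributes that lemma, and your argument is in any case an attempted substitute for Lehrb\"ack's proof. Your reduction to $p\in{]1,d[}$ (better justified by the monotonicity of uniform fatness in $p$ than by a ``limiting argument'' at $p=d$), the capacity--content inequality, and the first two branches of the dichotomy (large measure of $K$ in $B(y,r/4)$, or a point $z\in\Xi\cap B(y,r/4)$ with $\d_{\partial\Xi}(z)\gtrsim r$) are all sound.

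The gap is the aggregation step in the shallow case. Hausdorff content is \emph{not} superadditive over sets contained in pairwise disjoint balls: an admissible cover of the union is free to use one large ball that swallows many of the disjoint pieces at once, so ``distinct balls on distinct pieces'' is simply not forced. Concretely, place $N\asymp(r/\delta)^d$ pairwise disjoint balls of radius $\delta$ inside $B(0,r)$ and let $E_k$ be (say) a $(d-1)$-disc in the $k$-th one; then $\sum_k\mathcal H^\infty_{d-1}(E_k)\asymp r^d/\delta\gg r^{d-1}$, while $\mathcal H^\infty_{d-1}\bigl(\bigcup_k E_k\bigr)\lesssim r^{d-1}$ because the single ball $B(0,r)$ covers everything. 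This is exactly your regime: many Whitney cubes with $\ell_{Q_k}\ll r$ and $\sum_k\ell_{Q_k}^d\gtrsim r^d$, so $\sum_k\ell_{Q_k}^{d-1}$ is typically \emph{much larger} than $r^{d-1}$, and no lower bound on $\mathcal H^\infty_{d-1}\bigl(\bigcup_k\partial\Xi\cap B(z_{Q_k},2\ell_{Q_k})\bigr)$ follows from the Vitali selection. Patching Frostman measures of the pieces does not repair this either: the summed measure violates the growth condition $\mu(B(x,t))\lesssim t^{d-1}$ at intermediate scales $\ell\ll t\ll r$. In fact, had your scheme worked, it would establish a $(d-1)$-content density for the complement, i.e.\@ essentially ``$1$-fatness'', which is strictly stronger than the asserted $p$-fatness for $p>1$ and is not a consequence of the inner boundary density condition; this is precisely why $p>1$ must enter the aggregation itself, as it does in Lehrb\"ack's proof, where contributions from dyadic distance-to-boundary scales are summed at the level of capacities (via pointwise Hardy/maximal function estimates), the exponent gap $d-1>d-p$ providing the convergence that your content bookkeeping cannot supply.
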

The subsequent lemma will serve as the instrument to reduce our case to the
situation of a pure Dirichlet boundary.

\begin{lemma} \label{l-1eck}
 Let $B\supseteq \overline \Omega$ be an open ball. We define $\Omega_\bullet$
 as the union of all open, connected subsets of $B$ that contain $\Omega$ and
 avoid $D$. Then $\Omega_\bullet$ is open and connected and we have $\partial
 \Omega_\bullet = D$ or $\partial \Omega_\bullet = D \cup
 \partial B$.
\end{lemma}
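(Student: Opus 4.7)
The plan is to identify $\Omega_\bullet$ concretely as the connected component of $B \setminus D$ that contains $\Omega$, from which the openness and connectedness drop out immediately and the boundary description becomes a topological exercise. Since $\Omega$ is connected, open and disjoint from $D$ (as $D \subseteq \partial \Omega$), it lies in a single connected component $C$ of the open set $B \setminus D$; this $C$ is itself open, connected, contains $\Omega$, and avoids $D$, so $C \subseteq \Omega_\bullet$. Conversely every set $V$ in the union defining $\Omega_\bullet$ is a connected subset of $B \setminus D$ meeting $\Omega$, hence lies in $C$; so $\Omega_\bullet = C$. In particular $\Omega_\bullet$ is open and connected.

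Next, I would compute $\partial \Omega_\bullet$ in two pieces. For the part inside $B$: any point $\mathrm{x} \in B \setminus D$ sits in some connected component of $B \setminus D$; if that component is not $\Omega_\bullet$ it is an open set disjoint from $\Omega_\bullet$, so $\mathrm{x} \notin \overline{\Omega_\bullet}$. Thus $B \cap \partial \Omega_\bullet \subseteq D$. Conversely, $D \subseteq \partial \Omega \subseteq \overline{\Omega} \subseteq \overline{\Omega_\bullet}$ and $D \cap \Omega_\bullet = \emptyset$, so $D \subseteq \partial \Omega_\bullet$. Hence $B \cap \partial \Omega_\bullet = D$. Since $\Omega_\bullet \subseteq B$ we also have $\partial \Omega_\bullet \subseteq \overline{B} = B \cup \partial B$, so altogether $D \subseteq \partial \Omega_\bullet \subseteq D \cup \partial B$.

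It remains to establish the dichotomy: either $\partial \Omega_\bullet \cap \partial B = \emptyset$, or $\partial B \subseteq \partial \Omega_\bullet$. Here I would use that $D$, being a closed subset of the compact set $\partial \Omega \subseteq \overline{\Omega} \subseteq B$, is itself compact; consequently $\dd(D, \partial B) > 0$ and for sufficiently small $\eps > 0$ the spherical shell
\[
 A_\eps := \{\mathrm{x} \in B \with \dd(\mathrm{x}, \partial B) < \eps\}
\]
is contained in $B \setminus D$ and, because $d \ge 2$, is open and connected. Suppose some $\mathrm{x}_0 \in \partial B$ lies in $\partial \Omega_\bullet$; then arbitrarily close to $\mathrm{x}_0$ there are points of $\Omega_\bullet$, which must already lie in $A_\eps$. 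So $A_\eps$ is a connected subset of $B \setminus D$ meeting $\Omega_\bullet$, and hence $A_\eps \subseteq \Omega_\bullet$. Taking closures yields $\partial B \subseteq \overline{A_\eps} \subseteq \overline{\Omega_\bullet}$, and since $\Omega_\bullet \subseteq B$ is disjoint from $\partial B$, this forces $\partial B \subseteq \partial \Omega_\bullet$, proving the claim.

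The mildly delicate step, and the one I expect to be the main obstacle, is precisely this last dichotomy: ensuring that $\Omega_\bullet$ either stays uniformly away from $\partial B$ or wraps around all of it. It rests on two geometric facts that need to be pinned down carefully -- the compactness of $D$ inside $B$ (so that a clean thin shell exists) and the connectedness of the spherical shell (which fails for $d=1$ but is harmless here under Assumption~\ref{assu-general}~\ref{assu-general:i}).
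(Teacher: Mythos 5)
Your proof is correct and follows essentially the same route as the paper: both arguments establish $D \subseteq \partial\Omega_\bullet \subseteq D \cup \partial B$ by elementary point-set topology and then settle the dichotomy by absorbing a thin connected annulus near $\partial B$ into $\Omega_\bullet$ whenever $\Omega_\bullet$ comes close to $\partial B$. Your identification of $\Omega_\bullet$ as the connected component of $B \setminus D$ containing $\Omega$ is a slightly cleaner packaging of the maximality argument the paper runs by contradiction, but the substance is the same.
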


\begin{proof}
 The first assertion is obvious. The connectedness follows from the fact that
 all the sets that, by forming their union, generate $\Omega_\bullet$ contain
 $\Omega$, and, hence, a common point. It remains to show the last assertion.
 Clearly, we have $\partial \Omega_\bullet \subseteq  \overline B$.

 We claim that $D \subseteq \partial \Omega_\bullet$: Let $\mathrm x \in  D$.
 As $D \subseteq \partial \Omega$, we know that $\mathrm x$ is an accumulation
 point of $\Omega$ and thus also of $\Omega_\bullet$, since $\Omega \subseteq
 \Omega_\bullet$. Furthermore $\mathrm x \not \in \Omega_\bullet$. Hence,
 $\mathrm x \in \partial \Omega_\bullet$.

 We claim that $\partial \Omega_\bullet \subseteq \partial B \cup D$. Assume
 not. Then there exists $\mathrm x \in \partial \Omega_\bullet$ with $\mathrm x
 \in B \setminus D$. As $B \setminus D$ is open, it contains an open ball
 $K_\mathrm x$ centred at $\mathrm x$. Then $\Omega_\bullet \cup K_\mathrm x$
 is an open and connected (since $\mathrm x$ is a point of accumulation of
 $\Omega_\bullet$, the set $\Omega_\bullet \cap K_\mathrm x$ is not empty) set
 containing $\Omega$, contained in $B$ and not meeting $D$. As it strictly
 contains $\Omega_\bullet$, this contradicts the definition of
 $\Omega_\bullet$.

 Let us now consider an annulus $K_B \subseteq B$ that is adjacent to
 $\partial B$ and does not intersect $\overline \Omega$. If $\Omega_\bullet
 \cap K_B = \emptyset$, then $\partial \Omega_\bullet \subseteq B$, and,
 consequently, $\partial \Omega_\bullet = D$. If $\Omega_\bullet \cap K_B $ is
 not empty, then $\Omega_\bullet \cup K_B$ is open, connected, contains
 $\Omega$, avoids $D$ and is contained in $B$. Hence, $\Omega_\bullet \cup K_B
 \subseteq \Omega_\bullet$, what implies $\partial B \subseteq
 \partial \Omega_\bullet$.
\end{proof}

\begin{remark} \label{r-pointiii}
 At first glance one might think that $\Omega_\bullet$ could always be taken as
 $B \setminus D$. The point is that this set need not be connected, as the
 following example shows: take $\Omega = \{ \mathrm x : 1 < |\mathrm x| < 2 \}$
 and $D = \{ \mathrm x : |\mathrm x| = 1 \}$. Obviously, if a ball $B$ contains
 $\overline \Omega$, then $B \setminus D$ cannot be connected. In the spirit of
 Lemma \ref{l-1eck}, here the set $\Omega_\bullet$ has to be taken as $B
 \setminus (D \cup \{\mathrm x: |\mathrm x| < 1 \})$.
\end{remark}

The next lemma links the Hausdorff content, appearing in
Proposition~\ref{p-juha}, to the Hausdorff measure, compare also \cite{carls}.
\begin{lemma} \label{l-Lehrbaeckfix}
 If $F \subseteq \R^d$ is bounded and satisfies the Ahlford-David condition
 \eqref{e-ahlf}, then there is a $C \ge 0$ with $\mathcal H^\infty_{d-1}(E) \ge
 C \mathcal H_{d-1}(E)$ for every non-empty Borel set $E \subseteq F$.
\end{lemma}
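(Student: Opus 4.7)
The plan is to derive the bound from countable subadditivity of $\mathcal H_{d-1}$, reduced to a uniform upper estimate on the $(d-1)$-measure of ball traces inside $F$. More precisely, I would first show that there is a constant $\tilde C > 0$ such that
\[
  \mathcal H_{d-1}\bigl(F \cap B(\mathrm x, r)\bigr) \le \tilde C\, r^{d-1}
  \qquad \text{for every } \mathrm x \in F \text{ and every } r > 0,
\]
and then invoke countable subadditivity on any admissible ball cover realising the Hausdorff content of $E$. Note that the inequality typically worth proving is the reverse of the trivial one $\mathcal H^\infty_{d-1}(E) \le \mathcal H_{d-1}(E)$, and it is precisely Ahlfors--David regularity that makes it possible.

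For the uniform ball estimate, the range $r \le r_{AD}$ is immediate from the upper half of the Ahlfors--David condition \eqref{e-ahlf}, with constant $c_1$. To handle the range $r > r_{AD}$ one needs finiteness of the total mass $M := \mathcal H_{d-1}(F)$. Since $F$ is bounded, one may cover it by a \emph{finite} family of balls $B(\mathrm y_i, r_{AD}/2)$, $i=1,\dots,N$; every such ball that meets $F$ is contained in $B(\mathrm z_i, r_{AD})$ for some $\mathrm z_i \in F$, and the Ahlfors--David upper estimate bounds its $\mathcal H_{d-1}$-mass by $c_1 r_{AD}^{d-1}$. Countable (finite) subadditivity yields $M \le N c_1 r_{AD}^{d-1} < \infty$. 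For $r > r_{AD}$ one then has $\mathcal H_{d-1}(F \cap B(\mathrm x, r)) \le M \le (M / r_{AD}^{d-1})\, r^{d-1}$, so the choice $\tilde C := \max(c_1,\, M / r_{AD}^{d-1})$ works on all scales.

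With the uniform estimate in place, given any admissible cover $E \subseteq \bigcup_j B(\mathrm x_j, r_j)$ with $\mathrm x_j \in E \subseteq F$, one has $E \subseteq \bigcup_j \bigl(F \cap B(\mathrm x_j, r_j)\bigr)$, so countable subadditivity of $\mathcal H_{d-1}$ combined with the uniform bound gives
\[
  \mathcal H_{d-1}(E) \le \sum_{j=1}^\infty \mathcal H_{d-1}\bigl(F \cap B(\mathrm x_j, r_j)\bigr) \le \tilde C \sum_{j=1}^\infty r_j^{d-1}.
\]
Taking the infimum over all such covers produces $\mathcal H_{d-1}(E) \le \tilde C\, \mathcal H^\infty_{d-1}(E)$, i.e.\@ the asserted inequality with $C = 1/\tilde C$. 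The only real obstacle lies in the large-radius part of the uniform ball estimate: the AD condition is silent for $r > r_{AD}$, so one must first extract finiteness of $\mathcal H_{d-1}(F)$ from boundedness and absorb the large-scale mass into the constant. Everything else is a routine application of subadditivity.
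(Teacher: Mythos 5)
Your proof is correct and follows essentially the same route as the paper's: the upper half of the Ahlfors--David condition controls $\mathcal H_{d-1}(F\cap B(\mathrm x_j,r_j))$ for small radii, the finiteness of $\mathcal H_{d-1}(F)$ (obtained from a finite cover of the bounded set $F$ by small balls) absorbs the large radii, and countable subadditivity plus taking the infimum over admissible covers concludes. Your write-up is in fact slightly more careful than the paper's, which case-splits at radius $1$ rather than $r_{AD}$.
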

\begin{proof}
 Let $\{B(x_j, r_j)\}_{j \in \N}$ be a covering of $E$ by open balls centered
 in $E$. If $r_j \le 1$, then $r_j^{d-1}$ is comparable to
 $\mathcal H_{d-1}(F \cap B(x_j, r_j))$, whereas if $r_j > 1$ then certainly
 $\mathcal H_{d-1}(F \cap B(x_j, r_j)) \le \mathcal H_{d-1}(F) r_j^{d-1}$. Note
 carefully that $0 < \mathcal H_{d-1}(F) < \infty$ holds, since $F$ can be
 covered by finitely many balls with radius one centered in $F$. Altogether,
 \begin{align*}
  \sum_{j= 1}^\infty r_j^{d-1} \ge C \sum_{j=1}^\infty
	\mathcal H_{d-1}(F \cap B(x_j, r_j)) \ge C \mathcal H_{d-1} \Bigl( F
	\cap \bigcup_{j=1}^\infty B(x_j,r_j) \Bigr) \ge C \mathcal H_{d-1}(E)
 \end{align*}
 with $C$ depending only on $F$. Taking the infimum,
 $\mathcal H_{d-1}^\infty(E) \ge C \mathcal H_{d-1}(E)$ follows.
\end{proof}

 Let us now prove Theorem~\ref{t-hardy}. One first observes that in both cases
 appearing in Lemma~\ref{l-1eck} the set $\partial \Omega_\bullet$ satisfies
 the Ahlfors-David condition: for the boundary part $D$ this was supposed in 
 Assumption~\ref{assu-general}, and for $\partial B$ this is obvious. Thus,
 from the Ahlfors-David condition for $\Omega_\bullet$ we get constants
 $r_\bullet > 0$ and $c > 0$ with
 \[ \mathcal H_{d-1} \bigl( \partial \Omega_\bullet \cap B(\mathrm y,r) \bigr)
	\ge c  r^{d-1}, \quad \mathrm y \in \partial \Omega_\bullet,\ r \in
	{]0, r_\bullet]}.
 \]
 This yields, invoking Lemma~\ref{l-Lehrbaeckfix},
 \begin{align} \label{e-fatn00}
  \mathcal H_{d-1}^\infty \bigl( \partial \Omega_\bullet \cap B(\mathrm y,r)
	\bigr) &\ge C \mathcal H_{d-1} \bigl( \partial \Omega_\bullet \cap
	B(\mathrm y,r) \bigr) \nonumber \\
  & \ge C c \Bigl( \frac{r_\bullet}{\mathrm{diam}(\Omega_\bullet)} \Bigr)^{d-1}
	r^{d-1}, \quad \mathrm y \in \partial \Omega_\bullet,\ r \in
	{]0, \mathrm{diam}(\Omega_\bullet)}].
 \end{align}
 But \eqref{e-fatn00} implies the inner boundary density
 condition~\eqref{e-fatn}, compare \cite[p.~2195]{juha}. Thus Proposition~\ref{p-lewis}
 and Proposition~\ref{p-juha} imply that Hardy's inequality in \eqref{e-harrd}
 is true for $\Xi = \Omega_\bullet$ and all $g \in W^{1,p}_0(\Omega_\bullet)$.

 In view of Lemma~\ref{l-1eck} we can define an extension operator
 $\mathfrak E_\bullet : W^{1,p}_D(\Omega) \to W^{1,p}_0(\Omega_\bullet)$ as
 follows: If $\partial \Omega_\bullet = D$, then we put $\mathfrak E_\bullet
 \psi := \mathfrak E \psi|_{\Omega_\bullet}$, where $\mathfrak E$ is the
 extension operator from Lemma~\ref{l-extend}. If $\partial \Omega_\bullet = D
 \cup \partial B$, then we choose an $\eta \in C_0^\infty(B)$ with $\eta \equiv
 1$ on $\overline \Omega$ and put $\mathfrak E_\bullet \psi := (\eta
 \mathfrak E \psi )|_{\Omega_\bullet}$. Now, let $f \in  W^{1,p}_D(\Omega)$.
 Then we can use \eqref{e-harrd} for $\mathfrak E_\bullet f \in
 W^{1,p}_0(\Omega_\bullet)$ and we finally find
\begin{align*}
  \int_\Omega \left| \frac{f}{\d_D} \right|^p \; \dd \mathrm{x} &\le
	\int_\Omega \left| \frac{f}{\d_{\partial \Omega_\bullet}} \right|^p
	\; \dd \mathrm{x} \le \int_{\Omega_\bullet} \left|
	\frac{\mathfrak E_\bullet f}{\d_{\partial \Omega_\bullet}} \right|^p
	\; \dd \mathrm{x} \le  c \int_{\Omega_\bullet} |
	\nabla (\mathfrak E_\bullet f)|^p \; \dd \mathrm{x} \nonumber \\
  &\le c \|f\|^p_{W^{1,p}_D} \le c \int_{\Omega} |\nabla f|^p
	\; \dd \mathrm{x}.
\end{align*}
This proves Theorem~\ref{t-hardy}.

\begin{remark} \label{r-otherproof}
 There is another strategy of proof for Hardy's inequality~\eqref{e-harrd},
 avoiding the concept of 'uniformly $p$-fat'. In \cite{juha} it is proved that
 the inner boundary density condition~\eqref{e-fatn} implies the so-called
 $p$-pointwise Hardy inequality which implies Hardy's inequality, compare also
 \cite{korte}.
\end{remark}
%
%
%
%
\section{An adapted Calder\'on-Zygmund decomposition} \label{sec-CZD}
%
%
%
%
%
%
%
%
\noindent
The proof of Theorem~\ref{t-mainsect} heavily relies on a Calder\'on-Zygmund
decomposition for $W^{1,p}_D$ functions. The important point, which brings the
mixed boundary conditions into play, is that we have to make sure that for $f
\in \mathrm{dom}_{L^p}(A_0^{1/2})$, the good and the bad part of the
decomposition are both also in this space. This is not guaranteed neither by
the classical Calder\'on-Zygmund decomposition nor by the version for Sobolev
functions in \cite[Lemma~4.12]{auschmem}. This problem will be solved by
incorporating the Hardy inequality into the decomposition.

For ease of notation, in the whole section we set $1/\d_\emptyset = 0$.

We denote by $\QQ$ the set of all closed axis-parallel cubes, i.e.\@ all sets
of the form $\{ \mathrm x \in \R^d : |\mathrm x - \mathrm m|_\infty \le \ell/2
\}$ for some midpoint $\mathrm m \in \R^d$ and sidelength $\ell >
0$. In the following, for a given cube $Q \in \QQ$ we will often write $s Q$
for some $s > 0$, meaning the cube with the same midpoint $\mathrm m$, but
sidelength $s \ell$ instead of $\ell$.

Furthermore, for every $\mathrm x \in \R^d$ we set $\QQ_{\mathrm x} :=
 \{ Q \in \QQ : \mathrm x \in Q^\circ \}$. Now we may define the
Hardy-Littlewood maximal operator $M$ for all $\varphi \in L^1(\R^d)$ by
\begin{equation} \label{e-HLMO}
 (M \varphi)(\mathrm x) = \sup_{Q \in \QQ_{\mathrm x}} \frac{1}{|Q|} \int_Q
	|\varphi|, \qquad \mathrm x \in \R^d.
\end{equation}
It is well known (see \cite[Ch.~1]{Ste70}) that $M$ is of weak type $(1,1)$,
so there is some $K > 0$, such that for all $p \ge 1$
\begin{equation} \label{e-Maxfunk}
  \bigl| \bigl\{ \mathrm x \in \R^d : |[M (|\varphi|^p)](\mathrm x)| > \alpha^p
	\bigr\} \bigr| \le \frac{K}{\alpha^p} \|\varphi\|_{L^p(\R^d)}^p, \quad
	\text{for all } \alpha > 0 \text{ and } \varphi \in L^p(\R^d).
\end{equation}
%
%
\begin{lemma} \label{CZZ}
Let $\Omega$ and $D$ satisfy Assumption~\ref{assu-general}. Let $p \in {]1,\infty[}$,
$f \in W^{1,p}_D$ and $\alpha > 0$ be given. Then there exist an at most countable index
set $I$, cubes $Q_j \in \QQ$, $j \in I$, and measurable functions $g, b_j : \Omega 
\to \R$, $j \in I$, such that for some constant $N \ge 0$, independent of $\alpha$ and $f$,

\renewcommand{\theenumi}{(\arabic{enumi})}
\renewcommand{\labelenumi}{\theenumi}

\begin{enumerate}
\item $\displaystyle f = g + \sum_{j \in I} b_j$,\label{CZZ(1)}
\item $\| \nabla g \|_{L^\infty} + \|g\|_{L^\infty} + \| g/ \d_D \|_{L^\infty}
	\le N \alpha$,\label{CZZ(2)}
\item $\displaystyle \supp(b_j) \subseteq Q_j, \ b_j \in W^{1,1}_D \cap W^{1,p}
        \text{ and } \int_\Omega \Bigl( |\nabla b_j| + |b_j| +
	\frac{|b_j|}{\d_D} \Bigr) \le N \alpha |Q_j|$ for every $j \in
	I$,\label{CZZ(3)}
\item $\displaystyle \sum_{j \in I} |Q_j| \le \frac{N}{\alpha^p}
	\|f\|_{W^{1,p}_D}^p$,\label{CZZ(4)}
\item $\displaystyle \sum_{j \in I} {\bf 1}_{Q_j}(\mathrm x) \le N$ for all
	$\mathrm x \in \R^d$,\label{CZZ(5)}
\item $\|g\|_{W^{1,p}_D} \le N \|f \|_{W^{1,p}_D}$.\label{CZZ(6)}
\end{enumerate}
If $D \neq \emptyset$, all the norms $\| f \|_{W^{1,p}_D}$ may be replaced by
$\| \nabla f\|_{L^p}$.
\end{lemma}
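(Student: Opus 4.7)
The plan is to adapt the Sobolev Calder\'on--Zygmund decomposition of \cite{auschmem} by incorporating the Hardy quotient $|\tilde f|/\d_D$ into the controlling maximal function; this is what will simultaneously force $b_j \in W^{1,p}_D$ and deliver $\int|b_j|/\d_D \lesssim \alpha |Q_j|$ for cubes close to $D$. First I would extend $f$ to $\tilde f := \mathfrak{E} f \in W^{1,p}_D(\R^d)$ via Lemma~\ref{l-extend}, the support being bounded by Remark~\ref{r-remain}~\ref{r-remain:i}. Applying Theorem~\ref{t-hardy} on a domain $\Omega_\bullet$ as in Lemma~\ref{l-1eck} that contains $\supp \tilde f$ yields $\|\tilde f/\d_D\|_{L^p(\R^d)} \le c\|f\|_{W^{1,p}_D}$, so that
\[
 h := |\nabla \tilde f| + |\tilde f| + |\tilde f|/\d_D
\]
(with the last summand set to zero on $D$) belongs to $L^p(\R^d)$ with $\|h\|_{L^p} \le c\|f\|_{W^{1,p}_D}$.

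Next, put $E_\alpha := \{\mathrm x \in \R^d : M(h^p)(\mathrm x) > \alpha^p\}$, so that $|E_\alpha| \le K\alpha^{-p}\|h\|_{L^p}^p$ by \eqref{e-Maxfunk}. If $E_\alpha = \emptyset$, take $I = \emptyset$ and $g = f$, the properties following from the Lebesgue differentiation theorem applied to $h$. Otherwise I Whitney-decompose $E_\alpha$ into cubes $\{Q_j\}_{j \in I}$ with bounded overlap of $2Q_j$ and $\mathrm{diam}(Q_j) \simeq \dist(Q_j, E_\alpha^c)$, and fix a subordinate smooth partition of unity $(\chi_j)$ with $\supp \chi_j \subseteq 2Q_j$, $\sum \chi_j = 1$ on $E_\alpha$, $\|\nabla \chi_j\|_\infty \lesssim \mathrm{diam}(Q_j)^{-1}$. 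The essential modification is the index split
\[
 I_1 := \{j \in I : 2Q_j \cap D \neq \emptyset\}, \qquad I_2 := I \setminus I_1,
\]
together with $b_j := \tilde f \chi_j$ for $j \in I_1$ and $b_j := (\tilde f - \langle \tilde f\rangle_{Q_j})\chi_j$ for $j \in I_2$, where $\langle \tilde f\rangle_{Q_j}$ denotes the mean of $\tilde f$ on $Q_j$. Set $g := \tilde f - \sum_j b_j$, restrict everything to $\Omega$, and rename $2Q_j$ as $Q_j$ so that $\supp b_j \subseteq Q_j$. For $j \in I_1$, $b_j \in W^{1,p}_D$ since $\tilde f \in W^{1,p}_D(\R^d)$ can be approximated by $C^\infty$ functions vanishing near $D$ and multiplication by $\chi_j \in C_0^\infty(\R^d)$ preserves this; for $j \in I_2$ it is immediate because $\supp b_j$ is separated from $D$.

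Properties (4) and (5) come from bounded overlap combined with \eqref{e-Maxfunk}. For (3), each Whitney cube has a neighbour in $E_\alpha^c$ containing a point $\mathrm y_j$ with $M(h^p)(\mathrm y_j) \le \alpha^p$, so H\"older yields $\tfrac{1}{|CQ_j|}\int_{CQ_j} h \le \alpha$ for any fixed dilate. When $j \in I_1$ one has $\d_D \lesssim \mathrm{diam}(Q_j)$ on $2Q_j$, so that $\int|\nabla b_j| \le \int_{2Q_j}|\nabla\tilde f| + \mathrm{diam}(Q_j)^{-1}\int_{2Q_j}|\tilde f|$ and $\int |b_j|/\d_D \le \int_{2Q_j}|\tilde f|/\d_D$ both collapse to an average of $h$ over $2Q_j$, giving the $\alpha|Q_j|$-bound. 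For $j \in I_2$, Poincar\'e's inequality controls the mean-subtracted term and $\d_D \gtrsim \mathrm{diam}(Q_j)$ on $Q_j$ reduces $|b_j|/\d_D$ to the classical $|b_j|/\mathrm{diam}(Q_j)$ situation. Property (6) then follows from these cube-by-cube $W^{1,p}$-bounds and bounded overlap.

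The main obstacle is property (2), in particular $\|g/\d_D\|_\infty \le N\alpha$. My strategy is to first prove that $g$ is Lipschitz on $\R^d$ with constant $\lesssim \alpha$. On $E_\alpha^c$, Lebesgue differentiation of $M(h^p) \le \alpha^p$ gives $|\nabla \tilde f| \le \alpha$ directly. On $E_\alpha$, $\sum \chi_j = 1$ yields $g|_{E_\alpha} = \sum_{j \in I_2}\langle \tilde f\rangle_{Q_j}\chi_j$, and writing $\bar f_j := \langle \tilde f\rangle_{Q_j}$ for $j \in I_2$ and $\bar f_j := 0$ for $j \in I_1$, one uses $\sum_j \nabla \chi_j = 0$ to express $\nabla g(\mathrm x) = \sum_j(\bar f_j - \bar f_k)\nabla \chi_j(\mathrm x)$ at $\mathrm x \in Q_k$; the differences are controlled by the Hardy-sharpened bound
\[
 |\langle \tilde f\rangle_{Q_j}| \le \Bigl(\sup_{Q_j}\d_D\Bigr)\cdot\frac{1}{|Q_j|}\int_{Q_j} h \lesssim \mathrm{diam}(Q_j)\,\alpha
\]
whenever $Q_j$ has a Whitney neighbour in $I_1$, which is precisely where $\bar f_k$ switches between $I_2$- and $I_1$-values. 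Once this Lipschitz bound is in place, the observation that $g$ vanishes on $D$ by construction (both $\tilde f|_D = 0$ and each $b_j|_D = 0$) gives $|g(\mathrm x)| \le \mathrm{Lip}(g)\,\d_D(\mathrm x)$ and hence $\|g/\d_D\|_\infty \le N\alpha$. Without $|\tilde f|/\d_D$ present in $h$ from the outset, the sharpened mean-value estimate above would be unavailable and the Lipschitz control of $g$ would fail precisely on cubes near $D$, which is why the Hardy quotient must be wired into the controlling maximal function.
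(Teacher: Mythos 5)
Your proposal follows essentially the same route as the paper: extend by $\mathfrak E_\bullet$, control the level set of $M\bigl(|\nabla\tilde f|+|\tilde f|+|\tilde f|/\d_D\bigr)$, Whitney-decompose it, and split the cubes into those near $D$ (where $b_j=\tilde f\chi_j$) and those far from $D$ (where the mean is subtracted), with the Hardy quotient in the maximal function doing exactly the work you describe. The only variation is that you obtain $\|g/\d_D\|_{L^\infty}\le N\alpha$ from the Lipschitz bound on $g$ together with $g|_D=0$, whereas the paper estimates $|g|/\d_D$ directly on the usual cubes via $\d_D(\mathrm y)\le C\,\d_D(\mathrm x)$ and the maximal function of $|\tilde f|/\d_D$ (reserving the Lipschitz-plus-trace argument for Corollary~\ref{cor-CZZ}); both are sound.
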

%
\renewcommand{\theenumi}{(\roman{enumi})}
\renewcommand{\labelenumi}{\theenumi}

%
%
In order to verify the final statement, note that for $D \neq \emptyset$ the
Ahlfors-David condition guarantees that the surface measure of $D$ is strictly
positive, cf.\@ Remark~\ref{r-surfmeas}~\ref{r-surfmeas:ii}. Thus we can
conclude by Remark~\ref{r-equivnorms}.

We will subdivide the proof of Lemma~\ref{CZZ} into six steps.

\subsection*{Step 1: Adapted Maximal function}

Let $f \in W^{1,p}_D$. Then, using the extension operator $\mathfrak E_\bullet$
from the proof of Theorem~\ref{t-hardy}, we find $\mathfrak E_\bullet f \in
W^{1,p}_0(\Omega_\bullet)$. So we may extend this function again by zero to
the whole of $\R^d$, obtaining a function $\tilde f \in W^{1,p}_D(\R^d)$ that
satisfies $\mathrm{supp}(\tilde f) \subseteq B$ for the ball $B$ from
Section~\ref{sec-Hardy} and the estimate $\|\tilde f \|_{W^{1,p}_D(\R^d)} \le C
\|f\|_{W^{1,p}_D}$ with a constant $C$ that does not depend on $f$.
Furthermore, Hardy's inequality
\begin{equation}\label{e-TildeHardy}
  \| \tilde f/\d_D \|_{L^p(\R^d)} \le C \|\nabla \tilde f\|_{L^p(\R^d)}
\end{equation}
holds, cf.\@ Section~\ref{sec-Hardy}.

\begin{remark}
 Using $\tilde f$, we will construct the Calder\'on-Zygmund decomposition on
 all of $\R^d$ and afterwards restrict again to $\Omega$. Admittedly, it would
 be more natural to stay inside $\Omega$, but this leads to several technical
 problems, since the regularity of the boundary of cubes in $\Omega$,
 i.e.\@ $\Omega \cap Q$ for some cube $Q$ in $\R^d$, may be very low, so that
 for instance the validity of the Poincar\'e inequality is no longer obvious.
 If $\Omega$ is more regular, say a strong Lipschitz domain, this extension
 can be omitted.
\end{remark}

We consider the open set
\[ E := \bigl\{ \mathrm x \in \R^d : \bigl[ M \bigl( |\nabla \tilde f| +
	|\tilde f| + |\tilde f|/\d_D \bigr) \bigr] (\mathrm x) > \alpha
	\bigr\}.
\]

The easiest case is that of $E = \emptyset$. Then we may take $I = \emptyset$
and $g = f$ and the only assertion we have to show is \ref{CZZ(2)}, the rest
being trivial. So, let $\mathrm x \in \Omega$ be given. Since $\mathrm x$ is
not in $E$, we have for almost all such $\mathrm x$, by the fact that
$h(\mathrm x)  \le (Mh)(\mathrm x)$ for all Lebesgue points of an $L^1(\R^d)$
function $h$,
\begin{align*}
  |\nabla g(\mathrm x)| + |g(\mathrm x)| + |g(\mathrm x)|/\d_D(\mathrm x) &=
	|\nabla f(\mathrm x)| + |f(\mathrm x)| + |f(\mathrm x)|/\d_D(\mathrm x)
	\\
  &= |\nabla \tilde f(\mathrm x)| + |\tilde f(\mathrm x)| +
	|\tilde f(\mathrm x)|/\d_D(\mathrm x) \\
  &\le \bigl[ M \bigl( |\nabla \tilde f| + |\tilde f| + |\tilde f|/\d_D \bigr)
	\bigr] (\mathrm x) \le \alpha.
\end{align*}
This implies \ref{CZZ(2)}.

\medskip

So, we turn to the case $E \neq \emptyset$. By Jensen's inequality,
\eqref{e-Maxfunk}, \eqref{e-TildeHardy} and the continuity of the extension
operator we obtain
\begin{align}
  |E| &\le \bigl| \bigl\{ \mathrm x \in \R^d : \bigl[ M \bigl( (
	|\nabla \tilde f| + |\tilde f| + |\tilde f|/\d_D )^p \bigr) \bigr]
	(\mathrm x) > \alpha^p \bigr\} \bigr| \nonumber \\
  &\le \frac{K}{\alpha^p} \bigl\| |\nabla \tilde f| + |\tilde f| +
	|\tilde f|/\d_D \bigr\|_{L^p(\R^d)}^p \le \frac{C}{\alpha^p} \|\tilde f\|_{W^{1,p}(\R^d)}^p \le \frac{C}{\alpha^p}
	\|f\|_{W^{1,p}_D}^p.\label{e-|E|}
\end{align}

In particular this measure is finite, so $F : = \R^d \setminus E \neq
\emptyset$. This allows for choosing a Whitney decomposition of $E$,
cf.\@ \cite[Lemmas~5.5.1 and 5.5.2]{BS88}, see also \cite{Ste70} and
\cite{Tor86}. Thus, we get an at most countable index set $I$ and a collection
of cubes $Q_j \in \QQ$, $j \in I$, with sidelength $\ell_j$ that fulfill the
following properties for some $c_1, c_2 \ge 1$

\medskip

\begin{minipage}[b]{.48\textwidth}
\begin{enumerate}
 \item $\displaystyle E = \bigcup_{j \in I} \textstyle \frac 89 Q_j$.
 \item $\displaystyle \frac 89 Q_j^\circ \cap \frac 89 Q_k^\circ =
	\emptyset$ for all $j,k \in I$, $j \neq k$.
 \item $\displaystyle Q_j \subseteq E$ for all $j \in I$.
\end{enumerate}
\end{minipage}
\begin{minipage}[b]{.48\textwidth}
\begin{enumerate}
 \setcounter{enumi}{3}%
 \item \label{pr-Whitney:iv} $\displaystyle \sum_{j \in I} {\mathbf 1}_{Q_j}
	\le c_1$.
 \item \label{pr-Whitney:v} $\displaystyle \frac{1}{c_2} \ell_j \le \d(Q_j, F)
	\le c_2 \ell_j$ for all $j \in I$.
\end{enumerate}
\end{minipage}

\medskip

There are two immediate consequences of these properties that are important to
observe. Firstly, the family $Q_j^\circ$, $j \in I$, is an open covering of $E$
and, secondly, \ref{pr-Whitney:v} implies that for some $\tilde c > 1$,
independent of $j$, we have
\begin{equation}\label{e-QjSchnittF}
  (\tilde c Q_j) \cap F \neq \emptyset \quad \text{for all } j  \in I.
\end{equation}

Now, \ref{pr-Whitney:iv} immediately implies \ref{CZZ(5)} and this, together
with \eqref{e-|E|} allows to prove \ref{CZZ(4)} due to
\[ \sum_{j \in I} |Q_j| = \int_E \sum_{j \in I} {\mathbf 1}_{Q_j} \le c_1 |E|
	\le \frac{C}{\alpha^p} \|f\|_{W^{1,p}_D}^p.
\]

\subsection*{Step 2: Definition of the good and bad functions}
Let $(\varphi_j)_{j \in I}$ be a partition of unity on $E$ with

\medskip

\begin{minipage}[t]{.48\textwidth}
\begin{itemize}
\item[a)] $\varphi_j \in C^\infty(\R^d)$,
\item[b)] $\supp(\varphi_j) \subseteq Q_j^\circ$,
\end{itemize}
\end{minipage}
\begin{minipage}[t]{.48\textwidth}
\begin{itemize}
\item[c)] $\varphi_j \equiv 1$ on $\frac 89 Q_j$,
\item[d)] $\|\varphi_j\|_{L^\infty} + \ell_j \|\nabla \varphi_j \|_{L^\infty}
	\le c$,
\end{itemize}
\end{minipage}

\medskip

\noindent
for all $j \in I$ and some $c > 0$. The construction of such a partition can
be found e.g.\@ in \cite[Section~5.5]{BS88}.

Let us distinguish two types of cubes $Q_j$. We say that $Q_j$ is a
\emph{usual} cube, if $\d(Q_j, D) \ge \ell_j$ and $Q_j$ is a \emph{special}
cube, if $\d(Q_j, D) < \ell_j$ (In the case $D = \emptyset$ all cubes are
seen as usual ones). Then we define for every $j \in I$, using the
notation $h_Q := \frac{1}{|Q|} \int_Q h$,
\[ \tilde b_j := \begin{cases}
           \bigl( \tilde f - \tilde f_{Q_j} \bigr) \varphi_j, &
		\text{if $Q_j$ is usual},\\
	   \tilde f \varphi_j, & \text{if $Q_j$ is special}.
          \end{cases}
\]
Setting $\tilde g : = \tilde f - \sum_{j \in I} \tilde b_j$ as well as $b_j :=
\tilde b_j|_\Omega$ and $g := \tilde g|_\Omega$, these functions automatically
satisfy \ref{CZZ(1)}. Note that there is no problem of convergence in this
sum, due to \ref{CZZ(5)}.

It is clear by construction that $\supp(b_j) \subseteq Q_j$ and $b_j \in
W^{1,p}(\Omega)$ for all $j \in I$. The next step is to show that $b_j \in
W^{1,1}_D$ and since $W^{1,p} \hookrightarrow W^{1,1}$, we only have to
establish the right boundary behaviour of $b_j$.

We start with the case of a usual cube $Q_j$. Then $b_j = \bigl( (\tilde f -
\tilde f_{Q_j} ) \varphi_j \bigr) |_\Omega$. Since $\varphi_j$ has support in
$Q_j$ and $\dd(Q_{j}, D) \ge \ell_j > 0$, the function $b_j$ can be
approximated by $C^{\infty}_c(\R^d \setminus D)$ functions in the norm of
$W^{1,1}$. Thus $b_j \in W^{1,1}_D$.

If $Q_j$ is  a special cube, we have $b_j = (\tilde f \varphi_j)|_\Omega$. The
fact that $\tilde f \in W^{1,p}_D(\R^d)$ implies that there is a sequence
$(\tilde f_k)_k \subseteq C^\infty_c(\R^d \setminus D)$, such that $\tilde f_k
\to \tilde f$ in $W^{1,p}(\R^d)$. Therefore, $(\tilde f_k \varphi_j)_k$ is a
sequence in $C^\infty_c(\R^d \setminus D)$ and we show that it converges to
$\tilde f \varphi_j$ in $W^{1,1}$, so that we can conclude that $b_j \in W^{1,1}_D$.
This convergence follows from $\varphi_j \in W^{1,p'}(\R^d)$ by
\[ \|\tilde f \varphi_j - \tilde f_k \varphi_j\|_{L^1} \le \|\tilde f -
	\tilde f_k \|_{L^p} \|\varphi_j\|_{L^{p'}} \to 0 \quad (k \to \infty)
\]
and the corresponding estimate for the gradient
\begin{align*}
  \bigl\| \nabla( \tilde f \varphi_j) - \nabla (\tilde f_k \varphi_j)
	\bigr\|_{L^1} &\le \bigl\| \nabla (\tilde f - \tilde f_k) \varphi_j
	\bigr\|_{L^1} + \bigl\| (\tilde f - \tilde f_k) \nabla \varphi_j
	\bigr\|_{L^1} \\
  &\le \bigl\| \nabla (\tilde f - \tilde f_k) \bigr\|_{L^p}
	\|\varphi_j\|_{L^{p'}} + \|\tilde f - \tilde f_k\|_{L^p}
	\|\nabla \varphi_j \|_{L^{p'}} \to 0 \quad (k \to \infty).
\end{align*}

\subsection*{Step 3: Proof of \ref{CZZ(3)}}

After the above considerations, it remains to prove the estimate. We start
again with the case of a usual cube and for later purposes we introduce some
$q \in [1, \infty[$. On usual cubes it holds $\nabla \tilde b_j = \nabla
\tilde f \varphi_j + ( \tilde f - \tilde f_{Q_j} ) \nabla \varphi_j$ and
using d) we obtain
\begin{align*}
  \int_{Q_j} |\nabla \tilde b_j|^q &\le \int_{Q_j} \bigl( |\nabla \tilde f|
	|\varphi_j| + |\tilde f - \tilde f_{Q_j}| |\nabla \varphi_j| \bigr)^q
	\le C \int_{Q_j} \bigl( |\nabla \tilde f|^q |\varphi_j|^q + |\tilde f
	- \tilde f_{Q_j}|^q |\nabla \varphi_j|^q \bigr) \nonumber \\
  &\le C \Bigl( \int_{Q_j} |\nabla \tilde f|^q + \frac{1}{\ell_j^q} \int_{Q_j}
	|\tilde f - \tilde f_{Q_j}|^q \Bigr).
\end{align*}
In the second integral we may now apply the Poincar\'e inequality, since
$\tilde f - \tilde f_{Q_j}$ has zero mean on $Q_j$. This yields
\begin{equation} \label{e-nablabj}
 \int_{Q_j} |\nabla \tilde b_j|^q \le C \Bigl( \int_{Q_j} |\nabla \tilde f|^q +
	\frac{1}{\ell_j^q} \mathrm{diam}(Q_j)^q \int_{Q_j} |\nabla \tilde f|^q
	\Bigr) \le C \int_{Q_j} |\nabla \tilde f|^q.
\end{equation}
We now specialize again to $q = 1$ and, invoking \eqref{e-QjSchnittF}, we pick
some $\mathrm z \in \tilde c Q_j\cap F$, and bring into play the maximal
operator:
\begin{align}
  \int_{Q_j} |\nabla \tilde b_j| &\le C \int_{\tilde c Q_j} |\nabla \tilde f
	| \le C |Q_j| \frac{1}{|\tilde c Q_j|} \int_{\tilde c Q_j} \Bigl(
	|\nabla \tilde f| + |\tilde f| + \frac{|\tilde f|}{\d_D} \Bigr)
	\label{e-(3).1} \\
&\le C |Q_j| \sup_{Q \in \QQ_z} \frac{1}{|Q|} \int_{Q} \Bigl(
	|\nabla \tilde f| + |\tilde f| + \frac{|\tilde f|}{\d_D} \Bigr) = C
	|Q_j| \biggl[ M \Bigl( |\nabla \tilde f| + |\tilde f| +
	\frac{|\tilde f|}{\d_D} \Bigr) \biggr](\mathrm z). \nonumber
\end{align}
Now, we capitalize that $\mathrm z \in F$ and obtain
\begin{equation}\label{e-nablabj1}
  \int_{\Omega} |\nabla b_j| \le \int_{Q_j} |\nabla \tilde  b_j| \le C |Q_j|
	\alpha.
\end{equation}
For the corresponding estimate for $|b_j|$ we use again the Poincar\'e
inequality for $\tilde f - \tilde f_{Q_j}$ on $Q_j$ to obtain for all $q \in
{[1, \infty[}$
\begin{equation} \label{e-b_jwithq}
  \int_\Omega |b_j|^q \le \int_{Q_j} |\tilde b_j|^q = \int_{Q_j} | \tilde f -
	\tilde f_{Q_j} |^q |\varphi_j|^q \le C \int_{Q_j} |\tilde f -
	\tilde f_{Q_j}|^q \le C \int_{Q_j} |\nabla \tilde f|^q.
\end{equation}
Note that the factor $\mathrm{diam}(Q_j)$ from the Poincar\'e inequality is
bounded uniformly in $j$, since all $Q_j$ are contained in $E$ and $E$ has
finite measure.

Proceeding as in \eqref{e-(3).1} and \eqref{e-nablabj1}, we find, specialising
to $q = 1$,
\begin{equation} \label{e-b_j}
\int_\Omega |b_j| \le C |Q_j| \alpha.
\end{equation}

For the third term $|b_j|/\d_D$ we note that on a usual cube $Q_j$ we have
$\d_D \ge \ell_j$. Thus we get as before by the Poincar\'e inequality
\[ \int_\Omega \frac{|b_j|}{\d_D} \le \int_{Q_j} \frac{|\tilde b_j|}{\d_D} \le
	\frac{C}{\ell_j} \int_{Q_j} |\tilde f - \tilde f_{Q_j}| \le C
	\int_{Q_j} |\nabla \tilde f|
\]
and we can again conclude as in \eqref{e-(3).1} and \eqref{e-nablabj1}.

So, we turn to the proof of the estimate in \ref{CZZ(3)} for the case of a
special cube. Then $b_j = (\tilde f \varphi_j)|_\Omega$, and we get with the
help of d)
\[ |\nabla \tilde b_j| \le |\nabla \tilde f||\varphi_j| + |\tilde f |
	|\nabla \varphi_j| \le C \Bigl( |\nabla \tilde f| +
	\frac{|\tilde f|}{\ell_j} \Bigr).
\]
Since $Q_j$ is a special cube, we get for every $\mathrm x \in Q_j$
\begin{equation}\label{e-specialcube}
  \d_D(\mathrm x) = \d(\mathrm x, D) \le \mathrm{diam}(Q_j) + \d(Q_j, D) \le C
	\ell_j + \ell_j \le C \ell_j
\end{equation}
and this in turn yields
\begin{equation} \label{e-nablabjspecial}
  |\nabla \tilde b_j| \le C \Bigl( |\nabla \tilde f| + \frac{|\tilde f|}{d_D}
	\Bigr). 
\end{equation}
Since, obviously
\begin{equation} \label{e-b_jspecial}
  |\tilde b_j| = |\tilde f \varphi_j| \le C |\tilde f| \qquad \text{and}
	\qquad \frac{|\tilde b_j|}{d_D} = \frac{|\tilde f \varphi_j|}{\d_D}
	\le C \frac{|\tilde f|}{\d_D} 
\end{equation}
hold, we find by one more repetition of the arguments in \eqref{e-(3).1} and
\eqref{e-nablabj1} with some $\mathrm z \in \tilde c Q_j \cap F$
\begin{align}
 \int_\Omega \Bigl( |b_j| + |\nabla b_j| + \frac{|b_j|}{\d_D} \Bigr) &\le
	C \int_{Q_j} \Bigl( |\tilde f| + |\nabla \tilde f| +
	\frac{|\tilde f|}{\d_D} \Bigr) \nonumber \\
  &\le \frac{C|Q_j|}{|\tilde cQ_j|} \int_{\tilde c Q_j} \Bigl( |\tilde f| +
	|\nabla \tilde f| + \frac{|\tilde f|}{\d_D} \Bigr) \le C |Q_j| \alpha.
	\label{e-bjsp2}
\end{align}

\subsection*{Step 4: Proof of \ref{CZZ(2)}: Estimate of $|g|$ and $|g|/\d_D$}

The asserted bound for $|g|$ and $|g|/\d_D$ is rather easy to obtain on $F
\cap \Omega$, since on $F$ all functions $\tilde b_j$, $j \in I$, vanish, which
means $\tilde g = \tilde f$ on $F$. This implies for almost all $\mathrm x \in
F \cap \Omega$ by the definition of $F$
\[ |g(\mathrm x)| + \frac{|g(\mathrm x)|}{\d_D(\mathrm x)} =
	|\tilde f(\mathrm x)| + \frac{|\tilde f(\mathrm x)|}{\d_D(\mathrm x)}
	\le \biggl[ M \Bigl( |\nabla \tilde f| + |\tilde f| +
	\frac{|\tilde f|}{\d_D} \Bigr) \biggr](\mathrm x) \le
	\alpha.
\]
So, for the estimate of these two terms we concentrate on the case $\mathrm x
\in E$.
Setting $I_u := \{ j \in I : Q_j \text{ usual} \}$ and $I_s := \{ j \in I :
Q_j \text{ special} \}$, we obtain on $E$
\begin{align*}
  \tilde g &= \tilde f - \sum_{j \in I_u} \tilde b_j - \sum_{j \in I_s}
	\tilde b_j = \tilde f - \sum_{j \in I_u} ( \tilde f - \tilde f_{Q_j} )
	\varphi_j - \sum_{j \in I_s} \tilde f \varphi_j = \tilde f - \tilde f
	\sum_{j \in I} \varphi_j + \sum_{j \in I_u} \tilde f_{Q_j} \varphi_j \\
  &= \tilde f {\mathbf 1}_F + \sum_{j \in I_u} \tilde f_{Q_j} \varphi_j =
	\sum_{j \in I_u} \tilde f_{Q_j} \varphi_j.
\end{align*}
Now, we fix some $\mathrm x \in E$. Let $I(\mathrm x) := \{ j \in I : \mathrm x
\in \supp(\varphi_j) \}$, $I_{u,\mathrm x} := I_u \cap I(\mathrm x)$ and
$I_{s,\mathrm x} := I_s \cap I(\mathrm x)$. Then the above estimate yields
together with d)
\begin{align}
  |\tilde g(\mathrm x)| &\le \sum_{j \in I_u} |\tilde f_{Q_j}|
	|\varphi_j(\mathrm x)| \le C \sum_{j \in I_{u, \mathrm x}}
	|\tilde f_{Q_j}| = C \sum_{j \in I_{u, \mathrm x}} \frac{1}{|Q_j|}
	\Bigl| \int_{Q_j} \tilde f(\mathrm y) \; \dd \mathrm y \Bigr| \nonumber
	\\
  &\le C \sum_{j \in I_{u, \mathrm x}} \frac{1}{|Q_j|} \int_{Q_j}
	|\tilde f(\mathrm y)| \; \dd \mathrm y.
	\label{e-|g|}
\end{align}
Picking again some $\mathrm z_j \in \tilde c Q_j \cap F$, $j \in I$, this
yields with the argument that we used already several times and since
$I_{u, \mathrm x}$ is finite
\[ 
|\tilde g(\mathrm x)| \le C \sum_{j \in I_{u, \mathrm x}}
	\frac{1}{|\tilde c Q_j|} \int_{\tilde c Q_j} |\tilde f(\mathrm y)|
	\; \dd \mathrm y \le C \sum_{j \in I_{u, \mathrm x}} \bigl[ M(
	|\tilde f|) \bigr](\mathrm z_j) \le C \sum_{j \in I_{u, \mathrm x}}
	\alpha \le C \alpha.
\]
In order to estimate $\tilde g/\d_D$ on $E$, we estimate as in \eqref{e-|g|}
for $\mathrm x \in E$
\[ \frac{|\tilde g(\mathrm x)|}{\d_D(\mathrm x)} =
	\frac{\bigl| \sum_{j \in I_u} \tilde f_{Q_j} \varphi_j(\mathrm x)
	\bigl|}
	{\d_D(\mathrm x)} \le C \sum_{j \in I_{u, \mathrm x}}
	\frac{|\tilde f_{Q_j}|}{\d_D(\mathrm x)} \le C
	\sum_{j \in I_{u, \mathrm x}} \frac{1}{|Q_j|} \int_{Q_j}
	\frac{|\tilde f(\mathrm y)|}{\d_D(\mathrm x)} \; \dd \mathrm y.
\]
Every cube in this sum is a usual one, so $\d(Q_j, D) \ge \ell_j$.
Furthermore, we have $\mathrm x \in Q_j$ for all $j \in I_{u, \mathrm x}$ by
construction. This means that for every $j \in I_{u, \mathrm x}$ and all
$\mathrm y \in Q_j$ the distance between $\mathrm x$ and $\mathrm y$ is less
than $C \ell_j$ for some constant $C$ depending only on the dimension. Thus
\[ \d_D(\mathrm y) = \d(\mathrm y, D) \le \d(\mathrm y, \mathrm x) +
	\d(\mathrm x, D) \le C \ell_j + \d_D(\mathrm x) \le C \d(Q_j, D) +
	\d_D(\mathrm x) \le C \d_D(\mathrm x).
\]
Consequently, we get for some $\mathrm z_j \in \tilde c Q_j \cap F$ as before
\begin{align*}
  \frac{|g(\mathrm x)|}{\d_D(\mathrm x)} &\le C \sum_{j \in I_{u, \mathrm x}}
	\frac{1}{|Q_j|} \int_{Q_j}
	\frac{|\tilde f(\mathrm y)|}{\d_D(\mathrm y)} \; \dd \mathrm y \le
	C \sum_{j \in I_{u,\mathrm x}} \frac{1}{|\tilde c Q_j|}
	\int_{\tilde c Q_j} \frac{|\tilde f(\mathrm y)|}{\d_D(\mathrm y)}
	\; \dd \mathrm y \\
  &\le C \sum_{j \in I_{u,\mathrm x}} \bigl[ M(|\tilde f|/\d_D) \bigr]
	(\mathrm z_j) \le C \alpha.
\end{align*}

\subsection*{Step 5: Proof of \ref{CZZ(2)}: Estimate of $|\nabla g|$}

In order to estimate $|\nabla g|$, it is not sufficient to know that
$\sum_{j \in I} \tilde b_j$ converges pointwise as before. At least we have to
know some convergence in the sense of distributions to push the gradient
through the sum. Let $J \subseteq I$ be finite. Then we have, due to
\eqref{e-b_j} for usual cubes and \eqref{e-bjsp2} for special cubes
\[ \Bigl\| \sum_{j \in J} |\tilde b_j| \Bigr\|_{L^1(\R^d)} = \int_{\R^d}
	\sum_{j \in J} |\tilde b_j| = \sum_{j \in J} \int_{Q_j} |\tilde b_j|
	\le C \alpha \sum_{j \in J} |Q_j|
\]
with a constant $C$ that is independent of the choice of $J$. Since
$\sum_{j \in I} |Q_j|$ is convergent due to \ref{CZZ(4)}, this implies that
$\sum_{j \in I} |\tilde b_j|$ is a Cauchy sequence in $L^1(\R^d)$.

In particular $\sum_{j \in I} \tilde b_j$ converges in the sense of
distributions, so we get $\nabla \sum_{j\in I} \tilde b_j = \sum_{j \in I}
\nabla \tilde b_j$ in the sense of distributions.

In a next step we show that the sum $\sum_{j \in I} \nabla \tilde b_j$
converges absolutely in $L^1$. Investing the estimates in \eqref{e-nablabj}
and \eqref{e-nablabjspecial}, respectively, we find

\[ \int_{Q_j} |\nabla \tilde b_j| \le C \int_{Q_j} \Bigl( |\nabla \tilde f| +
	\frac{|\tilde f|}{\d_D} \Bigr).
\]
Thus, we obtain by \ref{CZZ(5)} and the fact that $E$ has finite measure,
cf.\@ \eqref{e-|E|},
\begin{align*}
  \sum_{j \in I} \| \nabla \tilde b_j \|_{L^1(\R^d)} &= \sum_{j \in I} \|
	\nabla \tilde b_j \|_{L^1(Q_j)} \le C \sum_{j \in I} \int_{Q_j} \Bigl(
	|\nabla \tilde f| + \frac{|\tilde f|}{\d_D} \Bigr) = C \int_E
	\sum_{j \in I} \mathbf{1}_{Q_j} \Bigl( |\nabla \tilde f| +
	\frac{|\tilde f|}{\d_D} \Bigr) \\
  &\le C \Bigl\| |\nabla \tilde f| + \frac{|\tilde f|}{\d_D}
	\Bigr\|_{L^1(E)} \le C \Bigl\| |\nabla \tilde f| +
	\frac{|\tilde f|}{\d_D} \Bigr\|_{L^p(E)} \le \bigl\| \nabla \tilde f
	\bigr\|_{L^p(\R^d)}+ \Bigl\| \frac{\tilde f}{\d_D} \Bigr\|_{L^p(\R^d)}.
\end{align*}
Now, by Hardy's inequality~\eqref{e-TildeHardy} this last expression is finite
and this yields the desired absolute convergence.

This allows us to calculate
\[ \nabla \tilde g = \nabla \tilde f - \sum_{j \in I} \nabla \tilde b_j =
	\nabla \tilde f - \sum_{j \in I_u} \bigl( \nabla \tilde f \varphi_j +
	(\tilde f - \tilde f_{Q_j}) \nabla \varphi_j \bigr) - \sum_{j \in I_s}
	\bigl( \nabla \tilde f \varphi_j + \tilde f \nabla \varphi_j \bigr).
\]
Note that the above considerations concerning the convergence of
$\sum_{j \in I} \nabla \tilde b_j$ also yield that the sums over
$\nabla \tilde f \varphi_j$, $(\tilde f - \tilde f_{Q_j}) \nabla \varphi_j$ and
$\tilde f \nabla \varphi_j$ are absolutely convergent in $L^1$, so
\[ \nabla \tilde g = \nabla \tilde f - \sum_{j \in I} \nabla \tilde f
	\varphi_j - \sum_{j \in I_u} (\tilde f - \tilde f_{Q_j})
	\nabla \varphi_j - \sum_{j \in I_s} \tilde f \nabla \varphi_j =
	\nabla \tilde f \mathbf{1}_F - \sum_{j \in I_u} (\tilde f -
	\tilde f_{Q_j}) \nabla \varphi_j - \sum_{j \in I_s} \tilde f
	\nabla \varphi_j.
\]
On $F$ we know that every summand in the above two sums vanishes, so by the
$L^1$-convergence shown above we see $\nabla \tilde g = \nabla \tilde f$ on
$F$. Thus on $F$ we easily get the desired $L^\infty$-estimate for
$\nabla \tilde g$, since for almost all $\mathrm x \in F$
\[ |\nabla \tilde g(\mathrm x)| = |\nabla \tilde f(\mathrm x)| \le
	M(|\nabla \tilde f|)(\mathrm x) \le M \bigl( |\nabla \tilde f| +
	|\tilde f| + |\tilde f|/\dd_D \bigr)(\mathrm x) \le \alpha.
\]
So, we concentrate on $\mathrm x \in E$. Since $E$ is open all sums in
\[ \nabla \tilde g(\mathrm{x}) = - \sum_{j \in I_u} \bigl( \tilde f(\mathrm x)
	- \tilde f_{Q_j} \bigr) \nabla \varphi_j(\mathrm x) - \sum_{j \in I_s}
	\tilde f(\mathrm x) \nabla \varphi_j(\mathrm x)
\]
are finite thanks to \ref{CZZ(5)} and $\sum_{j \in I} \varphi_j$ is constantly
$1$ in a neighbourhood of $\mathrm x$. Thus, we may calculate for $\mathrm x
\in E$
\[ \nabla \tilde g(\mathrm x) = \sum_{j \in I_u} \tilde f_{Q_j} \nabla
	\varphi_j(\mathrm x) - \tilde f(\mathrm x) \sum_{j \in I}
	\nabla \varphi_j(\mathrm x) = \sum_{j \in I_u} \tilde f_{Q_j}
	\nabla \varphi_j(\mathrm x).
\]
We set on $E$
\[ h_u := \sum_{j \in I_u} \tilde f_{Q_j} \nabla \varphi_j \qquad \text{and}
 	\qquad h_s := \sum_{j \in I_s} \tilde f_{Q_j} \nabla \varphi_j
\]
and we will show in the following the estimates $|h_s(\mathrm x)| \le C
\alpha$ and $|h_u(\mathrm x) + h_s(\mathrm x)| \le C \alpha$ for all $\mathrm x
\in E$. Then we have the same bound for $h_u$ and hence also for
$\nabla \tilde g$ on $E$.

In order to show the desired estimate for $h_s$, we recall that by
\eqref{e-specialcube} we have $\dd_D(\mathrm y) \le C \ell_j$ for all
$\mathrm y$ in a special cube $Q_j$. Using d) and this estimate we find for
all $\mathrm x \in E$
\begin{align*}
 |h_s(\mathrm x)| &\le \sum_{j \in I_s} |\tilde f_{Q_j}|
	|\nabla \varphi_j(\mathrm x)| \le \sum_{j \in I_{s, \mathrm x}}
	\frac{C}{\ell_j}|\tilde f_{Q_j}| \le C \sum_{j \in I_{s, \mathrm x}}
	\frac{1}{|Q_j|} \int_{Q_j} \frac{|\tilde f(\mathrm y)|}{\ell_j}
	\; \d \mathrm y \\
  & \le C \sum_{j \in I_{s, \mathrm x}} \frac{1}{|Q_j|} \int_{Q_j}
	\frac{|\tilde f(\mathrm y)|}{\d_D(\mathrm y)} \; \d \mathrm y.
\end{align*}
Now, we use again that the above sum is finite, uniformly in $\mathrm x$, so
it suffices to estimate each addend by $C \alpha$. In order to do so, we once
more bring into play the maximal operator in some point $\mathrm z_j \in
\tilde c Q_j \cap F$:
\[ \frac{1}{|Q_j|} \int_{Q_j} \frac{|\tilde f(\mathrm y)|}{\d_D(\mathrm y)}
	\; \d \mathrm y \le C \frac{1}{|\tilde c Q_j|} \int_{\tilde c Q_j}
	\frac{|\tilde f(\mathrm y)|}{\d_D(\mathrm y)} \; \d \mathrm y
	\le C M \bigl( |\nabla \tilde f| + |\tilde f| + |\tilde f|/\dd_D \bigr)
	(\mathrm z_j) \le C \alpha.
\]
We turn to the estimate of $h_u + h_s$. Let $\mathrm x \in E$ and choose some
$i_0 \in I(\mathrm x)$. Then for every $j \in I(\mathrm x)$ we have $\mathrm x
\in Q_j \cap Q_{i_0}$, so by property (v) of the Whitney cubes, the sidelengths
$\ell_j$ and $\ell_{i_0}$ are comparable with uniform constants. Thus we can
choose some $\kappa \ge \tilde c$, such that $\kappa Q_{i_0} \supseteq Q_j$ for
all $j \in I(\mathrm x)$. Since $\sum_{j \in I} \nabla \varphi_j(\mathrm x) =
0$, one finds
\[ (h_u + h_s) (\mathrm x) = \sum_{j \in I} \tilde f_{Q_j}
	\nabla \varphi_j(\mathrm x) = \sum_{j \in I} (\tilde f_{Q_j} -
	\tilde f_{\kappa Q_{i_0}}) \nabla \varphi_j(\mathrm x).
\]
This implies thanks to d)
\[ \bigl| (h_u + h_s) (\mathrm x) \bigr| \le \sum_{j \in I} |\tilde f_{Q_j} -
	\tilde f_{\kappa Q_{i_0}} |
	|\nabla \varphi_j(\mathrm x)| \le \sum_{j \in I(\mathrm x)}
	\frac{C}{\ell_j} |\tilde f_{Q_j} -
	\tilde f_{\kappa Q_{i_0}} |.
\]
For every $j \in I(\mathrm x)$ we have
\begin{align*}
  |\tilde f_{Q_j} - \tilde f_{\kappa Q_{i_0}}| &= \Bigl|
	\frac{1}{|Q_j|} \int_{Q_j} \tilde f(\mathrm y) \; \d \mathrm y -
	\tilde f_{\kappa Q_{i_0}} \Bigr| = \Bigl|
	\frac{1}{|Q_j|} \int_{Q_j} \bigl( \tilde f(\mathrm y) -
	\tilde f_{\kappa Q_{i_0}} \bigr) \; \d \mathrm y
	\Bigr| \\
  &\le \frac{1}{|Q_j|} \int_{Q_j} \bigl| \tilde f(\mathrm y) -
	\tilde f_{\kappa Q_{i_0}} \bigr| \; \d \mathrm y \le C
	\frac{1}{|\kappa Q_{i_0}|}
	\int_{\kappa Q_{i_0}} \bigl| \tilde f(\mathrm y)
	- \tilde f_{\kappa Q_{i_0}} \bigr| \; \d \mathrm y,
  \intertext{since $Q_j$ and $\kappa Q_{i_0}$ are of comparable
	size and $Q_j \subseteq \kappa Q_{i_0}$. Applying the Poincar\'e
	inequality on $\kappa Q_{i_0}$, we further estimate by}
  &\le C \kappa \ell_{i_0} \frac{1}{|\kappa Q_{i_0}|}
	\int_{\kappa Q_{i_0}} \bigl| \nabla \tilde f(\mathrm y) \bigr|
	\; \dd \mathrm{y} \le C \ell_j
	\frac{1}{|\kappa Q_{i_0}|}
	\int_{\kappa Q_{i_0}} \bigl| \nabla \tilde f(\mathrm y)
	\bigr| \; \d \mathrm y.
  \intertext{Since $\kappa \ge \tilde c$, there is again some point $\mathrm z
	\in \kappa Q_{i_0} \cap F$ and we may continue as above}
  &\le C \ell_j M \bigl( |\nabla \tilde f| + |\tilde f| + |\tilde f|/\d_D
	\bigr)(\mathrm z) \le C \ell_j \alpha.
\end{align*}
Putting everything together and investing that $I(\mathrm x)$ is
uniformly finite for every $\mathrm x \in E$, we have achieved
\[ |\nabla \tilde g(\mathrm x)| \le \bigl| h_s(\mathrm x) \bigr| + \bigl| (h_u
	+ h_s) (\mathrm x) \bigr| \le C \alpha
\]
and have thus proved \ref{CZZ(2)}.

\subsection*{Step 6: Proof of \ref{CZZ(6)}}
We first estimate
\[ \| g \|_{W^{1,p}_D} \le \| \tilde g \|_{W^{1,p}_D(\R^d)} =  \Bigl\|
	\tilde f - \sum_{j \in I} \tilde b_j \Bigr\|_{W^{1,p}_D(\R^d)} \le
	\| \tilde f \|_{W^{1,p}_D(\R^d)} + \Bigl\| \sum_{j \in I} \tilde b_j
	\Bigr\|_{W^{1,p}_D(\R^d)}.
\]
By the continuity of the extension operator we have $\| \tilde f
\|_{W^{1,p}_D(\R^d)} \le C \| f \|_{W^{1,p}_D}$, so we only have to estimate
the sum of the $\tilde b_j$, $j \in I$.

Here we again rely on \ref{CZZ(5)} and the equivalence of norms in $\R^N$ to
obtain
\begin{equation} \label{e-6.1}
  \Bigl\| \sum_{j \in I} \tilde b_j \Bigr\|_{L^p(\R^d)}^p = \int_{\R^d} \Bigl|
	\sum_{j \in I} \tilde b_j \Bigr|^p \le \int_{\R^d} \Bigl(
	\sum_{j \in I} |\tilde b_j| \Bigr)^p \le C \int_{\R^d} \sum_{j \in I}
	|\tilde b_j|^p = C \sum_{j \in I} \int_{Q_j} |\tilde b_j|^p.
\end{equation}
Investing the estimates in \eqref{e-b_jwithq} for $q = p$ and in
\eqref{e-b_jspecial} for usual and special cubes, respectively, we find
\begin{equation} \label{e-6.2}
  \int_{Q_j} | \tilde b_j|^p \le C \int_{Q_j} \bigl( |\tilde f|^p +
	|\nabla \tilde f|^p \bigr).
\end{equation}
Combining the two last estimates we thus have with the help of \ref{CZZ(5)}
\[ \Bigl\| \sum_{j \in I} \tilde b_j \Bigr\|_{L^p(\R^d)}^p \le C
	\sum_{j \in I} \int_{Q_j} \bigl( |\tilde f|^p  + |\nabla \tilde f|^p
	\bigr) \le C \int_{\R^d} \sum_{j \in I} \mathbf{1}_{Q_j} \bigl(
	|\tilde f|^p + |\nabla \tilde f|^p \bigr) \le C
	\| \tilde f \|_{W^{1,p}_D(\R^d)}.
\]
For the estimate of the gradient,
we argue as in \eqref{e-6.1} and \eqref{e-6.2}, in order to find thanks to the
estimates in \eqref{e-nablabj} for $q = p$ and \eqref{e-nablabjspecial}
\[ \Bigl\| \sum_{j \in I} \nabla \tilde b_j \Bigr\|_{L^p(\R^d)}^p \le C
	\sum_{j \in I} \int_{Q_j} |\nabla \tilde b_j|^p \le C \sum_{j \in I}
	\int_{Q_j} \Bigl( |\nabla \tilde f|^p + \frac{|\tilde f|^p}{\d_D^p}
	\Bigr).
\]
Investing again \ref{CZZ(5)} and the Hardy inequality in \eqref{e-TildeHardy},
we end up with
\[ \Bigl\| \sum_{j \in I} \nabla \tilde b_j \Bigr\|_{L^p(\R^d)}^p \le C
	\int_{\R^d} \Bigl( |\nabla \tilde f|^p + \frac{|\tilde f|^p}{\d_D^p}
	\Bigr) \le C \int_{\R^d} |\nabla \tilde f|^p \le \| \tilde f
	\|_{W^{1,p}_D(\R^d)}
\]
and this finishes the proof, thanks to $\| \tilde f \|_{W^{1,p}_D(\R^d)} \le C
\| f \|_{W^{1,p}_D}$.

Having the Calder\'on-Zygmund decomposition at hand, we can now show that it
really respects the boundary condition on $D$.

\begin{corollary} \label{cor-CZZ}
Let $p \in {]1, \infty[}$ and $f \in W^{1,p}_D$ be given. The functions $g$ and
$b = \sum_{j \in I} b_j$ from Lemma~\ref{CZZ} have the following properties:
\begin{enumerate}
 \item \label{cor-CZZ:i} $b \in W^{1,1}_D$ with $\|b\|_{W^{1,1}} \le C
	\alpha^{1-p} \| f \|_{W^{1,p}_D}^p$,
 \item \label{cor-CZZ:ii} $g \in W^{1, \infty}_D$ with $\|g\|_{W^{1, \infty}_D}
	\le C \alpha$,
 \item \label{cor-CZZ:iii} If $f \in W^{1,2}_D$, then also $g,b \in W^{1,2}_D$.
\end{enumerate}
\end{corollary}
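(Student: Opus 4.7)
For \ref{cor-CZZ:i}, each $b_j$ already lies in $W^{1,1}_D$ by Lemma~\ref{CZZ}\ref{CZZ(3)}. Summing the per-cube bounds in \ref{CZZ(3)} and invoking \ref{CZZ(4)} yields
\[
  \sum_{j\in I}\|b_j\|_{W^{1,1}(\Omega)} \;\le\; N\alpha\sum_{j\in I}|Q_j| \;\le\; N^{2}\alpha^{1-p}\|f\|_{W^{1,p}_D}^{p},
\]
so $\sum_j b_j$ converges absolutely in $W^{1,1}$; the partial sums lie in $W^{1,1}_D$, and this subspace is closed in $W^{1,1}$, so $b\in W^{1,1}_D$ with the asserted bound.

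For \ref{cor-CZZ:ii}, the extension $\tilde g$ constructed in the proof of Lemma~\ref{CZZ} satisfies $\|\tilde g\|_{L^\infty(\R^d)}+\|\nabla\tilde g\|_{L^\infty(\R^d)}\le N\alpha$ by \ref{CZZ(2)}, so it admits a Lipschitz representative on $\R^d$ (convexity). The remaining point is that this representative vanishes on $D$, for which I would exploit the companion bound $|g(\mathrm x)|\le N\alpha\,\d_D(\mathrm x)$ for a.e.\ $\mathrm x\in\Omega$ also furnished by \ref{CZZ(2)}: picking Lebesgue points $\mathrm x_n\in\Omega$ that converge to any given $\mathrm x\in D\subseteq\partial\Omega$, continuity of the Lipschitz representative together with $|\tilde g(\mathrm x_n)|\le N\alpha\,\d_D(\mathrm x_n)\to 0$ forces $\tilde g(\mathrm x)=0$. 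By \eqref{e-LipRaum} this places $g$ in $W^{1,\infty}_D$ with $\|g\|_{W^{1,\infty}_D}\le C\alpha$.

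Part \ref{cor-CZZ:iii} is the main obstacle. Assuming in addition $f\in W^{1,2}_D$, my first step would be to verify $b_j\in W^{1,2}_D$ for each $j$: on a usual cube $\tilde b_j$ has compact support in $\R^d\setminus D$, so a mollification argument puts $b_j$ into $W^{1,2}_D$ at once; on a special cube $b_j=(\tilde f\varphi_j)|_\Omega$, and one repeats the approximation of $\tilde f$ by elements of $C^\infty_c(\R^d\setminus D)$, now in the $W^{1,2}(\R^d)$-norm, used in the special-cube case of Step~2 of the proof of Lemma~\ref{CZZ}. To pass to the full sum in $W^{1,2}$, the bounded overlap \ref{CZZ(5)} coupled with Cauchy--Schwarz gives
\[
  \Bigl\|\sum_j\nabla\tilde b_j\Bigr\|_{L^2(\R^d)}^{2} \;\le\; N\sum_j\|\nabla\tilde b_j\|_{L^2(\R^d)}^{2};
\]
the per-cube estimates \eqref{e-nablabj} and \eqref{e-nablabjspecial} applied with $q=2$, followed by a second application of bounded overlap, then yield
\[
  \sum_j\|\nabla\tilde b_j\|_{L^2(\R^d)}^{2} \;\le\; C\bigl\||\nabla\tilde f|+|\tilde f|/\d_D\bigr\|_{L^2(\R^d)}^{2},
\]
and Hardy's inequality \eqref{e-TildeHardy} absorbs $|\tilde f|/\d_D$ into $\|\nabla\tilde f\|_{L^2}$. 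The analogous $L^2$-estimate for $\sum_j\tilde b_j$ follows along the same lines from \eqref{e-b_jwithq} and \eqref{e-b_jspecial}. Thus the partial sums are Cauchy in the closed subspace $W^{1,2}_D$ of $W^{1,2}$, giving $b\in W^{1,2}_D$; finally $g=f-b\in W^{1,2}_D$.
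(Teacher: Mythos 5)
Your argument is correct, but it diverges from the paper's proof in two places, most notably in part \ref{cor-CZZ:iii}. For the vanishing of $g$ on $D$ in \ref{cor-CZZ:ii}, the paper argues via traces: $\tilde f$ and $\tilde b$ lie in $W^{1,1}_D(\R^d)$ and hence have zero trace $\mathcal H_{d-1}$-a.e.\ on $D$, and the Ahlfors--David condition \eqref{e-ahlf} combined with continuity upgrades this to every point of $D$; your route through the bound $\|g/\d_D\|_{L^\infty}\le N\alpha$ from \ref{CZZ(2)} and a sequence of Lebesgue points approaching $D$ is more elementary and avoids trace theory altogether, at the price of using that extra piece of information from \ref{CZZ(2)}. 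For \ref{cor-CZZ:iii}, which you call the main obstacle, the paper disposes of it in one line: by \ref{cor-CZZ:ii} and Lemma~\ref{inclusion} one has $g\in W^{1,\infty}_D\hookrightarrow W^{1,2}_D$, whence $b=f-g\in W^{1,2}_D$. Your direct $L^2$-summation of the $\tilde b_j$ (bounded overlap, Cauchy--Schwarz, the $q=2$ per-cube estimates and Hardy's inequality) is valid -- for the convergence of the partial sums one should note that the overlap/Cauchy--Schwarz inequality applies equally to tails of the series, so square-summability of the $\|\nabla\tilde b_j\|_{L^2}$ does give the Cauchy property -- and it yields a quantitative $W^{1,2}$-bound on $b$ that the paper's argument does not, but it is considerably more work than necessary for the stated claim.
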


\begin{proof}
\begin{enumerate}
 \item Thanks to \ref{CZZ(3)} in Lemma~\ref{CZZ} we have $b_j \in
	W^{1,1}_D(\Omega)$ for all $j \in I$. Moreover, by the estimates in
	\ref{CZZ(3)} and \ref{CZZ(4)} of the same lemma,
	\begin{equation} \label{e-b_jW11}
	 \sum_{j \in I} \|b_j \|_{W^{1,1}} \le C \alpha \sum_{j \in I} |Q_j|
		\le C \alpha^{1 - p} \|f\|_{W^{1,p}_D}^p < \infty.
	\end{equation}
	Thus, the sum in $b$ is absolutely convergent in $W^{1,1}$, which means
	that $b$ satisfies the asserted norm estimate and lies in the closed
	subspace $W^{1,1}_D$. Thus, we have achieved \ref{cor-CZZ:i}.
 \item We first show that $\tilde g$ has a Lipschitz continuous representative
	and that the Lipschitz constant is controlled by $C\alpha$. From
	the proof of Lemma~\ref{CZZ} we have $\tilde{g} \in W^{1,p}(\R^d)$ for
	all $1 \le p < \infty$. So, from \cite[Section~2]{hajlasz4} we can
	infer that for almost all $\mathrm{x}, \mathrm{y} \in
	\R^d$
	\[ \bigl|\tilde g (\mathrm{x}) - \tilde g (\mathrm{y}) \bigr| \le C
		|\mathrm{x} - \mathrm{y}| \left( \bigl( M ( |\nabla \tilde{g}
		|^p ) \bigr)^{\frac 1p} (\mathrm{x}) + \bigl( M (
		|\nabla \tilde g|^p ) \bigr)^{\frac 1p} (\mathrm{y}) \right).
	\]
	The Hardy-Littlewood maximal operator is bounded on $L^\infty(\R^d)$,
	so this implies
	\[ \sup_{\mathrm{x}, \mathrm{y} \in \R^d, \mathrm{x} \ne \mathrm{y}}
		\frac{|\tilde g(\mathrm{x}) - \tilde g(\mathrm{y})|}
		{|\mathrm{x} - \mathrm{y}|} \le C \| \nabla \tilde g
		\|_{L^\infty(\R^d)} \le C \alpha
	\]
	and we find $\tilde g \in W^{1, \infty}(\R^d) = (L^\infty \cap
	\mathrm{Lip})(\R^d)$.

	It remains to prove the right boundary behaviour of $\tilde g$,
	i.e.\@ $\tilde g|_D = 0$. Then by the Definition of
	$W^{1,\infty}_D$, cf.\@ \eqref{e-LipRaum}, we find $g =
	\tilde g|_\Omega \in W^{1,\infty}_D$. Since $\tilde f,
	\tilde b \in W^{1,1}_D(\R^n)$, these two functions have zero trace on
	$D$ $\mathcal H_{d-1}$-almost everywhere, so the same
	is true for $\tilde g$ and we only have to get rid of the ``almost
	everywhere''. Let $\mathrm{x} \in D$ be given. Then for every $\eps >
	0$, by the Ahlfors-David condition~\eqref{e-ahlf}, we have
	$\sigma(B(\mathrm{x}, \eps) \cap D) > 0$, so there must be points in
	this set, where $\tilde g$ vanishes. But this means that $\mathrm{x}$
	is an accumulation point of the set $\{ \mathrm{y} \in D :
	\tilde g(\mathrm{y}) = 0 \}$. By the continuity of
	$\tilde g$ this implies $\tilde g (\mathrm{x}) = 0$.
 \item By \ref{cor-CZZ:ii} and Lemma~\ref{inclusion} we have $g \in
	W^{1, \infty}_D \hookrightarrow W^{1,2}_D$, so with $f$ also $b$ is in
	this space.
 \qedhere
\end{enumerate}
\end{proof}

\section{Real interpolation of the spaces $W^{1,p}_D(\Omega)$}
\label{sec-Interpolation}
%
%
%
%
%
%
%
%
\noindent
In this section we establish interpolation within the set of spaces 
$\{W^{1,p}_D(\Omega)\}_{p \in [1,\infty]}$. There already exist interpolation
results for spaces of this scale which incorporate mixed boundary conditions
(compare \cite{mitrea}, \cite{ggkr}) but -- to our knowledge -- not of the
required generality concerning the Dirichlet part. The key ingredient for this
generalization will be the Calder\'on-Zygmund decomposition proved in
Section~\ref{sec-CZD}.

\subsection{The interpolation result}
The main result of this section is the following.

\begin{theorem} \label{interp}
 Let $\Omega$ and $D$ satisfy Assumption~\ref{assu-general}. 
 Then for all choices of $1 \le p_0 < p < p_1 \le \infty$ we have for
 $\alpha = \frac{(p - p_0)p_1}{(p_1 - p_0)p}$
 \[ W^{1,p}_D(\Omega) = \bigl( W^{1, p_0}_D(\Omega), W^{1, p_1}_D(\Omega)
	\bigr)_{\alpha, p}
 \]
 with equivalent norms.
\end{theorem}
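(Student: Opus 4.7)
The plan is to use the K-method of real interpolation, with the adapted Calder\'on--Zygmund decomposition of Lemma~\ref{CZZ} and Corollary~\ref{cor-CZZ} as the main tool. Write $X_0 := W^{1,p_0}_D$ and $X_1 := W^{1,p_1}_D$. One checks that $\tfrac{1-\alpha}{p_0} + \tfrac{\alpha}{p_1} = \tfrac{1}{p}$, and since $\Omega$ is bounded, $X_1 \hookrightarrow X_0$, so the K-functional
\[
K(t, f) := \inf_{f = f_0 + f_1} \bigl( \|f_0\|_{X_0} + t \|f_1\|_{X_1} \bigr)
\]
is well-defined on $X_0 + X_1 = X_0$, and $\|f\|_{(X_0, X_1)_{\alpha, p}}^p \sim \int_0^\infty (t^{-\alpha} K(t,f))^p \frac{\d t}{t}$.

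\emph{Main inclusion} $W^{1,p}_D \hookrightarrow (X_0, X_1)_{\alpha, p}$: fix $f \in W^{1,p}_D$ and $t > 0$, and apply Lemma~\ref{CZZ} at some parameter $\beta > 0$ (to be optimised depending on $t$), writing $f = g_\beta + b_\beta$. By Corollary~\ref{cor-CZZ}~\ref{cor-CZZ:ii}, Lemma~\ref{inclusion} and the boundedness of $\Omega$, one has $\|g_\beta\|_{X_1} \le C\beta$. For the bad part, I re-enter the proof of Lemma~\ref{CZZ}: the estimates \eqref{e-b_jwithq}, \eqref{e-nablabj}, \eqref{e-b_jspecial} and \eqref{e-nablabjspecial} are valid for every $q \in [1,\infty[$, so applied with $q = p_0 > 1$ and summed over $j$ using the finite-overlap property \ref{CZZ(5)}, they yield
\[
\|b_\beta\|_{X_0}^{p_0} \;\le\; C \int_{E_\beta} H^{p_0} \; \d \mathrm x, \qquad H := |\nabla \tilde f| + |\tilde f| + |\tilde f|/\d_D,
\]
where $E_\beta = \{MH > \beta\}$ is the level set driving the decomposition. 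Hence $K(t, f) \le C \bigl( \int_{E_\beta} H^{p_0} \bigr)^{1/p_0} + C t \beta$ for every $\beta > 0$. A standard optimisation of $\beta = \beta(t)$ together with Fubini's theorem then bounds $\int_0^\infty (t^{-\alpha} K(t, f))^p \frac{\d t}{t}$ by $C \|H\|_{L^p(\R^d)}^p$, and the $L^p$-boundedness of the Hardy--Littlewood maximal function (exploiting $p > 1$), Hardy's inequality (Theorem~\ref{t-hardy}) applied to $|\tilde f|/\d_D$, and the continuity of the extension $\mathfrak E$ of Lemma~\ref{l-extend} give $\|H\|_{L^p(\R^d)} \le C \|f\|_{W^{1,p}_D}$.

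\emph{Reverse inclusion} $(X_0, X_1)_{\alpha, p} \hookrightarrow W^{1,p}_D$: the continuous injections $W^{1,q}_D \hookrightarrow L^q$ and $\nabla \colon W^{1,q}_D \to L^q$ combined with the classical identity $(L^{p_0}, L^{p_1})_{\alpha, p} = L^p$ show that every $f$ in the interpolation space has $f, \nabla f \in L^p$, so $f \in W^{1,p}$. To upgrade to $f \in W^{1,p}_D$ I use that $X_0 \cap X_1 = X_1$ is dense in $(X_0, X_1)_{\alpha, p}$ whenever $\alpha \in (0,1)$ and $p < \infty$: pick a sequence $(f_n) \subset X_1 \hookrightarrow W^{1,p}_D$ converging to $f$ in the interpolation norm; by the $W^{1,p}$-boundedness just established, $(f_n)$ is Cauchy in $W^{1,p}$, so $f$ lies in the $W^{1,p}$-closure of $X_1$, which coincides with $W^{1,p}_D$ since $C^\infty_D \subseteq X_1$.

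\emph{Main obstacle}: the substantive content is the K-functional estimate in the main inclusion. Re-extracting $L^{p_0}$-bounds for $b_\beta$ from the proof of Lemma~\ref{CZZ} (rather than the $L^1$-bound recorded in Corollary~\ref{cor-CZZ}~\ref{cor-CZZ:i}) is essentially mechanical, but it is crucial that Hardy's inequality is already built into the gradient bound \eqref{e-nablabjspecial} on special cubes, without which $\|\nabla b_\beta\|_{L^{p_0}}$ could not be controlled by the ``correct'' quantity $\int_{E_\beta} H^{p_0}$. Once this ingredient is available, the optimisation of $\beta$ against $t$ and the resulting integration follow the pattern already used in \cite{auschmem}.
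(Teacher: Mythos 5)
Your overall strategy coincides with the paper's: the substantive inclusion $W^{1,p}_D\hookrightarrow (W^{1,p_0}_D,W^{1,p_1}_D)_{\alpha,p}$ is obtained from a $K$-functional upper bound produced by the adapted Calder\'on--Zygmund decomposition (with Hardy's inequality built into the controlling function $H$), and the reverse inclusion from the trivial lower bound plus density of $X_0\cap X_1$. Your $L^{p_0}$-rebound of the bad part from \eqref{e-nablabj}, \eqref{e-b_jwithq}, \eqref{e-nablabjspecial}, \eqref{e-b_jspecial} and the finite overlap \ref{CZZ(5)} is sound, and your reverse inclusion is essentially the paper's argument.

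There is, however, a genuine gap in the ``standard optimisation'' step when $p_1<\infty$. Your only control on the good part is $\|g_\beta\|_{X_1}\le C\beta$, i.e.\ an $L^\infty$-type bound multiplied by $|\Omega|^{1/p_1}$. Feeding $K(t,f)\le A(\beta)+Ct\beta$ with $A(\beta)=(\int_{E_\beta}H^{p_0})^{1/p_0}$ into $\int_0^\infty(t^{-\alpha}K(t,f))^p\,\dd t/t$ forces two incompatible scalings of $\beta(t)=(MH)^*(t^\gamma)$: the $A$-term requires $1/\gamma=1/p_0-1/p_1$ (the Holmstedt exponent), while the $t\beta(t)$-term requires $(1-\alpha)/\gamma=1/p$, and these agree only when $p_1=\infty$. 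For finite $p_1$ one gets $\int_0^\infty(u^{1/p-1/p_1}(MH)^*(u))^p\,\dd u/u$, a Lorentz $L^{r,p}$ norm with $r>p$ that is \emph{not} controlled by $\|H\|_{L^p}$ (take $H^*(u)\sim u^{-1/p}(\log(1/u))^{-2/p}$ near $0$). The correct Holmstedt bound would need $\|g_\beta\|_{X_1}^{p_1}\lesssim \beta^{p_1}|E_\beta|+\int_{\{MH\le\beta\}}(MH)^{p_1}$, i.e.\ you must exploit that $g_\beta=f$ on the complement of $E_\beta$, where $MH\le\beta$; you never use this. The paper sidesteps the issue entirely by first proving only the endpoint case $(W^{1,1}_D,W^{1,\infty}_D)_{1-1/p,p}=W^{1,p}_D$ (where your computation is exactly right, with $\gamma=p_0=1$) and then invoking the reiteration theorem to reach general $1\le p_0<p<p_1\le\infty$. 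Your argument becomes complete if you either insert that reduction at the outset, or sharpen the good-part estimate as indicated; as written, the case $p_1<\infty$ is not proved.
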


We recall the following complex reiteration theorem:

\begin{theorem} \label{cwikel}\cite{bergh,cwikel}
For any compatible couple of Banach spaces $(A_0,A_1)$ we have
\[ \bigl[ (A_0,A_1)_{\lambda_0,p_0}, (A_0,A_1)_{\lambda_1,p_1} \bigr]_\alpha
	= (A_0,A_1)_{\beta,p}
\]
for all $\lambda_0, \lambda_1$ and $\alpha$ in $(0,1)$ and all $p_0, p_1$ in
$[1,\infty]$, except for the case $p_0 = p_1 = \infty$. Here $\beta$ and $p$
are given by $\beta = (1-\alpha)\lambda_0 + \alpha \lambda_1$ and $\frac{1}{p}
= \frac{1-\alpha}{p_0} + \frac{\alpha}{p_1}$.
\end{theorem}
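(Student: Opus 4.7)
The plan is to reduce the identity to a known complex interpolation formula for vector-valued weighted sequence spaces via the retract/coretract structure that the real interpolation functor possesses. Concretely, I would invoke the discrete $J$-method: for every $\lambda \in (0,1)$ and $q \in [1,\infty]$, there are bounded operators
\[ S_\lambda^q : (A_0, A_1)_{\lambda, q} \to \ell^q(\mathbb{Z};\,2^{-n\lambda}\Sigma(A_0,A_1)), \qquad T_\lambda^q : \ell^q(\mathbb{Z};\,2^{-n\lambda}\Delta(A_0,A_1)) \to (A_0, A_1)_{\lambda, q}, \]
with $S_\lambda^q$ defined via an optimal K-decomposition and $T_\lambda^q(u_n) = \sum_n u_n$, such that $T_\lambda^q S_\lambda^q$ is bounded and coincides with the identity up to the equivalence of norms. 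The first step is therefore to set up, for the two parameter pairs $(\lambda_0,p_0)$ and $(\lambda_1,p_1)$, a single common pair of sequence spaces $E_0 = \ell^{p_0}(2^{-n\lambda_0} X)$ and $E_1 = \ell^{p_1}(2^{-n\lambda_1} X)$, with $X$ either $\Sigma(A_0,A_1)$ or $\Delta(A_0,A_1)$ as appropriate, in such a way that the two retract diagrams commute simultaneously.

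Next, I would appeal to the stability of complex interpolation under retracts: if the real interpolation spaces $(A_0,A_1)_{\lambda_i,p_i}$ are simultaneously retracts of $E_i$ via compatible $S,T$, then $[(A_0,A_1)_{\lambda_0,p_0}, (A_0,A_1)_{\lambda_1,p_1}]_\alpha$ is a retract of $[E_0, E_1]_\alpha$. The key input is then the classical complex interpolation formula for vector-valued weighted sequence spaces,
\[ \bigl[\ell^{p_0}(w_0;Y),\,\ell^{p_1}(w_1;Y)\bigr]_\alpha = \ell^{p}\bigl(w_0^{1-\alpha}w_1^{\alpha};Y\bigr), \qquad \tfrac{1}{p}=\tfrac{1-\alpha}{p_0}+\tfrac{\alpha}{p_1}, \]
valid whenever $p<\infty$, which is precisely the hypothesis that $p_0$ and $p_1$ are not both infinite. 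Applying this with $w_i(n) = 2^{-n\lambda_i}$ produces the weight $2^{-n\beta}$ with $\beta = (1-\alpha)\lambda_0 + \alpha\lambda_1$, and pulling back through the retract yields the space $(A_0,A_1)_{\beta,p}$.

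The main obstacle is the construction of the compatible retract pair: the natural $S_\lambda^q$ and $T_\lambda^q$ depend on $\lambda$ and $q$, so one cannot just superpose them. A concrete resolution is to work with the unweighted coretraction $S : a \mapsto (a_n)$ coming from a quasi-linearisable decomposition $a = a_0(t) + a_1(t)$ at dyadic scales $t = 2^n$, which is bounded into $\ell^{p_i}(2^{-n\lambda_i}\Sigma)$ for both indices simultaneously, paired with the retraction $T(u_n) = \sum_n u_n$ (which is bounded out of $\ell^{p_i}(2^{-n\lambda_i}\Delta)$ for both $i$). A secondary subtlety is to handle the exclusion $p_0 = p_1 = \infty$ carefully, which is forced by the failure of the vector-valued $\ell^\infty$ interpolation formula above; the density issues required by complex interpolation (passing from $A_0 \cap A_1$ to its closure in the two endpoint real interpolation spaces) must be checked in this step. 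Once these points are in place, the reiteration identity follows by chasing the retract diagram through $[\,\cdot\,,\,\cdot\,]_\alpha$.
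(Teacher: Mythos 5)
The paper does not prove this statement at all: it is quoted from the literature, with the proof delegated to Bergh--L\"ofstr\"om and to Cwikel's paper, so your proposal has to be judged on its own. As it stands it has a genuine gap at its central step, the construction of the ``compatible retract pair''. Complex interpolation transfers only through \emph{linear} operators, so your coretraction $S$ must be linear; but a (near-)optimal $K$-decomposition $a=a_0(t)+a_1(t)$ cannot in general be chosen linearly in $a$. The property you invoke to fix this, quasi-linearizability, is an additional hypothesis on the couple (valid for many concrete couples, e.g.\ via mollifier or truncation-type splittings made linear), not a property of an arbitrary compatible couple -- and the theorem is asserted for \emph{every} compatible couple. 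There is also a structural mismatch in your diagram: your $S$ takes values in $\ell^{p_i}(2^{-n\lambda_i}\Sigma)$ while $T(u_n)=\sum_n u_n$ is only defined (and bounded) on $\ell^{p_i}(2^{-n\lambda_i}\Delta)$, so $TS=\mathrm{id}$ is not even well formed until the decomposition pieces are upgraded to elements of $A_0\cap A_1$ with $J(2^n,u_n)\lesssim K(2^n,a)$. That upgrade is exactly the fundamental lemma of interpolation theory (or, in sharp form, Brudnyi--Krugljak $K$-divisibility), which is a pointwise, nonlinear statement and cannot be fed into the complex functor as a retraction. So your argument proves the identity for quasi-linearizable couples, but not in the stated generality; the general case is precisely why the result carries Cwikel's name.

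A repair along your lines is possible, but it is not a two-sided retract argument: one uses the two linear maps \emph{one-sidedly}. The constant-sequence map $a\mapsto(a)_n$ is linear and bounded from $(A_0,A_1)_{\lambda_i,p_i}$ into $\ell^{p_i}(2^{-n\lambda_i};\,\Sigma_n)$ with $\Sigma_n=A_0+2^nA_1$ (this is just the definition of the $K$-method), and interpolating it with the weighted vector-valued sequence-space formula gives the inclusion $[\,\cdot\,,\,\cdot\,]_\alpha\subseteq(A_0,A_1)_{\beta,p}$; conversely, the summation map $T$ is linear and bounded out of $\ell^{p_i}(2^{-n\lambda_i};\,\Delta_n)$, and for a \emph{given} $a\in(A_0,A_1)_{\beta,p}$ one invokes the (nonlinear) $J$-representation of the equivalence theorem to produce a preimage sequence, or directly builds an analytic function from such a representation. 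Even then, the endpoint cases with $p_0$ or $p_1$ infinite, and the absence of density assumptions on $A_0\cap A_1$, require the additional arguments that distinguish Cwikel's theorem from the textbook version; your proposal acknowledges the density issue but does not resolve it.
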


>From this theorem and our real interpolation Theorem~\ref{interp}, a
complex interpolation result for Sobolev spaces $W^{1,p}_D(\Omega)$ follows.

\begin{corollary} \label{complex1}
 Let $\Omega$ and $D$ satisfy Assumption~\ref{assu-general}.
 For $1 < p_0 < p < p_1 < \infty$ and $\alpha =
 \frac{\frac{1}{p_0} - \frac{1}{p}}{\frac{1}{p_0} - \frac{1}{p_1}} =
 \frac{p_1 (p - p_0)}{p (p_1 - p_0)}$, we have
 \[ \bigl[ W^{1,p_0}_D(\Omega), W^{1,p_1}_D(\Omega) \bigr]_{\alpha} =
	W^{1,p}_D(\Omega).
 \]
\end{corollary}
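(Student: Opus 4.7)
The plan is to deduce the complex interpolation identity as a direct corollary of the real interpolation Theorem~\ref{interp} and the Cwikel reiteration Theorem~\ref{cwikel}. The idea is to realize both endpoint spaces $W^{1,p_0}_D$ and $W^{1,p_1}_D$ as real interpolation spaces between a common outer pair, apply Cwikel, and then identify the result as $W^{1,p}_D$ via Theorem~\ref{interp} again.

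Concretely, I would fix an outer pair $(q_0, q_1)$ with $1 \le q_0 < p_0$ and $p_1 < q_1 \le \infty$; the simplest choice is $q_0 = 1$ and $q_1 = \infty$, which is admissible in Cwikel's theorem since the exclusion $p_0 = p_1 = \infty$ refers to the middle parameters and both $p_0, p_1$ here are finite. By Theorem~\ref{interp} applied twice,
\[
 W^{1,p_0}_D = (W^{1,q_0}_D, W^{1,q_1}_D)_{\lambda_0, p_0},
 \qquad
 W^{1,p_1}_D = (W^{1,q_0}_D, W^{1,q_1}_D)_{\lambda_1, p_1},
\]
with $\lambda_i = \tfrac{(p_i - q_0) q_1}{(q_1 - q_0) p_i}$ for $i = 0, 1$. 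Theorem~\ref{cwikel} then yields
\[
 \bigl[ W^{1,p_0}_D, W^{1,p_1}_D \bigr]_\alpha = (W^{1,q_0}_D, W^{1,q_1}_D)_{\beta, p},
\]
where $\beta = (1-\alpha)\lambda_0 + \alpha \lambda_1$ and $\tfrac{1}{p} = \tfrac{1-\alpha}{p_0} + \tfrac{\alpha}{p_1}$.

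The remaining step is purely arithmetic: I would verify that this $p$ coincides with the given one and that $\beta$ equals $\gamma := \tfrac{(p - q_0) q_1}{(q_1 - q_0) p}$, so that a final appeal to Theorem~\ref{interp} identifies the right-hand side with $W^{1,p}_D$. Using $1 - \alpha = \tfrac{p_0 (p_1 - p)}{p (p_1 - p_0)}$, a short computation gives
\[
 (p_1 - p)(p_0 - q_0) + (p - p_0)(p_1 - q_0) = (p_1 - p_0)(p - q_0),
\]
from which $\beta = \gamma$ follows immediately, and the identity $\tfrac{1}{p} = \tfrac{1-\alpha}{p_0} + \tfrac{\alpha}{p_1}$ is a direct check from the definition of $\alpha$.

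I do not expect a genuine obstacle here: the only mild subtlety is the admissibility condition in Cwikel's reiteration, which forces the outer pair to be chosen so that one does not end up with the forbidden case $p_0 = p_1 = \infty$, and this is automatic under our hypothesis $p_0, p_1< \infty$. All the substantive work has already been absorbed into Theorem~\ref{interp}, whose proof rests on the Calder\'on-Zygmund decomposition of Section~\ref{sec-CZD}.
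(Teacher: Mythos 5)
Your proposal is correct and is exactly the route the paper takes: the paper derives Corollary~\ref{complex1} directly from the real interpolation Theorem~\ref{interp} combined with Cwikel's reiteration Theorem~\ref{cwikel}, and your choice of outer pair $(W^{1,1}_D, W^{1,\infty}_D)$ together with the parameter check $\beta = 1 - 1/p$ is the (unwritten) computation the paper leaves to the reader. Nothing to add.
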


\subsection{The $K$-Method of real interpolation}

The reader can refer to \cite{BS88}, \cite{bergh} for details on the
development of this theory. Here we only recall the essentials to be used in
the sequel. 

Let $A_0$, $A_1$ be two normed vector spaces embedded in a topological
Hausdorff vector space $V$. For each  $a \in A_0 + A_1$ and $t > 0$, we define
the $K$-functional of interpolation by
\[ K(a, t, A_0, A_1) = \inf_{a = a_0 + a_1} \bigl( \|a_{0}\|_{A_0} + t \| a_1
	\|_{A_1} \bigr).
\]
For $0 < \theta < 1$, $1 \leq q \leq \infty$, the real interpolation space
$(A_0, A_1)_{\theta,q}$ between $A_0$ and $A_1$ is given by
\[ (A_0, A_1)_{\theta,q} = \Bigl\{ a \in A_0 + A_1 : \|a\|_{\theta,q} :=
	\Bigl( \int_0^\infty \bigl( t^{-\theta} K(a, t, A_0, A_1) \bigr)^q
	\; \frac{\dd t}{t} \Bigr)^{1/q} < \infty \Bigr\}.
\]
It is an exact interpolation space of exponent $\theta$ between $A_0$ and
$A_1$, see \cite[Chapter~II]{bergh}.


\begin{definition}
 Let $f : X \to \R$ be a measurable function on a measure space $(X,\mu)$. The
 \emph{decreasing rearrangement} of $f$ is the function $f^* : {]0, \infty[}
 \to \R$ defined by
 \[ f^*(t) = \inf \bigl\{ \lambda : \, \mu (\{ x : \,|f(x)| > \lambda \} \le
	t \bigr\}.
 \]
 The \emph{maximal decreasing rearrangement} of $f$ is the function
 $f^{**}$ defined for every $t>0$ by
 \[ f^{**}(t) = \frac{1}{t} \int_0^t f^{*}(s) \; \dd s.
 \]
\end{definition}

\begin{remark} \label{rem-int}
 It is well known that when $X$ satisfies the doubling property, then
 $(Mf)^* \le C f^{**}$, where $M$ is again the Hardy-Littlewood maximal
 operator from \eqref{e-HLMO}. This is an easy consequence of the fact that
 $M$ is of weak type $(1, 1)$ and of strong type $(\infty,\infty)$, see
 \cite[Theorem~3.8,  p.~122]{BS88}, and $\mu (\{ x : \, |f(x)| > f^*(t) \})
 \le t$ for all $t > 0$.

 We refer to \cite{BS88}, \cite{bergh} for other properties of $f^*$ and
 $f^{**}$.
\end{remark}

We conclude  by quoting the following classical result (\cite[p.~109]{bergh}):

\begin{proposition}\label{IK}
 Let $(X,\mu)$ be a measure space with a $\sigma$-finite positive measure
 $\mu$. Let $f \in L^1(X) + L^\infty(X)$. We then have
 \begin{enumerate}
  \item \label{IK:i} $K(f, t, L^1, L^\infty) = t f^{**}(t)$ and
  \item \label{IK:ii} for $1 \le p_0 < p < p_1 \leq \infty$ it holds
	$(L^{p_0}, L^{p_1})_{\theta,p} = L^p$ with equivalent norms, where
	$1/p = (1-\theta)/p_0 + \theta/p_1$ with $0 < \theta < 1$.
 \end{enumerate}
\end{proposition}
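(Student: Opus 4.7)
For part~\ref{IK:i}, the plan is to use the \emph{truncation decomposition}. Fix $t > 0$ and set
\[ f_1 := (\sign f) \min\bigl(|f|, f^*(t)\bigr), \qquad f_0 := f - f_1,
\]
so that $\|f_1\|_\infty \le f^*(t)$ and $|f_0| = (|f| - f^*(t))_+$. A short layer-cake computation, combined with the bound $\mu\{|f|>f^*(t)\} \le t$ which is built into the definition of $f^*$, yields $\|f_0\|_1 = \int_0^t f^*(s)\,\dd s - t f^*(t)$. Consequently $\|f_0\|_1 + t\|f_1\|_\infty \le \int_0^t f^*(s)\,\dd s = t f^{**}(t)$, which gives the upper bound on the $K$-functional. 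For the matching lower bound I would start from an arbitrary admissible decomposition $f = g + h$: the pointwise inclusion $\{|f|>\lambda + \|h\|_\infty\} \subseteq \{|g|>\lambda\}$ (up to a null set) translates into the rearrangement estimate $f^*(s) \le g^*(s) + \|h\|_\infty$ for every $s > 0$. Integrating over $(0,t)$, dividing by $t$, and bounding $\int_0^t g^*(s)\,\dd s \le \|g\|_1$ then gives $t f^{**}(t) \le \|g\|_1 + t\|h\|_\infty$, and taking the infimum over decompositions closes part~\ref{IK:i}.

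For part~\ref{IK:ii}, the cleanest route is via Holmstedt's reiteration formula
\[ K(f, t, L^{p_0}, L^{p_1}) \sim \Bigl(\int_0^{t^\alpha} f^*(s)^{p_0}\,\dd s\Bigr)^{1/p_0} + t\Bigl(\int_{t^\alpha}^\infty f^*(s)^{p_1}\,\dd s\Bigr)^{1/p_1},
\]
with $\alpha = p_0 p_1/(p_1 - p_0)$ (the endpoint $p_1 = \infty$ requires the standard modification in which the second term becomes $t f^*(t^{p_0})$ and the formula essentially collapses to part~\ref{IK:i}). Holmstedt's formula itself is proved by the same truncation idea as above, only applied at the level $\lambda = f^*(t^\alpha)$. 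Substituting it into $\|f\|_{\theta,p}^p = \int_0^\infty (t^{-\theta} K(f,t))^p \,\dd t/t$, making the change of variables $u = t^\alpha$, and using Fubini to interchange the $u$- and $s$-integrations reduces the expression to a constant multiple of $\int_0^\infty f^*(s)^p \,\dd s = \|f\|_{L^p}^p$. The matching of all the exponents that appear hinges precisely on the relation $1/p = (1-\theta)/p_0 + \theta/p_1$.

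The principal technical obstacle is the careful verification of Holmstedt's formula with uniform constants and, in particular, its uniform behaviour as $p_1 \to \infty$. An alternative route that avoids Holmstedt altogether is to first apply part~\ref{IK:i} to identify $(L^1, L^\infty)_{\theta, p}$ directly with the Lorentz space $L^{p,p} = L^p$, and then invoke the reiteration (stability) theorem to transport this identification to $(L^{p_0}, L^{p_1})_{\theta,p}$; this is conceptually cleaner but relies on the heavier abstract interpolation machinery already cited by the authors for Corollary~\ref{complex1}.
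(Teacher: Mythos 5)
The paper offers no proof of this proposition at all -- it is quoted as a classical result from \cite[p.~109]{bergh} -- so there is nothing to compare against except the standard textbook argument, and yours is exactly that: the truncation of $f$ at height $f^*(t)$ gives the sharp upper bound for the $K$-functional via the identity $\|(|f|-f^*(t))_+\|_{L^1}=\int_0^t f^*(s)\,\dd s - t f^*(t)$, and the subadditivity $f^*(s)\le g^*(s)+\|h\|_{L^\infty}$ gives the matching lower bound. Part~\ref{IK:ii} via Holmstedt's formula (or, as you note, via part~\ref{IK:i} together with Hardy's inequality $\|f^{**}\|_{L^p(\R_+)}\sim\|f^*\|_{L^p(\R_+)}$ and reiteration) is likewise the canonical route; your argument is correct and complete modulo these standard facts.
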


\subsection{Proof of the interpolation result}

The proof of Theorem~\ref{interp} is based on the following estimates for the
$K$-functional.

\begin{lemma} \label{CK}
 Let $1<p<\infty$. We have for all $t>0$ 
 \[ K(f, t, W^{1,1}_D, W^{1,\infty}_D) \ge C_1 t \bigl( |f|^{**}(t) +
	|\nabla f|^{**}(t) \bigr) \qquad \text{for all } f \in W^{1,1}_D +
	W^{1,\infty}_D
 \]
 and
 \[ K(f, t, W^{1,1}_D, W^{1,\infty}_D) \le C_2 t \Bigl( |\nabla \tilde f|^{**}
	(t) + |\tilde f|^{**}(t) + \Bigl( \frac{|\tilde f |}{\dd_D} \Bigr)^{**}
	(t) \Bigr) \qquad \text{for all } f \in W^{1,p}_D.
 \]
 The  constants $C_1$, $C_2$ are independent of $f$ and $t$, and $\tilde f=\mathfrak E f$
 is the Sobolev extension of $f$ from Lemma~\ref{l-extend}.
\end{lemma}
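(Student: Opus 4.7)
The strategy is to establish the two bounds independently. The lower bound is immediate from the $K$-functional on the couple $(L^1,L^\infty)$: for any admissible splitting $f = f_0 + f_1$ with $f_0 \in W^{1,1}_D$ and $f_1 \in W^{1,\infty}_D$, since $\|\cdot\|_{W^{1,\infty}_D}$ dominates both the $L^\infty$-norm and (by Rademacher) $\|\nabla\cdot\|_{L^\infty}$, I would write
\[
\|f_0\|_{W^{1,1}} + t\|f_1\|_{W^{1,\infty}} \;\ge\; \bigl(\|f_0\|_{L^1} + t\|f_1\|_{L^\infty}\bigr) + \bigl(\|\nabla f_0\|_{L^1} + t\|\nabla f_1\|_{L^\infty}\bigr).
\]
Since $f = f_0 + f_1$ and $\nabla f = \nabla f_0 + \nabla f_1$, the first bracket dominates $K(f,t,L^1,L^\infty)$ and the second $K(\nabla f,t,L^1,L^\infty)$, so Proposition \ref{IK}\ref{IK:i} yields the bound $t|f|^{**}(t) + t|\nabla f|^{**}(t)$; taking the infimum over splittings gives the lower bound with $C_1 = 1$.

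For the upper bound I would produce an explicit splitting via the Calder\'on--Zygmund decomposition of Lemma \ref{CZZ}, calibrating the cut-off parameter to $t$. Set
\[
F \;:=\; M\bigl(|\nabla \tilde f| + |\tilde f| + |\tilde f|/\d_D\bigr), \qquad \alpha \;:=\; F^*(t),
\]
so that by definition of the decreasing rearrangement the bad-set $E = \{F > \alpha\}$ of the Calder\'on--Zygmund construction has measure $|E| \le t$. Lemma \ref{CZZ} provides $f = g + b$; Corollary \ref{cor-CZZ}\ref{cor-CZZ:ii} gives $g \in W^{1,\infty}_D$ with $\|g\|_{W^{1,\infty}_D} \le N\alpha$, and the intermediate estimate behind Corollary \ref{cor-CZZ}\ref{cor-CZZ:i}, namely $\|b\|_{W^{1,1}_D} \le \sum_j \|b_j\|_{W^{1,1}} \le N\alpha \sum_j |Q_j|$ combined with $\sum_j|Q_j| \le N|E|$ (from CZZ(5) together with $Q_j \subseteq E$), yields $\|b\|_{W^{1,1}_D} \le C\alpha|E| \le C\alpha t$. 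Plugging both pieces into the $K$-functional gives
\[
K(f,t,W^{1,1}_D,W^{1,\infty}_D) \;\le\; \|b\|_{W^{1,1}_D} + t\|g\|_{W^{1,\infty}_D} \;\le\; C\alpha t + tN\alpha \;\le\; Ct F^*(t).
\]
Finally, Remark \ref{rem-int} gives $F^* \le C(|\nabla \tilde f| + |\tilde f| + |\tilde f|/\d_D)^{**}$, and the subadditivity of the maximal rearrangement distributes this across the three terms appearing in the statement.

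The one non-routine ingredient is that the splitting $(b,g)$ actually realises members of the two boundary-adapted spaces $W^{1,1}_D$ and $W^{1,\infty}_D$ rather than just $W^{1,1}$ and $W^{1,\infty}$ -- this is the whole raison d'\^etre for inserting the Hardy term $|\tilde f|/\d_D$ into the maximal operator defining $E$, and the needed conclusions are already packaged in Corollary \ref{cor-CZZ}. Once they are granted, the rest is simply the choice $\alpha = F^*(t)$ and the standard rearrangement bound of Remark \ref{rem-int}.
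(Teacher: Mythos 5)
Your proposal is correct and follows essentially the same route as the paper: the lower bound by splitting the $W^{1,1}_D$/$W^{1,\infty}_D$ norms into function and gradient parts and invoking $K(\cdot,t,L^1,L^\infty)=t(\cdot)^{**}(t)$, and the upper bound by running the Calder\'on--Zygmund decomposition of Lemma~\ref{CZZ} at level $\alpha(t)=\bigl(M(|\nabla\tilde f|+|\tilde f|+|\tilde f|/\d_D)\bigr)^*(t)$ so that $|E_t|\le t$, then using Corollary~\ref{cor-CZZ} together with \eqref{e-b_jW11} and Remark~\ref{rem-int}. No gaps.
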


\begin{proof}
 For the lower bounds, let $f \in W^{1,1}_D + W^{1,\infty}_D$ be given. Then
 due to Proposition~\ref{IK}~\ref{IK:i}
 \begin{align*}
  K(f, t, W^{1,1}_D, W^{1, \infty}_D) &\ge \bigl( \inf_{f = f_0 + f_1}
	 ( \| f_0 \|_{L^1} + t \| f_1\|_{L^\infty} ) + \inf_{f = f_0 + f_1}
	( \| \nabla f_0 \|_{L^1} + t \| \nabla f_1 \|_{L^\infty} ) \bigr) \\
  &\ge C \bigl( K(|f|, t, L^1, L^\infty) + K(|\nabla f|, t, L^1, L^\infty)
	\bigr) = C t\bigl( |f|^{**}(t) + |\nabla f|^{**}(t) \bigr).
 \end{align*}
 Now, for the upper bound, we consider $f \in W^{1,p}_D$. For every $t > 0$ we
 set
 \[ \alpha(t) := \Bigl( M \Bigl( |\nabla \tilde f| + |\tilde f| + \Bigl|
	\frac{\tilde f}{\d_D} \Bigr| \Bigr) \Bigr)^{*}(t)
 \]
 and we recall from the proof of Lemma~\ref{CZZ} the notation
 \[ E = E_t = \Bigl\lbrace \mathrm{x} \in \R^d : M \Bigl( |\nabla \tilde f| +
	|\tilde f| + \Bigl| \frac{\tilde f}{\d_D} \Bigr| \Bigr)(\mathrm x) >
	\alpha(t) \Bigr\rbrace.
 \]
 Remark that with this choice of $\alpha(t)$, we have $|E_t| \le t$ for all
 $t > 0$. Furthermore, due to Remark~\ref{rem-int} applied with $X = \R^d$
 \begin{equation} \label{e-alphat}
  \alpha(t) \le C \Bigl( |\nabla \tilde f|^{**} + |\tilde f|^{**} + \Bigl|
	\frac{\tilde f}{\d_D} \Bigr|^{**} \Bigr)(t).
 \end{equation}
    
 Now, we take the Calder\'on-Zygmund decomposition from Lemma~\ref{CZZ} for $f$
 with this choice of $\alpha(t)$. This results in a decomposition of
 $f\in W^{1,p}_D$ as $f = g + b$ with $b \in W^{1,1}_D$ and $g \in
 W^{1,\infty}_D$. Invoking Corollary~\ref{cor-CZZ}~\ref{cor-CZZ:ii}, we have
 $\|g\|_{W^{1,\infty}_D} \le C \alpha(t)$ and from \eqref{e-b_jW11} we deduce
 \[ \| b \|_{W^{1,1}_D} \le C \alpha(t) \sum_{j \in I} |Q_j| \le C \alpha(t)
	|E_t| \le C t \alpha(t).
 \]
 Combining these estimates with \eqref{e-alphat}, we find
 \[ K(f, t, W^{1,1}_D, W^{1,\infty}_D) \le \|b\|_{W^{1,1}_D} + t
	\|g\|_{W^{1,\infty}_D} \le C t \alpha(t) \le C t \Bigl( |\nabla
	\tilde f|^{**}(t) + |\tilde f|^{**}(t) + \Bigl( \frac{|\tilde f|}{\d_D}
	\Bigr)^{**}(t) \Bigr).
 \]
 for all $f \in W^{1,p}_D$ and for all $t > 0$ and this was the claim.
\end{proof}

\begin{proof}[Proof of Theorem~\ref{interp}] 
 By the reiteration Theorem (cf. \cite[Thm.1.10.2]{triebel})
 it suffices to establish the special
 case of $p_0 = 1$ and $p_1 = \infty$, i.e.\@ $W^{1,p}_D = (W^{1,1}_D,
 W^{1,\infty}_D)_{1-1/p,p}$ with equivalent norms for $1 < p < \infty$.
 First, since $\Omega$ is bounded we have $W^{1,p}_D \hookrightarrow W^{1,1}_D
 \hookrightarrow W^{1,1}_D + W^{1,\infty}_D$. Moreover, for $f \in W^{1,p}_D$
 we have due to Lemma~\ref{CK}
 \begin{align*}
  \|f\|_{1-1/p,p} &= \Bigl( \int_0^\infty \bigl[ t^{1/p - 1} K(f, t,
	W^{1,1}_D, W^{1, \infty}_D) \bigr]^p \frac{\d t}{t} \Bigr)^{1/p} \\
  &\le C \Bigl( \int_0^\infty \Bigl[ t^{1/p} \Bigl( |\nabla \tilde f|^{**}(t)
	+ |\tilde f|^{**}(t) + \Bigl( \frac{|\tilde f|}{\d_D} \Bigr)^{**}(t)
	\Bigr) \Bigr]^p \frac{\d t}{t} \Bigr)^{1/p} \\
  &= C \Bigl\| |\nabla \tilde f|^{**} + |\tilde f|^{**} + \Bigl(
	\frac{|\tilde f|}{\d_D} \Bigr)^{**} \Bigr\|_{L^p(\R_+)}.
 \intertext{Since $\|g^{**}\|_{L^p(\R_+)} \sim \|g^*\|_{L^p(\R_+)} =
	\|g\|_{L^p}$, this allows us to continue}
  &\le C \Bigl( \| \nabla \tilde f \|_{L^p(\R^d)} + \|\tilde f\|_{L^p(\R^d)}
	+ \Bigl\| \frac{\tilde f}{\d_D} \Bigr\|_{L^p(\R^d)} \Bigr) \\
  &\le C \|\tilde f\|_{W^{1,p}_D(\R^d)} \le C \| f \|_{W^{1,p}_D}
\end{align*}
thanks to the Hardy inequality in \eqref{e-TildeHardy} and the continuity of
the extension operator that assigns $\tilde f$ to $f$.

Conversely, let $f\in (W^{1,1}_D, W^{1,\infty}_D)_{1-1/p,p}$. Then, invoking
the lower estimate in Lemma~\ref{CK} we find as above, investing that $g
\mapsto g^{**}$ is sublinear,
\begin{align*}
  \|f\|_{1-1/p,p} &\ge C \Bigl( \int_0^\infty \bigl[ t^{1/p} \bigl( |f|^{**}(t)
	+ |\nabla f|^{**}(t) \bigr) \bigr]^p \frac{\d t}{t} \Bigr)^{1/p} = C
	\bigl\| |f|^{**} + | \nabla f |^{**} \bigr\|_{L^p(\R_+)} \\
  &\ge C \bigl\| (|f| + |\nabla f|)^{**} \bigr\|_{L^p(\R_+)} \ge C \| |f| +
	|\nabla f| \|_{L^p} \ge C \| f \|_{W^{1,p}}.
\end{align*}
It remains to check the right boundary behaviour of $f$, i.e.\@ $f \in
W^{1,p}_D$. In order to do so, we use the fact that $W^{1,1}_D \cap
W^{1,\infty}_D$ is dense in $(W^{1,1}_D, W^{1,\infty}_D)_{1-1/p,p}$, see
\cite[Theorem~3.4.2]{bergh}.
If $f = \lim_{n \to \infty} f_n$ for some sequence $(f_n)$ in $W^{1,1}_D \cap
W^{1,\infty}_D$, then the limit is also in $W^{1,p}(\Omega)$ by the above
inequality. As $W^{1,\infty}_D \subseteq W^{1,p}_D$ by Lemma~\ref{inclusion},
we have $f_n \in W^{1,p}_D$ for every $n \in \N$. As this space is closed in
$W^{1,p}$, this yields $f \in W^{1,p}_D$ and we find
\[ \| f\|_{W^{1,p}_D} = \| f \|_{W^{1,p}} \le C \|f\|_{1-1/p,p}.
\qedhere
\]
\end{proof}


%
%
%
%
%
%
%
%
\section{Off-diagonal estimates} \label{sec-OffD}
%
%
%
%
%
%
%
%
\noindent
As a next preparatory step towards the proof of Theorem~\ref{t-mainsect}, we
show that the Gaussian estimates imply $L^p$-$L^2$ off-diagonal estimates for
the operators $T(t) := \e^{-tA_0}$ and $t A_0 T(t)$.
%
\begin{lemma}\label{offdiaglemma}
Let $p \in {[1,2]}$ and let $E, F \subseteq \Omega$ be relatively closed. Then
there exist constants $C \geq 0$ and $c > 0$, such that for
every $h \in L^2 \cap L^p$ with $\supp(h) \subseteq E$ we have for all $t > 0$
\begin{enumerate}
 \item $\displaystyle \|T(t) h \|_{L^2(F)} \le C t^{(d/2 - d/p)/2} \e^{-c \frac{\dd(E,F)^2}{t}}
	\| h\|_{L^p}$ for $p \ge 1$ and
 \item $\displaystyle \|t A_0 T(t) h \|_{L^2(F)} \le C t^{(d/2 - d/p)/2}
	\e^{-c \frac{\dd(E,F)^2}{t}} \| h\|_{L^p}$ for $p > 1$.
\end{enumerate}
\end{lemma}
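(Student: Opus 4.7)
The plan is to derive everything from the pointwise Gaussian kernel bound of Proposition~\ref{t-gausss}, without any deeper semigroup machinery beyond sectoriality. Since $A_0 = -\nabla \cdot \mu \nabla + 1$, I would first write $T(t) = \e^{-t}\, \e^{t\nabla \cdot \mu \nabla}$ and choose $\epsilon \in (0,1)$ in \eqref{5.5}, so that the growing factor $\e^{\epsilon t}$ is absorbed into $\e^{-t}$. This yields a nonnegative integral kernel $\tilde K_t$ for $T(t)$ satisfying
\[
 \tilde K_t(\mathrm x, \mathrm y) \le C\, t^{-d/2}\, \e^{-c|\mathrm x - \mathrm y|^2/t}
\]
uniformly in $t > 0$, which is the common input for both statements.

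For part~(i), the plan is to split the Gaussian geometrically: for $h$ supported in $E$ and $\mathrm x \in F$, one has $|\mathrm x - \mathrm y| \ge \dd(E,F)$ on $\supp h$, so
\[
 \e^{-c|\mathrm x - \mathrm y|^2/t} \le \e^{-c \dd(E,F)^2/(2t)}\, \e^{-c|\mathrm x - \mathrm y|^2/(2t)}.
\]
This gives the pointwise majorization $|T(t)h(\mathrm x)| \le C\, t^{-d/2}\, \e^{-c \dd(E,F)^2/(2t)}\, (G_t * |\tilde h|)(\mathrm x)$ with $G_t(\mathrm y) := \e^{-c|\mathrm y|^2/(2t)}$ and $\tilde h$ the zero-extension of $h$ to $\R^d$. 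I would finish via Young's inequality with index $q$ defined by $1 + \tfrac12 = \tfrac1p + \tfrac1q$, invoking $\|G_t\|_{L^q(\R^d)} \le C t^{d/(2q)}$; the surviving power of $t$ is exactly $t^{d/(2q) - d/2} = t^{(d/2 - d/p)/2}$, as required.

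For part~(ii), I would exploit the analyticity of the semigroup (Proposition~\ref{p-basicl2}) via the Cauchy representation
\[
 t A_0 T(t) = -\frac{t}{2\pi \i} \oint_{|z-t|=\rho t} \frac{T(z)}{(z-t)^2}\, \dd z
\]
on a small circle inside a sector of holomorphy $\Sigma_\theta$, with $\rho \in (0, \sin\theta)$. The key ingredient is that the Gaussian kernel bound propagates to complex time in the form $|\tilde K_z(\mathrm x, \mathrm y)| \le C |z|^{-d/2}\, \e^{-c|\mathrm x - \mathrm y|^2/\Re z}$ for $z \in \Sigma_\theta$. Granting this, the argument of~(i) applied to $T(z)$ gives, uniformly on the contour,
\[
 \|T(z)h\|_{L^2(F)} \le C |z|^{(d/2 - d/p)/2}\, \e^{-c \dd(E,F)^2/\Re z}\, \|h\|_{L^p},
\]
and inserting this into the Cauchy formula (the contour length $2\pi\rho t$ compensating $(\rho t)^2$, while $\Re z \ge (1-\rho)t \ge t/2$) produces exactly the assertion of~(ii).

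The main obstacle I foresee is the extension of the pointwise Gaussian bound to complex times $z \in \Sigma_\theta$: this is classical but not immediate. I would handle it by combining Davies' exponential perturbation applied to the $L^2$-semigroup, which yields $L^2$-off-diagonal Gaussian decay for $T(z)$, with interpolation against the real-time pointwise bound to restore a kernel estimate for complex $z$. The same point explains the $p > 1$ restriction in~(ii): it is what guarantees, via Proposition~\ref{p-semigr}, that $t A_0 T(t)$ is meaningful and uniformly bounded on the $L^p$-scale, so the Cauchy representation can be safely read through $L^p$.
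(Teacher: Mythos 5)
Your part (i) is essentially the paper's own argument: absorb the factor $\e^{\epsilon t}$ into the $\e^{-t}$ coming from the shift by $1$, split the Gaussian to peel off $\e^{-c\,\dd(E,F)^2/(2t)}$, dominate $T(t)h$ by a convolution with a Gaussian, and apply Young's inequality; your exponent bookkeeping is correct. Part (ii) is where you genuinely diverge. The paper never leaves real time: it reduces $p\in{]1,2[}$ to $p=2$ by writing $tA_0T(t)=T(t/2)\bigl(tA_0T(t/2)\bigr)$, using (i) with $p=1$ for the on-diagonal $L^1\to L^2$ bound $Ct^{-d/4}$, and then interpolating (Riesz--Thorin) between this endpoint and the $p=2$ off-diagonal estimate --- which, incidentally, is the real reason the Gaussian factor and hence the statement is lost at $p=1$, not the $L^p$-boundedness of $tA_0T(t)$ that you invoke. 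The $p=2$ case is then obtained by Davies' trick applied directly to the generator: the twisted operator $A_\varrho=\e^{\varrho\varphi}A_0\e^{-\varrho\varphi}$, the uniform bound $\|tA_\varrho\e^{-tA_\varrho}\|\le C\e^{4\varrho^2t\|\mu\|_{L^\infty}}$ from accretivity of the twisted form, and optimization in $\varrho$. Your route --- Cauchy's integral formula for $tA_0T(t)$ fed by off-diagonal bounds for $T(z)$ at complex times --- is a legitimate classical alternative, and would even give (ii) at $p=1$ if the complex-time kernel bound you assert were in hand; but that bound is exactly where all the work hides. Your sketch for it (Davies on $L^2$ for complex $z$, then ``interpolation against the real-time pointwise bound'') has the right shape but is a pointer to a nontrivial Phragm\'en--Lindel\"of/Stein-interpolation argument rather than a proof; note also that the lemma only needs the $L^p$--$L^2(F)$ off-diagonal estimate for $T(z)$, not a pointwise kernel bound at complex time, which would shorten your detour. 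In sum: the paper's version buys self-containedness (everything at real time, Davies applied once on $L^2$), while yours buys a formally stronger conclusion and a cleaner reduction of (ii) to (i), at the price of importing complex-time Gaussian theory whose proof you have only gestured at.
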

%
\begin{proof}
\begin{enumerate}
 \item We denote the kernel of $T(t)$ by $k_t$. Since $A_0 = - \nabla \cdot \mu
	\nabla + 1$, using the notation of Proposition~\ref{t-gausss}, we have
	$k_t = \e^{-t} K_t$. Thus for $k_t$ we have the Gaussian estimates
	\[ 0 \le k_t(\mathrm x, \mathrm y) \le \frac{C}{t^{d/2}}
		\e^{-c \frac{|\mathrm x- \mathrm y|^2}{t}}, \quad t > 0,
		\text{ a.a. } \mathrm x, \mathrm y \in \Omega,
	\]
	without the term $\e^{\varepsilon t}$. Using these, a straightforward
	calculation shows
	\[ \|T(t) h \|_{L^2(F)}^2 \le \frac{C}{t^d}
		\e^{- c \frac{\dd(E,F)^2}{t}} \bigl\|
		\e^{- c \frac{|\cdot|^2}{2t}} * |\tilde h|
		\bigr\|^2_{L^2(\R^d)},
	\]
	where we denoted by $\tilde h$ the extension by $0$ of $h$ to the whole
	of $\R^d$. Now, applying Young's inequality to bound the convolution
	one obtains the assertion.
 \item In a first step, we observe that it is enough to show the assertion in
	the case $p=2$. In fact, we have by the first part of the proof (set $E
	= F = \Omega$ and $p=1$)
	\begin{align*}
	  \| t A_0 T(t) h \|_{L^2(F)} &\le \| T(t/2) t A_0 T(t/2) h \|_{L^2}
		\le C t^{- d/4} \| t A_0 T(t/2) h \|_{L^1} \\
	  &\le C t^{-d/4} \|h\|_{L^1},
	\end{align*}
	since $T(t)$ extrapolates to an analytic semigroup on $L^1$ by the
	Gaussian estimates, cf.\@ \cite{Hie96} or \cite{Are97}. Admitting the
	assertion in the case $p=2$:
	\[ \| t A_0 T(t) h \|_{L^2(F)} \le C \e^{-c \frac{\dd(E,F)^2}{t}}
		\|h\|_{L^2},
	\]
	the result then follows by interpolation using the Riesz-Thorin
	Theorem.

	In order to prove the off-diagonal bounds in the case $p=2$, we apply
	Davies' trick, following the proof of \cite[Proposition~2.1]{auschmem}.
	Since this procedure is rather standard, we just give the major steps.

	For some Lipschitz continuous function $\varphi : \Omega \to \R$ with
	$\|\nabla \varphi\|_{L^\infty} \le 1$ and $\varrho > 0$ we define the
	twisted form
	\[ a_\varrho (u,v) = \int_\Omega \bigl( \mu \nabla (
		\e^{-\varrho \varphi} u) \cdot \nabla (\e^{\varrho \varphi}
		\overline{v}) + u \overline{v} \bigr) \; \dd \mathrm x
		, \qquad u,v \in D(a_\varrho) := W^{1,2}_D.
	\]
	Setting $\kappa := 2 \varrho^2 \|\mu\|_{L^\infty}$ and estimating the
	real and imaginary part of the quadratic form $a_\varrho + \kappa - 1$
	one finds that the numerical range of $a_\varrho + \kappa$ lies in the
	(shifted) sector $\mathcal S + 1$, where $\mathcal S := \bigl\{ \lambda
	\in \C : |\Im \lambda | \le
	\sqrt{\frac{\|\mu\|_{L^\infty}}{\mu_\bullet}} \Re \lambda \bigr\}$ and
	$\mu_\bullet$ is the ellipticity constant from
	Assumption~\ref{assu-coeff}.

	In the following, we denote by $A_\varrho$ the operator associated to
	the form $a_\varrho$ in $L^2$. Since $A_\varrho + \kappa - 1$ is
	maximal accretive, cf.\@ \cite[Ch.~VI.2]{kato}, its negative generates
	an analytic $C_0$-semigroup $\e^{-tA_\varrho}$ on $L^2$ and $A_\varrho$
	even admits a bounded $H^\infty$-calculus there,
	cf.\@ \cite[Ch.~2.4]{DHP03} or \cite{Haa06}. Applying the functional
	calculus of $A_\varrho$, for every $t \ge 0$ we find
	\begin{align}
	  \bigl\| t A_\varrho \e^{-t A_\varrho} \bigr\| &\le
		\bigl\| t (A_\varrho + \kappa) \e^{-t( A_\varrho + \kappa)}
		\bigr\| \e^{t \kappa} + \bigl\| \e^{-t (A_\varrho + \kappa)}
		\bigr\| t \kappa \e^{t \kappa} \nonumber\\
	  &\le C \e^{t \kappa} + C \e^{2 t \kappa} \le C
		\e^{4 \varrho^2 t \|\mu\|_{L^\infty}}. \label{e-varrhoAbsch}
	\end{align}
	Recalling that the form domain $W^{1,2}_D$ is invariant under
	multiplications with $\e^{\varrho \varphi}$ by
	Proposition~\ref{p-basicl2}~\ref{p-basicl2:iii}, it is easy to verify
	that for every $f \in L^2$ with $\e^{-\varrho \varphi} f \in D(A_0)$,
	we have $A_\varrho f = - \e^{\varrho \varphi} A_0
	\e^{- \varrho \varphi} f$. From this we then deduce
	\[ R(\lambda, A_\varrho) = \e^{\varrho \varphi} R(\lambda, A_0)
		\e^{- \varrho \varphi}, \quad \text{for all } \lambda >
		\varrho^2 \|\mu\|_{L^\infty},
	\]
	which finally yields for every $f \in L^2$
	\[ \e^{-t A_\varrho} f = \lim_{n \to \infty} \Bigl[ \frac nt R(n/t,
		A_\varrho) \Bigr]^n f = \e^{\varrho \varphi}
		\lim_{n \to \infty} \Bigl[ \frac nt R(n/t, A_0) \Bigr]^n
		\e^{- \varrho \varphi} f = \e^{\varrho \varphi} T(t)
		\e^{-\varrho \varphi} f.
	\]
	Now we specify $\varphi(\mathrm x) = \dd(\mathrm x, E)$ for $\mathrm x
	\in \Omega$. Then for every $h \in L^2$ with support in $E$ and all
	$\varrho, t > 0$ we get
	\[ t A_0 T(t) h = - t \frac{\dd}{\dd t} T(t) h = t
		\e^{-\varrho \varphi} A_\varrho \e^{-t A_\varrho}
		\e^{\varrho \varphi} h = t \e^{- \varrho \varphi} A_\varrho
		\e^{-t A_\varrho} h,
	\]
	as $\varphi = 0$ on the support of $h$. This yields for all $\varrho, t
	> 0$
	\begin{align*}
	  \| t A_0 T(t) h\|_{L^2(F)} &= \| t \e^{-\varrho \dd(\cdot, E)}
		A_\varrho \e^{-t A_\varrho} h\|_{L^2(F)} \le
		\e^{-\varrho \dd(E, F)} \| t A_\varrho \e^{- t A_\varrho} h
		\|_{L^2} \\
	  &\le C \e^{4\varrho^2 \|\mu\|_{L^\infty} t - \varrho \dd(E,F)}
		\|h\|_{L^2},
	\end{align*}
	thanks to \eqref{e-varrhoAbsch}. Minimizing over $\varrho > 0$ finally
	yields the assertion with $c = (8 \|\mu\|_{L^\infty})^{-1}$.
  \qedhere
 \end{enumerate}
\end{proof}

%
%
%
%
%
%
%
%
%

%
%
%
%
\section{Proof of the main result} \label{sec-Proof}
%
%
%
%
\noindent
We now turn to the proof of Theorem~\ref{t-mainsect}. Building on the
hypotheses that the assertion is true for $p=2$,
cf.\@ Assumption~\ref{assu-coeffi}, we will show the corresponding inequality
in a weak~$(p,p)$ setting for all $1 < p < 2$. Then our result follows by
interpolation. More precisely we want to show the following.
%
\begin{proposition}
 Let $\Omega$ and $D$ satisfy Assumption~\ref{assu-general}, and let $\mu$ be such that
Assumptions~\ref{assu-coeff} and \ref{assu-coeffi} are true. Then there is a constant
$C \ge 0$, such that for all $p \in {]1,2[}$, for every $f \in C^\infty_D$ and all $\alpha > 0$ we have
 \begin{equation} \label{e-weak}
  \bigl| \bigl\{ \mathrm x \in \Omega : |A_0^{1/2} f (\mathrm x) | > \alpha
	\bigr\} \bigr| \le \frac{C}{\alpha^p} \| f \|_{W^{1,p}_D}^p.
 \end{equation}
\end{proposition}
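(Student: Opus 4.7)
The strategy is the Calder\'on--Zygmund argument of Auscher \cite{auschmem}, now built upon the boundary-adapted machinery of the preceding sections. Fix $p \in {]1,2[}$, $f \in C^\infty_D$, and $\alpha > 0$, and apply Lemma \ref{CZZ} at level $\alpha$ to decompose $f = g + b$ with $b = \sum_{j \in I} b_j$ satisfying \ref{CZZ(1)}--\ref{CZZ(6)}; by Corollary \ref{cor-CZZ}\ref{cor-CZZ:iii} all summands belong to $W^{1,2}_D$. Set $E^* := \bigcup_j 4\sqrt d\, Q_j$; then \ref{CZZ(4)} together with \ref{CZZ(5)} yields $|E^*| \le C \alpha^{-p} \|f\|^p_{W^{1,p}_D}$. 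Since
\[
 \{|A_0^{1/2} f| > \alpha\} \subseteq \{|A_0^{1/2} g| > \alpha/2\} \cup E^* \cup \bigl(\{|A_0^{1/2} b| > \alpha/2\} \cap (\Omega \setminus E^*)\bigr),
\]
Chebyshev reduces \eqref{e-weak} to the two $L^2$-bounds
\begin{equation} \label{e-planredux}
 \|A_0^{1/2} g\|^2_{L^2(\Omega)} + \|A_0^{1/2} b\|^2_{L^2(\Omega\setminus E^*)} \le C \alpha^{2-p} \|f\|^p_{W^{1,p}_D}.
\end{equation}

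The good part of \eqref{e-planredux} is immediate: by Assumption \ref{assu-coeffi} one has $\|A_0^{1/2} g\|^2_{L^2} \le C \|g\|^2_{W^{1,2}_D}$, and this last norm is split into the integrals over $E$ (where $|g| + |\nabla g| \le C\alpha$ by property \ref{CZZ(2)} and $|E| \le C\alpha^{-p}\|f\|^p_{W^{1,p}_D}$) and over $F = \R^d \setminus E$ (where $g = f$, $|f|+|\nabla f|\le\alpha$ a.e., so that $|f|^2+|\nabla f|^2 \le \alpha^{2-p}(|f|^p+|\nabla f|^p)$).

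For the bad part, choose the parabolic scale $t_j := \ell_j^2$ and, using the Laplace representation of $A_0^{1/2}$ (legitimate by the $H^\infty$-calculus of Proposition \ref{p-semigr}\ref{p-semigr:ii}), split
\[
 A_0^{1/2} b_j = \underbrace{\frac{1}{\sqrt\pi}\int_0^{t_j} A_0 \e^{-sA_0} b_j \frac{\dd s}{\sqrt s}}_{=: U_j} + \underbrace{A_0^{1/2} \e^{-t_j A_0} b_j}_{=: V_j}.
\]
Decompose $\Omega \setminus E^*$ into dyadic shells $C_{j,k} := (2^{k+1}Q_j \setminus 2^k Q_j) \cap (\Omega \setminus E^*)$ for $k \ge k_0$, so that $\dd(Q_j, C_{j,k}) \gtrsim 2^k\ell_j$. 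Lemma \ref{offdiaglemma}(ii) with $p=1$ applied to $A_0 \e^{-sA_0} b_j$ on $C_{j,k}$, followed by integration in $s$, yields exponential decay in $k$ for $U_j$; the analogous computation on $(t_j,\infty)$ yields polynomial decay $2^{-k(1+d/2)}$ for $V_j$. Feeding in the sharp estimate $\|b_j\|_{L^1} \le C\alpha\ell_j|Q_j|$ (see below), both simplify to
\[
 \|U_j\|_{L^2(C_{j,k})} + \|V_j\|_{L^2(C_{j,k})} \le C \alpha |Q_j|^{1/2} \bigl(\e^{-c 4^k} + 2^{-k(1+d/2)}\bigr).
\]
A duality argument against any unit $u \in L^2(\Omega)$, combined with Cauchy--Schwarz in $j$, the summation $\sum_j|Q_j|\le C\alpha^{-p}\|f\|^p_{W^{1,p}_D}$ and the scaled overlap bound $\sum_j\mathbf{1}_{2^{k+1}Q_j} \le C 2^{kd}$, reduces the claim to the convergence of $\sum_k 2^{kd/2}(\e^{-c 4^k}+2^{-k(1+d/2)})$, which completes \eqref{e-planredux}.

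\textbf{Main obstacle.} The decisive technical point is the sharp $L^1$-bound $\|b_j\|_{L^1} \le C\alpha\ell_j|Q_j|$ on \emph{special} cubes (those close to the Dirichlet part $D$): on such cubes $b_j = \tilde f\varphi_j$ is not mean-zero, so the usual Poincar\'e argument fails, and the missing factor $\ell_j$ must be recovered from the Hardy-type term $|b_j|/\d_D$ that is specifically retained in the maximal function controlling the decomposition of Lemma \ref{CZZ} (together with the pointwise bound $\d_D \le C\ell_j$ on special cubes). This is precisely where the boundary-aware Calder\'on--Zygmund decomposition of Section \ref{sec-CZD}, ultimately based on the Hardy inequality of Section \ref{sec-Hardy}, is indispensable; without it, the annular decay could not be balanced against a too-large power of $\ell_j$ and the final summation would collapse.
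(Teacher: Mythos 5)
Your overall architecture (Calder\'on--Zygmund decomposition, $L^2$ bound for $g$ via Assumption~\ref{assu-coeffi}, off-diagonal estimates for the bad part on the complement of the dilated cubes) matches the paper's up to the treatment of the \emph{far} part of the time integral, where you genuinely diverge: the paper handles $\int_{r_j}^\infty A_0\e^{-t^2A_0}b_j\,\dd t$ by the $H^\infty$-calculus square-function estimate of Lemma~\ref{RHinftylemma} in $L^{d/(d-1)}$ (with no excised set), whereas you try to push the off-diagonal/annular summation through both time regimes by exploiting the sharpened bound $\|b_j\|_{L^1}\le C\alpha\ell_j|Q_j|$. That sharpened bound is indeed correct, and you identify its mechanism accurately: on special cubes $\d_D\le C\ell_j$, so $|b_j|\le C\ell_j|b_j|/\d_D$ and the Hardy term in the controlling maximal function supplies the extra factor $\ell_j$ (on usual cubes it comes from Poincar\'e). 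This is a legitimate alternative route in principle, and for the good part your direct splitting over $E$ and $F$ is equivalent to the paper's interpolation between $W^{1,p}_D$ and $W^{1,\infty}_D$.

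The gap is your invocation of Lemma~\ref{offdiaglemma}(ii) with $p=1$. The lemma asserts the bound for $tA_0T(t)$ only for $p>1$, and its proof cannot give $p=1$: it interpolates (Riesz--Thorin) between an \emph{on-diagonal} $L^1\to L^2$ bound without exponential factor and the off-diagonal $L^2\to L^2$ bound, so the exponential degenerates completely as $p\to1$. This is not a cosmetic issue, because your scheme needs exactly the $p=1$ case. If you replace it by the available $p=d/(d-1)$ version together with $\|b_j\|_{L^{d/(d-1)}}\le C\alpha|Q_j|$ (the paper's \eqref{PoincSobVerweis} -- and note that the extra $\ell_j$ you gained in $L^1$ is precisely what H\"older consumes in passing to $L^{d/(d-1)}$, so there is no sharpened $L^{d/(d-1)}$ bound to exploit), then the far-field piece $V_j=A_0^{1/2}\e^{-t_jA_0}b_j$ only decays like $\|V_j\|_{L^2(C_{j,k})}\le C\alpha|Q_j|^{1/2}2^{-kd/2}$: for times $s\gg4^k\ell_j^2$ the exponential is useless and the integral $\int_{4^k\ell_j^2}^\infty s^{-1-d/4}\,\dd s$ produces $(4^k\ell_j^2)^{-d/4}$, one full power of $2^{-k}$ short of your claimed $2^{-k(1+d/2)}$. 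Your final sum then becomes $\sum_k2^{kd/2}\cdot2^{-kd/2}$, which diverges. So as written the argument does not close with the paper's lemmas. It can be repaired either by separately establishing the $L^1$--$L^2$ off-diagonal estimate for $tA_0\e^{-tA_0}$ (e.g.\@ via Gaussian bounds for $\partial_tK_t$, obtainable from Proposition~\ref{t-gausss} and analyticity by a Cauchy-integral argument -- but this is not in the paper), or by abandoning the absolute annular summation for the far times and using the $\ell^2$-orthogonality in $k$ provided by Lemma~\ref{RHinftylemma}, as the paper does.
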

%
\begin{proof}
 We follow the proof of \cite[Lemma~4.13]{auschmem}. Let $\alpha > 0$, $p \in
 {]1,2[}$ and $f \in C^\infty_D$ be given. We apply the Calder\'on-Zygmund
 decomposition from Lemma~\ref{CZZ} to write $f = g + \sum_{j \in I} b_j$. In
 all what follows the references \ref{CZZ(1)} -- \ref{CZZ(6)} will stand for
 the corresponding features in Lemma~\ref{CZZ}.

 Since $C^\infty_D \hookrightarrow W^{1,2}_D =
 \mathrm{dom}_{L^2}(A_0^{1/2})$, by Corollary~\ref{cor-CZZ}~\ref{cor-CZZ:iii}
 also the functions $g$ and $b = \sum_{j \in I} b_j$ are in the $L^2$-domain of
 $A_0^{1/2}$ and $A_0^{1/2} b = \sum_{j \in I} A_0^{1/2} b_j$. Thus, we can estimate
 \begin{equation} \label{I,II}
  \bigl| \bigl\{ \mathrm x \in \Omega : |A_0^{1/2} f (\mathrm x) | > \alpha
	\bigr\} \bigr| \le \Bigl| \Bigl\{ \mathrm x \in \Omega : \bigl|
	A_0^{1/2} g(\mathrm x) \bigr| > \frac \alpha2 \Bigr\} \Bigr| + \Bigl|
	\Bigl\{ \mathrm x \in \Omega : \Bigl| \Bigl(A_0^{1/2} b \Bigr)
	(\mathrm x) \Bigl| > \frac \alpha2 \Bigr\} \Bigr|,
 \end{equation}
and our aim is to bound both terms on the right hand side by $C \| f
\|_{W^{1,p}_D}^p/\alpha^p$.

The one containing $g$ is as always the easy part. We first note
that thanks to \ref{CZZ(6)} and Corollary~\ref{cor-CZZ} we
know
\[ \| g \|_{W^{1,p}_D} \le C \| f\|_{W^{1,p}_D} \qquad \text{and} \qquad
	\| g \|_{W^{1, \infty}_D} \le C \alpha.
\]
By interpolation this yields
\[ \| g \|_{W^{1,2}_D}^2 \le C \| g \|_{W^{1,p}_D}^p \| g
	\|_{W^{1,\infty}_D}^{2 - p} \le C \alpha^{2 - p} \| f \|_{W^{1,p}_D}^p.
\]
%
This implies, using the Tchebychev inequality and Assumption~\ref{assu-coeffi}
\[ \Bigl| \Bigl\{ \mathrm x \in \Omega : \bigl| A_0^{1/2} g (\mathrm x) \bigr|
	> \frac \alpha2 \Bigr\} \Bigr| \le \frac{C}{\alpha^2} \| A_0^{1/2} g
	\|_{L^2}^2 \le \frac{C}{\alpha^2} \| g \|_{W^{1,2}_D}^2 \le
	\frac{C}{\alpha^p} \| f \|_{W^{1,p}_D}^p.
\]
Let's turn to the estimate of the second part in \eqref{I,II}.
We first recall the integral representation of the square root
\[ A_0^{1/2} u = \frac{2}{\sqrt{\pi}} \int_0^\infty A_0 \e^{-t^2 A_0} u
	\; \dd t \quad \text{for all } u \in \mathrm{dom}_{L^2}(A_0^{1/2}),
\]
which can be deduced straightforwardly from the well known formula (see
\cite[Ch.~2.6]{pazy})
\[ A_0^{-1/2}=\frac {1}{\sqrt \pi} \int_0^\infty \frac {e^{-tA_0}}{\sqrt t}
	\; \dd t.
\]
This yields thanks to $A_0^{1/2} b = \sum_{j \in I} A_0^{1/2} b_j$
\begin{align*}
  \Bigl| \Bigl\{ \mathrm x \in \Omega : \Bigl| \Bigl(A_0^{1/2} b \Bigr)
	(\mathrm x) \Bigr| > \frac \alpha2 \Bigr\} \Bigr| &= \Bigl| \Bigl\{
	\mathrm x \in \Omega : \Bigl| \frac{2}{\sqrt \pi}
	\int_0^\infty \sum_{j \in I} \Bigl( A_0 \e^{-t^2 A_0} b_j \Bigr)
	(\mathrm x) \; \dd t \Bigr| > \frac \alpha2 \Bigr\} \Bigr| \\
  &= \limsup_{m \to \infty} \Bigl| \Bigl\{ \mathrm x \in \Omega : \Bigl|
	\frac{2}{\sqrt \pi} \int_{2^{-m}}^\infty \sum_{j \in I} \Bigl( A_0
	\e^{-t^2 A_0} b_j \Bigr) (\mathrm x) \; \dd t \Bigr| > \frac \alpha2
	\Bigr\} \Bigr|.
\end{align*}
%
In the following we denote again by $\ell_j$ the sidelength of the cube $Q_j$,
$j \in I$, and we set $r_j := 2^k$ for that value of $k \in \Z$, such that
$2^k \le \ell_j < 2^{k+1}$. With this notation we split the integral for every
$m \in \N$:
%
%
\begin{align}
  &\Bigl| \Bigl\{ \mathrm x \in \Omega : \Bigl| \frac{2}{\sqrt \pi}
	\int_{2^{-m}}^\infty \sum_{j \in I} \Bigl( A_0 \e^{-t^2 A_0} b_j
	\Bigr) (\mathrm x) \; \dd t \Bigr| > \frac \alpha2 \Bigr\} \Bigr|
	\nonumber\\
  \le\strut& \Bigl| \Bigl\{ \mathrm x \in \Omega : \Bigl| \sum_{j \in I}
        \int_{2^{-m}}^{r_j \vee 2^{-m}} A_0 \e^{-t^2 A_0} b_j (\mathrm x)
	\; \dd t \Bigr| > \frac{\sqrt{\pi}\alpha}{8} \Bigr\} \Bigr| \nonumber\\
  & \qquad + \Bigl| \Bigl\{ \mathrm x \in \Omega : \Bigl| \sum_{j \in I}
	\int_{r_j \vee 2^{-m}}^\infty A_0 \e^{-t^2 A_0} b_j (\mathrm x)
	\; \dd t \Bigr| > \frac{\sqrt{\pi}\alpha}{8} \Bigr\} \Bigr|.
	\label{e-split}
\end{align}
%
%
For the estimate of the first integral we may restrict ourselves to the case
$r_j > 2^{-m}$, since otherwise there is no contribution from this term. We do
the usual trick to split off the union of the sets $4 Q_\iota$, $\iota \in I$,
that does not produce any sort of problem due to
\[ \Bigl| \bigcup_{\iota \in I} 4 Q_\iota \Bigr| \le \sum_{\iota \in I} |4
	Q_\iota| \le C \sum_{\iota \in I} |Q_\iota|
	\stackrel{\ref{CZZ(4)}}{\le} \frac{C}{\alpha^p} \| f \|_{W^{1,p}_D}^p.
\]
So, we only have to estimate
\begin{align}
  & \Bigl| \Bigl\{ \mathrm x \in \Omega \setminus \bigcup_{\iota \in I} 4
	Q_\iota : \Bigl| \sum_{j \in I} \int_{2^{-m}}^{r_j} A_0 \e^{-t^2 A_0}
	b_j (\mathrm x) \; \dd t \Bigr| > \frac{\sqrt{\pi}\alpha}{8} \Bigr\}
	\Bigr| \nonumber \\
  =\strut& \Bigl| \Bigl\{ \mathrm x \in \Omega : \Bigl|
	{\mathbf{1}}_{(\cup_{\iota \in I} 4 Q_\iota)^c} \sum_{j \in I}
	\int_{2^{-m}}^{r_j} A_0 \e^{-t^2 A_0} b_j (\mathrm x) \; \dd t \Bigr| >
	\frac{\sqrt{\pi} \alpha}{8} \Bigr\} \Bigr|. \nonumber
  \intertext{By the Tchebychev inequality we get}
  \le\strut& \frac{C}{\alpha^2} \Bigl\|
        {\mathbf{1}}_{(\cup_{\iota \in I} 4 Q_\iota)^c} \sum_{j \in I}
	\int_{2^{-m}}^{r_j} A_0 \e^{-t^2 A_0} b_j \; \dd t \Bigr\|_{L^2}^2.
	\label{IAbschRueckspr}
\end{align}
In order to estimate this norm we take $u \in L^2(\Omega)$ with $\| u \|_{L^2}
= 1$. Then
\begin{align}
  & \Bigl| \int_\Omega u {\mathbf{1}}_{(\cup_{\iota \in I} 4 Q_\iota)^c}
        \sum_{j \in I} \int_{2^{-m}}^{r_j} A_0 \e^{-t^2 A_0} b_j \; \dd t
	\Bigr| \le \sum_{j \in I} \int_\Omega |u|
	{\mathbf{1}}_{(\cup_{\iota \in I} 4 Q_\iota)^c} \Bigl|
	\int_{2^{-m}}^{r_j} A_0 \e^{-t^2 A_0} b_j \; \dd t \Bigr|. \nonumber
  \intertext{We now split the integration over $\Omega$ into frame-like pieces
	and apply the Cauchy-Schwarz inequality. Note that the characteristic
	function results in the sum over $l$ starting only at $l = 2$.}
  \le\strut& \sum_{j \in I} \sum_{l=2}^\infty
	\int_{(2^{l+1} Q_j \setminus 2^l Q_j) \cap \Omega} |u| \Bigl|
	\int_{2^{-m}}^{r_j} A_0 \e^{-t^2 A_0} b_j \; \dd t \Bigr|
	\label{IRueckspr} \\
  \le\strut& \sum_{j \in I} \sum_{l=2}^\infty \bigl\| u
	\bigr\|_{L^2((2^{l+1} Q_j \setminus 2^l Q ) \cap \Omega)} \Bigl\|
	\int_{2^{-m}}^{r_j} A_0 \e^{-t^2 A_0} b_j \; \dd t
	\Bigr\|_{L^2((2^{l+1} Q_j \setminus 2^l Q ) \cap \Omega)}. \nonumber
\end{align}
In order to estimate the first factor of the last expression, we identify $u$
with its trivial extension by zero to $\R^d$. Then we let appear the maximal
operator to obtain for every $\mathrm y \in Q_j$
\[ \bigl\| u \bigr\|_{L^2((2^{l+1} Q_j \setminus 2^l Q ) \cap \Omega)}^2 \le
	\int_{2^{l+1}Q_j} |u|^2 \le C 2^{d(l+1)} |Q_j| \frac{1}{|2^{l+1}Q_j|}
	\int_{2^{l+1}Q_j} |u|^2 \le C 2^{dl} \ell_j^d \bigl[ M (|u|^2) \bigr]
	(\mathrm y).
\]
Applying the off-diagonal estimates for $t^2 A_0 \e^{- t^2 A_0}$ from
Lemma~\ref{offdiaglemma} with the set $Q_j \cap \Omega$ as $E$, $(2^{l+1}Q_j
\setminus 2^l Q_j) \cap \Omega$ as $F$, $d /(d-1)$ as $p$ and $b_j$ as $h$, we
get
\begin{align*}
  \bigl\| A_0 \e^{-t^2 A_0} b_j \bigr\|_{L^2((2^{l+1}Q_j \setminus 2^l Q_j)
	\cap \Omega)} &\le \frac{C}{t^2}t^{d/2 - (d - 1)}
	\e^{- c \frac{\dd(E,F)^2}{t^2}} \|b_j\|_{L^{d/(d-1)}} \\
  &\le \frac{C}{t^{1 + d/2}} \e^{- c \frac{4^l r_j^2}{t^2}}
	\|b_j\|_{L^{d/(d-1)}},
\end{align*}
since $\dd(E,F) \ge \dd(Q_j, 2^{l+1}Q_j \setminus 2^lQ_j) \ge c (2^l \ell_j -
\ell_j) \ge c (2^l - 1) r_j \ge c 2^l r_j$
thanks to $l \ge 2$.

According to \ref{CZZ(3)} the functions $b_j$ are from $W^{1,1}_D$. 
Exploiting the Sobolev embedding $W_D^{1,1} \hookrightarrow L^{d/(d-1)}$
 (cf.\@ Remark~\ref{r-remain}~\ref{r-remain:ii})
\begin{equation} \label{PoincSobVerweis}
  \|b_j\|_{L^{d/(d-1)}} \le C \| b_j \|_{W^{1,1}} \le C \alpha
	|Q_j| \le C \alpha \ell_j^d.
\end{equation}
Putting all this together we find for our second factor
\begin{align*}
  \Bigl\| \int_{2^{-m}}^{r_j} A_0 \e^{- t^2 A_0} b_j \; \dd t
	\Bigr\|_{L^2((2^{l+1}Q_j \setminus 2^l Q_j)\cap \Omega )} &\le
	\int_{2^{-m}}^{r_j} \| A_0 \e^{- t^2 A_0} b_j
	\|_{L^2((2^{l+1}Q_j \setminus 2^l Q_j)\cap \Omega)} \; \dd t \\
  &\le C \alpha \ell_j^d \int_{2^{-m}}^{r_j} \frac{1}{t^{1 + d/2}}
	\e^{- c \frac{4^l r_j^2}{t^2}} \; \dd t \displaybreak[0]\\
  &= C \alpha \ell_j^d \int_{c 4^l}^{c 4^l r_j^2 4^{m}} \Bigl(
	\frac{\sqrt{s}}{2^l r_j} \Bigr)^{1 + d/2}
	\e^{-s} 2^l r_j s^{-3/2} \; \dd s \displaybreak[0] \\
  &\le C \alpha \ell_j^d r_j^{-d/2} 2^{-ld/2} \int_{c 4^l}^\infty s^{- 1 + d/4}
	\e^{-s} \; \dd s,
  \intertext{which is now independent of $m \in \N$. Since the integrand is
	positive and $r_j \ge 2\ell_j$ we may continue}
  &\le C \alpha \ell_j^{d/2} 2^{-ld/2} \e^{-c 4^l} \int_{c 4^l}^\infty
	s^{- 1 + d/4} \e^{-s + c4^l} \; \dd s \displaybreak[0] \\
  &= C \alpha \ell_j^{d/2} 2^{-ld/2} \e^{-c 4^l} \int_0^\infty
        (\sigma + c4^l)^{- 1 + d/4} \e^{-\sigma} \; \dd \sigma
	\displaybreak[0]\\
  &= C \alpha \ell_j^{d/2} 4^{-l} \e^{-c4^l} \int_0^\infty
        (\sigma 4^{-l} + c)^{- 1 + d/4} \e^{-\sigma} \; \dd \sigma.
\end{align*}
This last integral is bounded uniformly in $l \ge 2$. In fact, if $d > 4$, then
we estimate $4^{-l} \le 4^{-2}$ and if $d \le 4$, we may just estimate by
dropping out the whole $\sigma 4^{-l}$. So, estimating once more $4^{-l} \le
4^{-2}$, we end up with
\[ \Bigl\| \int_{2^{-m}}^{r_j} A_0 \e^{- t^2 A_0} b_j \; \dd t
        \Bigr\|_{L^2((2^{l+1}Q_j \setminus 2^l Q_j) \cap \Omega)} \le C \alpha
	\ell_j^{d/2} \e^{-c 4^l}.
\]
Coming back to \eqref{IRueckspr} we thus have
\[ \int_{(2^{l+1} Q_j \setminus 2^l Q_j) \cap \Omega} |u| \Bigl|
	\int_{2^{-m}}^{r_j} A_0 \e^{-t^2 A_0} b_j \; \dd t \Bigr| \le C
	2^{ld/2} \ell_j^{d/2} \bigl( \bigl[ M(|u|^2) \bigr] (\mathrm y)
	\bigr)^{1/2} \alpha \ell_j^{d/2} \e^{-c 4^l}
\]
for every $\mathrm y \in Q_j$. Averaging over $\mathrm y$ the inequality
remains valid and we get
\begin{align*}
  & \sum_{j \in I} \sum_{l=2}^\infty
	\int_{(2^{l+1} Q_j \setminus 2^l Q_j) \cap \Omega} |u| \Bigl|
	\int_{2^{-m}}^{r_j} A_0 \e^{-t^2 A_0} b_j \; \dd t \Bigr| \\
  \le\strut& C \sum_{j \in I} \sum_{l=2}^\infty \frac{1}{|Q_j|} \int_{Q_j}
	\alpha 2^{ld/2} \ell_j^d \e^{-c4^l} \bigl( \bigl[ M (|u|^2) \bigr]
	(\mathrm y) \bigr)^{1/2} \; \dd \mathrm y \displaybreak[0] \\
  \le\strut& C \alpha \sum_{j \in I} \sum_{l=2}^\infty 2^{ld/2} \e^{-c4^l}
        \int_{Q_j} \bigl( \bigl[ M (|u|^2) \bigr](\mathrm y) \bigr)^{1/2}
	\; \dd \mathrm y. \displaybreak[0]
\intertext{The sum over $l$ now turns out to be convergent, so we continue}
  \le\strut& C \alpha \int_{\R^d} \sum_{j \in I} {\bf 1}_{Q_j}(\mathrm y)
	\bigl( \bigl[ M (|u|^2) \bigr](\mathrm y) \bigr)^{1/2}
	\; \dd \mathrm y   \le\strut C \alpha \int_{\bigcup_{j\in I} Q_j} \bigl( \bigl[ M (|u|^2)
        \bigr](\mathrm y) \bigr)^{1/2} \; \dd \mathrm y,
\end{align*}
where we used \ref{CZZ(5)} in the last step. By the Kolmogorov inequality
(cf.\@ \cite[IV.7.19]{Tor86}) we have
\[ \int_{\bigcup_{j\in I} Q_j} \bigl( \bigl[ M (|u|^2) \bigr](\mathrm y)
	\bigr)^{1/2} \; \dd \mathrm y \le C \Bigl| \bigcup_{j \in I} Q_j
	\Bigr|^{1/2} \| |u|^2 \|_{L^1(\R^d))}^{1/2} \le C \Bigl(
	\sum_{j \in I} |Q_j| \Bigr)^{1/2} \|u\|_{L^2}.
\]
Coming back to \eqref{IAbschRueckspr}, we thus finally achieve (observe that
$\|u\|_{L^2} = 1$)
\begin{align*}
  & \Bigl| \Bigl\{ \mathrm x \in \Omega \setminus \bigcup_{\iota \in I} 4
	Q_\iota : \Bigl| \sum_{j \in I} \int_{2^{-m}}^{r_j} A_0 \e^{-t^2 A_0}
	b_j (\mathrm x) \; \dd t \Bigr| > \frac{\sqrt{\pi}\alpha}{8} \Bigr\}
	\Bigr| \\
  \le\strut& \frac{C}{\alpha^2} \Bigl\|
        {\bf 1}_{(\bigcup_{\iota \in I} 4 Q_\iota)^c} \sum_{j \in I}
	\int_{2^{-m}}^{r_j} A_0 \e^{- t^2 A_0} b_j \; \dd t \Bigr\|_{L^2}^2
	\le C \sum_{j \in I} |Q_j| \le \frac{C}{\alpha^p} \|f\|_{W^{1,p}_D}^p
\end{align*}
by \ref{CZZ(4)}.

We turn to the estimate of the second addend on the right hand side of
\eqref{e-split}. For this task, we will again need the notion of a bounded
$H^\infty$-calculus. The definition and further information can be
found in \cite{DHP03} or \cite{Haa06}.

We define the function
\[ \psi(z) := \int_1^\infty z \e^{-t^2 z} \; \dd t, \quad \Re(z) > 0.
\]
We show that
\[ \psi \in {\mathcal H}^\infty_0(\Sigma_\mu) := \Bigl\{ f : \Sigma_\mu \to \C
	\text{ analytic and } \exists \eps > 0 \text{ s.t. } |f(z)| \le C
	\frac{|z|^\eps}{(1 + |z|)^{2\eps}} \text{ for all } z \in \Sigma_\mu
	\Bigr\}
\]
for every $\mu \in {]0, \pi/2[}$, where $\Sigma_\mu := \{ z \in \C : |\arg(z)|
< \mu \}$. In fact we have substituting $\tau = t^2 \Re(z) - \Re(z)$
\begin{align*}
  \Bigl| \frac{(1 + |z|)^{2\epsilon}}{|z|^\epsilon} \psi(z) \Bigr| &\le
        \int_1^\infty |z|^{1 - \epsilon} (1 + |z|)^{2 \epsilon}
        \e^{- t^2 \Re (z) } \; \dd t \\
  &= \int_0^\infty |z|^{1 - \epsilon} (1 + |z|)^{2 \epsilon} \e^{- \tau}
        \e^{- \Re(z)} \frac{1}{2 \sqrt{\Re(z)(\tau + \Re(z))}} \; \dd \tau \\
  &\le C |z|^{1/2 - \epsilon} (1 + |z|)^{2 \epsilon} \e^{- c |z|} \int_0^\infty
        \frac{\e^{-\tau}}{\sqrt \tau} \; \dd \tau,
\end{align*}
since $\Re(z) \sim |z|$, thanks to $|\arg(z)| < \mu < \pi/2$. Thus, we may
choose $\epsilon \in {]0, 1/2[}$.

Furthermore, we have for every $z\in \C$ with $\Re(z) > 0$ and every $r > 0$
\[ \frac{1}{r} \psi(r^2 z) = \int_{r}^\infty z \e^{-t^2 z} \; \dd t,
\]
so since $A_0$ has a bounded $H^\infty$-calculus on $L^q$, see
Proposition~\ref{p-semigr}~\ref{p-semigr:ii}, we have the equality of operators
\[ \int_{r}^\infty A_0 \e^{-t^2 A_0} \; \dd t = \frac{1}{r} \psi(r^2 A_0)
\]
in $L^q$ for every $1 < q < 2$. Thus, denoting $I_k := \{ j \in I : r_j
\vee 2^{-m} = 2^k \}$ for every $k \in \Z$, we get
\[ \sum_{j \in I} \int_{r_j \vee 2^{-m}}^\infty A_0 \e^{-t^2 A_0} b_j \; \dd t
	= \sum_{k \in \Z} \sum_{j \in I_k} \frac{1}{r_j \vee 2^{-m}}
	\psi \bigl( (r_j \vee 2^{-m})^2 A_0 \bigr) b_j = \sum_{k \in \Z}
	\psi(4^k A_0) \sum_{j \in I_k} \frac{b_j}{r_j \vee 2^{-m}}.
\]
After these preparations we actually start the estimate. Let $q := d/(d-1)$ be
the Sobolev conjugated index to $1$. Using the Tchebychev inequality for this
$q$, we get
\begin{align*}
  & \Bigl| \Bigl\{ \mathrm x \in \Omega : \Bigl| \sum_{j \in I}
	\int_{r_j \vee 2^{-m}}^\infty A_0 \e^{-t^2 A_0} b_j (\mathrm x)
	\; \dd t \Bigr| > \frac{\sqrt{\pi}\alpha}{8} \Bigr\} \Bigr| \\
  \le\strut& \frac{C}{\alpha^q} \Bigl\| \sum_{j \in I}
        \int_{r_j \vee 2^{-m}}^\infty A_0 \e^{-t^2 A_0} b_j \; \dd t
	\Bigr\|_{L^q}^q = \frac{C}{\alpha^q} \Bigl\| \sum_{k \in \Z}
	\psi(4^k A_0) \sum_{j \in I_k} \frac{b_j}{r_j \vee 2^{-m}}
	\Bigr\|_{L^q}^q.
%
\intertext{Observe, that the sum over $k$ is in fact a finite sum, since $I_k$
is empty for $k < -m$ by definition and for large $k$ by the finite measure of
$E$, cf.\@ \eqref{e-|E|}. Thus, there is no convergence problem in applying
Lemma~\ref{RHinftylemma}, which helps to estimate this expression further by}
\le\strut & \frac{C}{\alpha^q} \biggl\| \Bigl( \sum_{k \in \Z} \Bigl|
        \sum_{j \in I_k} \frac{b_j}{r_j \vee 2^{-m}} \Bigr|^2 \Bigr)^{1/2}
	\biggr\|_{L^q}^q =
	\frac{C}{\alpha^q} \int_\Omega \Bigl( \sum_{k \in \Z} \Bigl|
	\sum_{j \in I_k} \frac{b_j(\mathrm x)}{r_j \vee 2^{-m}} \Bigr|^2
	\Bigr)^{q/2} \; \dd \mathrm x.
\intertext{Now, by \ref{CZZ(5)} the sum over $k$ is finite for every $\mathrm x
\in \Omega$ and the number of addends is even bounded uniformly in $\mathrm x$
and in $m$, so by the equivalence of norms in finite dimensional spaces, we may
continue to estimate by}
\le \strut & \frac{C}{\alpha^q} \int_\Omega \Bigl( \sum_{k \in \Z} \Bigl|
        \sum_{j \in I_k} \frac{b_j(\mathrm x)}{r_j \vee 2^{-m}} \Bigr| \Bigr)^q
	\; \dd \mathrm x \le \frac{C}{\alpha^q} \int_\Omega \Bigl(
	\sum_{j \in I} \frac{|b_j(\mathrm x)|}{r_j \vee 2^{-m}} \Bigr)^q
	\; \dd \mathrm x.
\intertext{Next we estimate $r_j \vee 2^{-m}$ by $r_j$ and, using again the
equivalence of norms in the finite sum over $j$, we get}
\le \strut &\frac{C}{\alpha^q} \int_\Omega \sum_{j \in I}
	\frac{|b_j(\mathrm x)|^q}{r_j^q} \; \dd \mathrm x \le
	\frac{C}{\alpha^q} \sum_{j \in I} \ell_j^{- q} \int_\Omega \bigl|
	b_j(\mathrm x) \bigr|^q \; \dd \mathrm x,
\end{align*}
since $r_j \sim \ell_j$. Using once more the Sobolev embedding $W^{1,1}
\hookrightarrow L^{d/(d-1)} = L^q$, we see as in \eqref{PoincSobVerweis}
\[ \int_\Omega \bigl| b_j(x) \bigr|^q \; \dd x = \| b_j \|_{L^q}^q \le C \bigl(
	\alpha \ell_j^d \bigr)^q = C \alpha^q \ell_j^{dq}.
\]
Summarizing we have shown
\begin{align*}
  \Bigl| \Bigl\{ \mathrm x \in \Omega : \Bigl| \sum_{j \in I}
	\int_{r_j \vee 2^{-m}}^\infty A_0 \e^{-t^2 A_0} b_j (\mathrm x)
	\; \dd t \Bigr| > \frac{\sqrt{\pi}\alpha}{8} \Bigr\} \Bigr| &\le
	\frac{C}{\alpha^q} \sum_{j \in I} \ell_j^{-q} \alpha^q \ell_j^{dq} = C
	\sum_{j \in I} \ell_j^d\\
  &\le C \sum_{j \in I} |Q_j| \le \frac{C}{\alpha^p} \| f\|_{W^{1,p}_D}^p,
\end{align*}
using one final time \ref{CZZ(4)}.
\end{proof}
It remains to prove Lemma~\ref{RHinftylemma}, which serves as a substitute for
Lemma~4.14 in \cite{auschmem}. We give a different proof, that instead of
$L^p$-$L^2$ off-diagonal estimates relies on the $H^\infty$ functional calculus
of the operator and gives the assertion for the full range of $1 < q < \infty$.
%
\begin{lemma} \label{RHinftylemma}
Let $1 < q < \infty$, let $-B$ be the generator of a bounded analytic semigroup
on $L^q$, such that $B$ and $B'$ admit bounded $H^\infty$-calculi on
$L^q$ and $L^{q'}$, respectively and let $\psi \in H^\infty_0(\Sigma_\phi)$ for
some $\phi \in \left] \varphi_B^\infty, \pi \right]$, where $\varphi_B^\infty$
is the $H^\infty$-angle of $B$. Then for every choice of functions $f_k \in
L^q$, $k \in \Z$, we have
\[ \Bigl\| \sum_{k \in \Z} \psi(4^k B) f_k \Bigr\|_{L^q} \le C \Bigl\| \Bigl(
	\sum_{k \in \Z} |f_k|^2 \Bigr) ^{1/2} \Bigr\|_{L^q},
\]
whenever the left hand side is convergent.
\end{lemma}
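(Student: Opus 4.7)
The plan is to dualize and reduce the claim to the discrete dyadic Littlewood--Paley square function estimate for $B'$ on $L^{q'}$, which in turn follows from the fact that $B'$ has a bounded $H^\infty$-calculus there.

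First, I would test the sum against an arbitrary $g \in L^{q'}$ with $\|g\|_{L^{q'}} = 1$. Since the family of operators $\psi(4^k B)$ satisfies $\psi(4^k B)^* = \overline\psi(4^k B')$ on $L^{q'}$ (by general properties of the $H^\infty$-calculus, using that $\overline\psi$ lies in the same class $H^\infty_0(\Sigma_\phi)$ as $\psi$), I obtain by the assumed convergence and Fubini/summation
\[
 \Bigl\langle g,\sum_{k\in\Z}\psi(4^kB)f_k\Bigr\rangle
 = \sum_{k\in\Z}\int_\Omega \overline{\psi(4^kB')g}\,f_k.
\]
Applying Cauchy--Schwarz to the discrete sum pointwise and then Hölder in the ambient $L^q$ pairing gives
\[
 \Bigl|\Bigl\langle g,\sum_{k\in\Z}\psi(4^kB)f_k\Bigr\rangle\Bigr|
 \le \Bigl\|\Bigl(\sum_{k\in\Z}|f_k|^2\Bigr)^{1/2}\Bigr\|_{L^q}\,
      \Bigl\|\Bigl(\sum_{k\in\Z}|\psi(4^kB')g|^2\Bigr)^{1/2}\Bigr\|_{L^{q'}}.
\]

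The key step is then the uniform bound
\begin{equation}\label{e-discreteLP}
 \Bigl\|\Bigl(\sum_{k\in\Z}|\psi(4^kB')g|^2\Bigr)^{1/2}\Bigr\|_{L^{q'}}
 \le C\,\|g\|_{L^{q'}},
\end{equation}
from which the lemma follows after taking the supremum over admissible $g$. To prove \eqref{e-discreteLP}, I would invoke the continuous Littlewood--Paley--Stein square function estimate of Cowling--Doust--McIntosh--Yagi: since $B'$ has a bounded $H^\infty$-calculus on $L^{q'}$ and $\psi\in H^\infty_0(\Sigma_\phi)$ with $\phi>\varphi^\infty_{B'}=\varphi^\infty_B$, one has
\[
 \Bigl\|\Bigl(\int_0^\infty|\psi(tB')g|^2\,\frac{\dd t}{t}\Bigr)^{1/2}\Bigr\|_{L^{q'}}
 \le C\|g\|_{L^{q'}}.
\]
The discrete version \eqref{e-discreteLP} follows by writing $\sum_k|\psi(4^kB')g|^2$ as a Riemann sum approximation of the above integral: for any $t\in[4^k,4^{k+1}]$, the difference $\psi(4^kB')-\psi(tB')$ can itself be written $\eta_t(4^kB')$ for an auxiliary $\eta_t\in H^\infty_0$ with $\sup_t\|\eta_t\|_{H^\infty_0}<\infty$, so comparing $\psi(4^kB')g$ with the average of $\psi(tB')g$ over the dyadic interval and using the triangle inequality in $L^{q'}(\ell^2)$ brings us back to continuous square functions.

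The main obstacle is the discrete square function bound \eqref{e-discreteLP}; once \eqref{e-discreteLP} is in hand, the duality argument is routine. A clean alternative, which bypasses the dyadic-to-continuous passage, is to combine Khintchine's inequality with the $R$-boundedness (implied by the bounded $H^\infty$-calculus together with the Gaussian bounds, cf.\ Proposition~\ref{p-semigr}~\ref{p-semigr:ii} and Proposition~\ref{t-gausss}) of the family $\{\psi(4^kB)\}_{k\in\Z}$; in our setting both routes are available, and I would present whichever requires the least machinery to be imported.
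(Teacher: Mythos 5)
Your overall strategy -- dualize against $g\in L^{q'}$, split off the square function of the $f_k$ by pointwise Cauchy--Schwarz, and reduce everything to a discrete square function estimate for $B'$ on $L^{q'}$ -- is viable and lands in the same circle of ideas as the paper, but the two arguments are organized quite differently. The paper never isolates the estimate \eqref{e-discreteLP}: it factors $\psi=\psi_1\psi_2$ with $\psi_1,\psi_2\in H^\infty_0(\Sigma_\phi)$, inserts independent Rademacher variables $\epsilon_k$ and uses their orthogonality to turn the diagonal pairing $\sum_k\langle\psi_2(4^kB)f_k,\overline{\psi_1}(4^kB')g\rangle$ into a product of two randomized sums. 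The $f$-side is then handled by the $\cR$-bounded $H^\infty$-calculus of $B$ (which follows from the bounded calculus alone via \cite[Theorem~5.3]{KW01}, since $L^q$ has property $(\alpha)$ -- no Gaussian bounds are needed, and indeed the lemma is stated for an abstract $B$, so your appeal to Propositions~\ref{t-gausss} and \ref{p-semigr} is out of place here) plus Khintchine, while the $g$-side is disposed of by the operator-norm bound of Lemma~\ref{KaltonWeisLemma} for $B'$, giving exactly $\|g\|_{L^{q'}}$. What the factorization buys is that neither side ever requires a genuine square function theorem for the adjoint.

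The one step of your proposal I would not accept as written is the passage from the continuous Cowling--Doust--McIntosh--Yagi square function to the discrete one \eqref{e-discreteLP} by Riemann sums. Writing $\psi(4^kB')-\psi(tB')=\zeta_{4^k/t}(tB')$ with $\zeta_s(w)=\psi(sw)-\psi(w)$, $s\in[1/4,1]$, the error term becomes a continuous square function whose symbol varies with $t$; uniform boundedness of $\{\zeta_s\}$ in $H^\infty_0$ does not by itself let you invoke the CDMY theorem, so this needs either a version of that theorem uniform over bounded symbol families or, more simply, randomization. Your proposed alternative -- Khintchine plus the uniform bound on the randomized sums $\sum_j\epsilon_j\overline\psi(4^jB')$ -- does close this gap cleanly (it is precisely Lemma~\ref{KaltonWeisLemma} applied to $B'$, followed by Khintchine's inequality), and with that substitution your proof is correct; note also that, as in the paper, one should first work with finite sums $|k|\le N$ and pass to the limit, since only convergence of the left-hand side is assumed.
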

%
Before starting the proof, we observe, that thanks to \cite[Theorem~5.3]{KW01},
the operator $B$ even has an $\cR$-bounded $H^\infty$-calculus of angle
$\varphi_B^\infty$ on $L^q$, which means, that for every $\phi >
\varphi_B^\infty$ and every bounded set of functions $\Xi \subseteq
H^\infty(\Sigma_\phi)$ the set of operators $\{ \xi(A) : \xi \in \Xi \}$ is
$\cR$-bounded in $\cL(L^q)$. Here a set $\cT \subseteq \cL(L^q)$ is called
$\cR$-bounded, if there is a constant $C \ge 0$, such that for every $N \in
\N$, for every choice of functions $f_k \in L^q$, $k=1,\dots, N$, operators
$T_k \in \cT$, $k = 1, \dots, N$, and $\{-1,1\}$-valued, symmetric and
independent random variables $\epsilon_k$, $k=1, \dots, N$, on some probability
space $S$, we have
\[ \Bigl\| \sum_{k=1}^N \epsilon_k T_k f_k \Bigr\|_{L^2(S; L^q)} \le C \Bigl\|
	\sum_{k=1}^N \epsilon_k f_k \Bigr\|_{L^2(S; L^q)}.
\]
In the proof of Lemma~\ref{RHinftylemma}, we will use the following Lemma from
\cite[Lemma~4.1]{KW01} (see also \cite{DDHPV04}).
%
\begin{lemma}\label{KaltonWeisLemma}
Let $1 < q < \infty$, let $-B$ be the generator of a bounded analytic semigroup
on $L^q$, such that $B$ admits a bounded $H^\infty$-calculus on $L^q$ and let
$\psi \in H^\infty_0(\Sigma_\phi)$ for some $\phi \in {] \varphi_B^\infty,
\pi]}$. Then there is a constant $C \ge 0$, such that for every bounded
sequence $(\alpha_k)_{k \in \Z} \subseteq \C$ and every $t > 0$ we have
\[ \Bigl\| \sum_{k \in \Z} \alpha_k \psi(2^k t B) \Bigr\|_{\cL(L^q)} \le C
        \sup_{k \in \Z} |\alpha_k|.
\]
\end{lemma}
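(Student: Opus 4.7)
The plan is to interpret the sum as the image under the $H^\infty$-calculus of a single bounded holomorphic symbol. For $z \in \Sigma_\phi$ set
\[ \Psi(z) := \sum_{k \in \Z} \alpha_k \psi(2^k t z). \]
If one can show (a) $\Psi$ is holomorphic on $\Sigma_\phi$ with $\|\Psi\|_{H^\infty(\Sigma_\phi)} \le C \sup_k |\alpha_k|$ and (b) $\Psi(B)$ coincides with the given series in a suitable operator-theoretic sense, then the bounded $H^\infty$-calculus of $B$ immediately yields the asserted norm bound.

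Step (a) rests only on the defining decay of $\psi \in H^\infty_0(\Sigma_\phi)$: there exist $C_0,\eps > 0$ with $|\psi(z)| \le C_0 |z|^\eps / (1+|z|)^{2\eps}$ on $\Sigma_\phi$. Fixing $z$ and writing $u_k := 2^k t |z|$, one estimates
\[ \sum_{k \in \Z} |\psi(2^k t z)| \le C_0 \sum_{k \in \Z} \frac{u_k^\eps}{(1+u_k)^{2\eps}} \le C_0 \Bigl( \sum_{u_k \le 1} u_k^\eps + \sum_{u_k > 1} u_k^{-\eps} \Bigr), \]
and both pieces are geometric series with common ratio $2^{\pm\eps}$, hence bounded by a constant $M$ that depends only on $\psi$, independently of $z$ and $t$. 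This yields absolute and locally uniform convergence of the series defining $\Psi$, so $\Psi$ is holomorphic on $\Sigma_\phi$, and $\|\Psi\|_{H^\infty(\Sigma_\phi)} \le M \sup_k |\alpha_k|$.

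For step (b) I would first treat finite truncations. For every finite $F \subseteq \Z$ the partial sum $\Psi_F(z) := \sum_{k \in F} \alpha_k \psi(2^k t z)$ still belongs to $H^\infty_0(\Sigma_\phi)$, and by linearity of the Dunford--Riesz $H^\infty_0$-calculus $\Psi_F(B) = \sum_{k \in F} \alpha_k \psi(2^k t B)$. Invoking the bounded $H^\infty$-calculus of $B$ together with the uniform bound from step (a),
\[ \Bigl\| \sum_{k \in F} \alpha_k \psi(2^k t B) \Bigr\|_{\cL(L^q)} = \|\Psi_F(B)\|_{\cL(L^q)} \le C \|\Psi_F\|_{H^\infty(\Sigma_\phi)} \le C M \sup_k |\alpha_k|, \]
with $C$ independent of $F$. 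The passage to $F \nearrow \Z$ is then handled by McIntosh's convergence lemma: since $\Psi_F \to \Psi$ pointwise on $\Sigma_\phi$ with a uniform $H^\infty$-bound, $\Psi_F(B) \to \Psi(B)$ in the strong operator topology on $L^q$, and the bound survives the limit.

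The main obstacle is the strong-convergence passage in step (b): the infinite series need not converge in operator norm, and realising $\sum_{k \in \Z} \alpha_k \psi(2^k t B)$ as a bounded operator requires the convergence lemma of the $H^\infty$-calculus, applied first on the dense subspace $B(1+B)^{-2}(L^q)$ (where the calculus is defined directly by a contour integral) and then extended by the uniform norm bound. In the intended use in Lemma~\ref{RHinftylemma} the relevant sum over $k \in \Z$ is in fact finite thanks to \ref{CZZ(5)} and the finite measure of $E$, so the uniform partial-sum bound already suffices there.
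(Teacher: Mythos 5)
Your argument is correct, and it is essentially the standard (Kalton--Weis) proof of this statement; the paper itself does not prove the lemma at all, but simply quotes it from \cite[Lemma~4.1]{KW01}, so your proposal supplies the argument the paper delegates to the literature. The core computation is right: writing $u_k = 2^k t|z|$, the decay $|\psi(z)|\le C_0|z|^{\eps}(1+|z|)^{-2\eps}$ makes $\sum_k|\psi(2^k t z)|$ a pair of geometric series bounded by a constant depending only on $\psi$, uniformly in $z\in\Sigma_\phi$ and $t>0$, so $\Psi(z)=\sum_k\alpha_k\psi(2^ktz)$ lies in $H^\infty(\Sigma_\phi)$ with norm $\le M\sup_k|\alpha_k|$, and the bounded $H^\infty$-calculus converts this into the operator bound. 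Two small remarks on step (b): since $\Psi$ is in general only in $H^\infty(\Sigma_\phi)$ and not in $H^\infty_0(\Sigma_\phi)$, defining $\Psi(B)$ and passing from the finite truncations $\Psi_F(B)$ to the limit via the convergence lemma uses that $B$ is injective with dense range (or the usual regularization $\Psi(B)=(1+B)^2B^{-1}(\rho\Psi)(B)$ with $\rho(z)=z(1+z)^{-2}$); this is part of the standard conventions for a bounded $H^\infty$-calculus and is harmless here, where the operator actually applied is the invertible $A_0$. Moreover, as you note, in the only place the lemma is used (the proof of Lemma~\ref{RHinftylemma}, where it is applied to $\overline{\psi_1}(4^jB')$) the relevant index set is finite, so the uniform bound on the partial sums $\Psi_F(B)$ already suffices for the paper's purposes.
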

%
%
\begin{proof}[Proof of Lemma~\ref{RHinftylemma}]
Since $\psi \in H^\infty_0(\Sigma_\phi)$, there exists an $\epsilon > 0$ with 
$|\psi(z)| \le C |z|^\epsilon/(1 + |z|)^{2\epsilon}$ for all $z \in
\Sigma_\phi$. Let $\delta \in {]0, \epsilon[}$ and set
\[ \psi_1(z) := \frac{z^\delta}{(1 + z)^{2\delta}}, \quad \psi_2(z) :=
        \frac{(1 + z)^{2 \delta}}{z^\delta} \psi(z), \quad z \in \Sigma_\phi.
\]
Then we have $\psi_1, \psi_2 \in H^\infty_0(\Sigma_\phi)$, $\psi = \psi_1
\psi_2$ and $(\psi_1(B))' = \overline{\psi_1}(B')$.

Now, let $N \in \N$ and let $g \in L^{q'}$ with $\|g\|_{L^{q'}} = 1$, where
$1/q + 1/q' = 1$. Then for every family of $\{-1,1\}$-valued, symmetric and
independent random variables $\epsilon_k$, $k = -N, \dots, N$, on some
probability space $S$, we have
\[ \Bigl| \int_\Omega \sum_{k=-N}^N \bigl( \psi(4^k B) f_k \bigr) (x) g(x)
        \; \dd x \Bigr| = \Bigl| \int_S \sum_{k=-N}^N \epsilon_k^2(\sigma)
	\int_\Omega \bigl( \psi_2(4^k B) f_k \bigr) (x) \bigl(
	\overline{\psi_1} (4^k B') g \bigr) (x) \; \dd x \; \dd \sigma \Bigr|.
\]
Since the random variables $\epsilon_k$, $k = -N, \dots, N$, are independent
and thus orthogonal in $L^2(S)$, we may write this as
\begin{align*}
  &= \Bigl| \int_S \sum_{j,k = -N}^N \epsilon_k(\sigma) \epsilon_j(\sigma)
	\int_\Omega \bigl( \psi_2(4^k B) f_k \bigr) (x) \bigl(
	\overline{\psi_1} (4^j B') g \bigr) (x) \; \dd x \; \dd \sigma \Bigr|
	\\
  &\le \int_S \Bigl| \int_\Omega \sum_{k = -N}^N \epsilon_k(\sigma) \bigl(
	\psi_2(4^k B) f_k \bigr) (x) \sum_{j = -N}^N \epsilon_j(\sigma) \bigl(
	\overline{\psi_1} (4^j B') g \bigr) (x) \; \dd x \Bigr| \; \dd \sigma
  \intertext{and using twice the H\"older inequality we estimate by}
  &\le C \Bigl\| \sum_{k = - N}^N \epsilon_k \psi_2 (4^k B) f_k
	\Bigr\|_{L^2(S; L^q)} \Bigl\| \sum_{j = - N}^N \epsilon_j
        \overline{\psi_1} (4^j B') g \Bigr\|_{L^2(S; L^{q'})}.
\end{align*}
Now, in the first factor we use the $\cR$-bounded $H^\infty$-calculus of $B$.
Since the set of functions $\{ \psi_2(4^k \cdot) : k \in \Z \}$ is bounded in
$H^\infty(\Sigma_\phi)$, we get

\[ \Bigl\| \sum_{k= - N}^N \epsilon_k \psi_2(4^k B) f_k \Bigr\|_{L^2(S; L^q)}
	\le C \Bigl\| \sum_{k = - N}^N \epsilon_k f_k \Bigr\|_{L^2(S; L^q)} \le
	C \Bigl\| \Bigl( \sum_{k = - N}^N |f_k|^2 \Bigr)^{1/2} \Bigr\|_{L^q},
\]
where the last inequality follows from Khinchin's inequality
(cf.\@ \cite[1.10]{DJT95}).

In order to estimate the second factor, we apply Lemma~\ref{KaltonWeisLemma}
and get
\begin{align*}
  \Bigl\| \sum_{j = - N}^N \epsilon_j \overline{\psi_1} (4^j B') g
        \Bigr\|_{L^2(S; L^{q'})} &\le \Bigl( \int_S \Bigl\| \sum_{j= - N}^N
	\epsilon_j(\sigma) \overline{\psi_1} (2^{2j} B')
	\Bigr\|_{\cL(L^{q'})}^2 \|g\|_{L^{q'}}^2 \; \dd \sigma \Bigr)^{1/2} \\
  &\le \Bigl( \int_S \bigl( \sup_{j = -N}^N |\epsilon_j(\sigma)| \bigr)^2
        \; \dd \sigma \Bigr)^{1/2} = 1.
\end{align*}
This implies
\[ \Bigl\| \sum_{k = - N}^N \psi(4^k B) f_k \Bigr\|_{L^q} =
        \sup_{g \in L^{q'};\|g\|_{L^{q'}} = 1} \Bigl| \int_\Omega
        \sum_{k = - N}^N \bigl( \psi(4^k B) f_k \bigr) (x) g(x) \; \dd x
        \Bigr| \le C \Bigl\| \Bigl( \sum_{k = - N}^N |f_k|^2 \Bigr)^{1/2}
        \Bigr\|_{L^q}
\]
for every $N \in \N$. Letting $N \to \infty$ the assertion follows.
\end{proof}
Let us now come to the final step of the proof of the second assertion of
Theorem~\ref{t-mainsect}. Inequality~\eqref{e-weak} can be interpreted
as follows: $A_0^{1/2}$ is a continuous operator from $C^\infty_D(\Omega)$ --
equipped with the $W^{1,p}$-norm -- into the Lorentz space $L_{p,\infty}$, cf. 
\cite[Ch. 1.18.6]{triebel}. The space $L_{p,\infty}$ is identical (as a set)
with $(L^\infty,L^1)_{\frac 1p,\infty}$, and its quasinorm
$f \mapsto \sup_{t \ge 0} t^p |\{\mathrm x : |f(\mathrm x)| > t\} |$ 
is equivalent to the $(L^\infty,L^1)_{\frac 1p,\infty}$-norm 
(see \cite[Ch.~1.18.6]{triebel}); i.e. under a suitable renorming $L_{p,\infty}$
is an ordinary Banach space. Hence, $A_0^{1/2}$ uniquely extends by
density to a continuous operator from $W^{1,p}_D$ into $L_{p,\infty}$.
Thus, up to now, we have the two continuous mappings
\[ A_0^{1/2} : W^{1,2}_D \to L^2 
\]
and 
\[ A_0^{1/2} : W^{1,p}_D \to L_{p,\infty}
\]
for all $1 < p < 2$. Let $q \in {]1,2[}$ and choose $p \in {]1,q[}$. Using real
interpolation, this gives the continuous mapping
\[ A_0^{1/2} : (W^{1,p}_D, W^{1,2}_D)_{\theta,q} \to (L_{p,\infty},L^2)_{\theta,q}.
\]
Setting $\theta = \frac 2q \frac{q - p}{2 - p}$, the left hand side is equal to
$W^{1,q}_D$ by Theorem~\ref{interp} and the right hand side equals $L^q$
according to \cite[Thm.~2 Ch.~18.6]{triebel}.
This finishes the proof.
%
\begin{corollary} \label{c-defberfrac}
 Under the above assumptions, one has for $p \in {]1,2]}$ and $\beta \in
 {]0,\frac 12[}$
 \begin{equation} \label{e-defberfrac}
  \mathrm{dom}_{L^p}(A_0^\beta) = [L^p,W^{1,p}_D]_{2\beta}.
\end{equation}
\end{corollary}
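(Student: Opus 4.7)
The plan is to combine Theorem~\ref{t-mainsect} with the classical identification of complex interpolation spaces between fractional power domains of an operator with bounded imaginary powers. Fix $p \in {]1,2]}$ and $\beta \in {]0, 1/2[}$, and set $\theta := 2\beta \in {]0,1[}$.

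First, I would assemble the two ingredients. By Theorem~\ref{t-mainsect}, both $A_0^{1/2} : W^{1,p}_D \to L^p$ and its inverse $A_0^{-1/2} : L^p \to W^{1,p}_D$ are continuous, so $\mathrm{dom}_{L^p}(A_0^{1/2}) = W^{1,p}_D$ with equivalent norms. By Proposition~\ref{p-semigr}~\ref{p-semigr:ii}, $A_0$ admits a bounded $H^\infty$-calculus on $L^p$ for every $p \in {]1,\infty[}$, hence in particular has bounded imaginary powers on $L^p$.

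Next, I would invoke the standard interpolation identity for fractional power domains of BIP operators (see, e.g., \cite[Ch.~1.15.3]{triebel}): if a sectorial operator $B$ on a Banach space $X$ has bounded imaginary powers, then for all $0 \le \alpha_0 < \alpha_1$ and all $\eta \in {]0,1[}$,
\[
  [\mathrm{dom}_X(B^{\alpha_0}), \mathrm{dom}_X(B^{\alpha_1})]_\eta = \mathrm{dom}_X(B^{(1-\eta)\alpha_0 + \eta \alpha_1}),
\]
with equivalent norms, where $\mathrm{dom}_X(B^0) := X$. Applying this with $X = L^p$, $B = A_0$, $\alpha_0 = 0$, $\alpha_1 = \tfrac12$ and $\eta = \theta = 2\beta$ gives
\[
  [L^p, \mathrm{dom}_{L^p}(A_0^{1/2})]_{2\beta} = \mathrm{dom}_{L^p}(A_0^{\beta}).
\]
Substituting the identification $\mathrm{dom}_{L^p}(A_0^{1/2}) = W^{1,p}_D$ from the previous step yields \eqref{e-defberfrac}.

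There is essentially no obstacle beyond correctly invoking the BIP interpolation theorem; both hypotheses needed for it (sectoriality and bounded imaginary powers on $L^p$) have already been established in Proposition~\ref{p-basicl2} and Proposition~\ref{p-semigr}, and the base case $\alpha_0 = 0$ is covered by the cited reference since $A_0$ is boundedly invertible on $L^p$ (the constant $+1$ in $A_0 = -\nabla \cdot \mu \nabla + 1$ ensures this).
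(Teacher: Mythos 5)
Your proposal is correct and takes essentially the same route as the paper: the paper's proof likewise combines the bounded imaginary powers of $A_0$ from Proposition~\ref{p-semigr}~\ref{p-semigr:ii} with the classical interpolation identity for fractional power domains in \cite[Ch.~1.15.3]{triebel}, together with the identification $\mathrm{dom}_{L^p}(A_0^{1/2}) = W^{1,p}_D$ from Theorem~\ref{t-mainsect}. Nothing further is needed.
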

%
\begin{proof}
 The operator $A_0$ admits bounded imaginary powers, according to
 Proposition~\ref{p-semigr}~\ref{p-semigr:ii}. Hence, \eqref{e-defberfrac}
 follows from a classical result, see \cite[Ch.~1.15.3]{triebel}.
\end{proof}

\begin{remark} \label{r-interpol00}
 In view of this result it would be highly interesting to determine also the
 interpolation spaces in formula~\eqref{e-defberfrac}. We suggest the formula
 \begin{equation} \label{e-sugg}
  [L^p,W^{1,p}_D]_{\theta} = \begin{cases}
	H^{\theta,p}, \; \text{if} \; \theta < \frac {1}{p} \\
	H_D^{\theta,p}, \; \text{if} \; \theta > \frac {1}{p},
	\end{cases}
 \end{equation}
 $H^{\theta,p}$ being the space of Bessel potentials and $H_D^{\theta,p}$ being
 the subspace which is defined via the trace-zero condition on $D$.
 Unfortunately, we are not able to prove this at present; but in the more
 restricted context of so called regular sets \eqref{e-sugg} is shown in
 \cite{ggkr}. Compare also \cite[Section~5]{HMRS09} for a simple
 characterization of regular sets in case of space-dimensions 2 and 3, and see
 also \cite{mitrea}.
\end{remark}
%
%
%
%
%
%
%
%
\section{Consequences} \label{sec-Cons}
%
%
%
%
%
%
%
%
\noindent
In this section we come back to the original motivation of our work, namely to
carry over  results which are known for divergence operators, when acting on
$L^p$ spaces, to the spaces from the scale $W^{-1,q}_D$, $ q \in {[2,\infty[}$,
compare also \cite{ausch/tcha01}, \cite[Section~5]{e/r/s}, \cite{hal/re0},
\cite{hal/re1}. In particular, this affects maximal parabolic regularity, which
is an extremely powerful tool for the treatment of linear and nonlinear
parabolic equations with nonsmooth data, see e.g.\@ \cite{pruess} or
\cite{hal/re0}. The crucial point is that this allows to treat
a discontinuous time-dependence of the right hand side,
which is relevant for applications. Moreover, the spaces
$W^{-1,q}_D$ allow to include distributional
right hand sides; the reader may think, e.g.\@ of electric surface densities,
concentrated on interfaces between different materials -- even when these interfaces
move in time.

\begin{definition} \label{d-positiv}
Following \cite[Ch.1.14]{triebel}, we call a densely defined operator $B$ on a Banach space $X$
positive, if it satisfies the resolvent estimate 
\[ \|(B+\lambda)^{-1}\|_{\mathcal L(X)} \le \frac {c}{1+\lambda}
\]
for a constant $c$ and all $\lambda \in [0,\infty[$.
(Note that a positive operator is sectorial in the sense of \cite[Ch.~1.1]{DHP03}.)
\end{definition}

Let us recall the notion of maximal parabolic regularity.
\begin{definition} \label{d-maxreg}
Let $1 < s < \infty$, let $X$ be a Banach space and let $J := \left]T_0, T \right[ 
\subseteq \R$ be a bounded interval. Assume that $B$ is a closed
operator in $X$ with dense domain $ D$ (in the sequel always equipped with the graph norm).
We say that $B$ satisfies \emph {maximal parabolic $L^s(J; X)$ regularity}, if for any
$f\in L^s(J; X)$ there exists a unique function $u \in W^{1,s}(J; X) \cap
L^s(J; D)$ satisfying
\[  u' + Bu = f,\quad \quad u(T_0) = 0,
\]
where the time derivative is taken in the sense of $X$-valued distributions on
$J$ (see \cite[Ch~III.1]{amannbuch}).
\end{definition}
%
%
\begin{remark} \label{r-ebed}
\begin{enumerate}
\item It is well known that the property of maximal parabolic regularity of
        an operator $B$ is independent of $s \in \left] 1, \infty \right[$ and
        the specific choice of the interval $J$ (cf.\@ \cite{dore}). Thus, in
	the following we will say for short that $B$ admits maximal parabolic
        regularity on $X$.
\item If an operator satisfies maximal parabolic regularity on a Banach space
	$X$, then its negative generates an analytic semigroup on $X$
	(cf.\@ \cite{dore}). In particular, a suitable left half plane belongs
	to its resolvent set.
\end{enumerate}
\end{remark}
%
%

\begin{lemma} \label{l-0451}
 Let $X,Y$ be two Banach spaces, where $X$ continuously and densely injects
 into $Y$. Assume that $B$ is a positive operator on $X$, such that $B^\beta :
 X \to Y$ is a topological isomorphism for some $\beta \in {]0,1]}$. Then the
 following holds true.
 \begin{enumerate}
  \item $B$ admits an extension $\widetilde B$ on $Y$, which also is a positive
	operator there.
  \item If $B$ admits an $H^\infty$-calculus, then $\widetilde B$ admits an
	$H^\infty$-calculus with the same $H^\infty$-angle.
  \item If $B$ satisfies maximal parabolic regularity on $X$, then
	$\widetilde B$ satisfies maximal parabolic regularity on $Y$.
 \end{enumerate}
\end{lemma}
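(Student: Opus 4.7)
The key observation is that $B^\beta$ realizes $Y$ as an isomorphic copy of $X$ via similarity, so that $B$ on $X$ pulls over to a well-defined operator on $Y$ possessing all the properties that $B$ has. Concretely, for part (i) I would set
\[
  \mathrm{dom}_Y(\widetilde B) := B^\beta \bigl( \mathrm{dom}_X(B) \bigr), \qquad \widetilde B (B^\beta x) := B^\beta (Bx) \text{ for } x \in \mathrm{dom}_X(B).
\]
Density of $\mathrm{dom}_Y(\widetilde B)$ in $Y$ is immediate from density of $\mathrm{dom}_X(B)$ in $X$ together with the isomorphism property of $B^\beta$. Since $B^\beta$ commutes with $(B+\lambda)^{-1}$ on $\mathrm{dom}_X(B^\beta) \supseteq \mathrm{dom}_X(B)$, and $(B+\lambda)^{-1}$ preserves $\mathrm{dom}_X(B)$, the operator
\[
  R_\lambda := B^\beta (B+\lambda)^{-1} B^{-\beta} : Y \to Y
\]
is well defined, maps $Y$ into $\mathrm{dom}_Y(\widetilde B)$, and a direct computation shows $R_\lambda (\widetilde B + \lambda) = \mathrm{id}$ on $\mathrm{dom}_Y(\widetilde B)$ and $(\widetilde B + \lambda) R_\lambda = \mathrm{id}$ on $Y$. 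Hence $R_\lambda = (\widetilde B + \lambda)^{-1}$, and
\[
  \bigl\| (\widetilde B + \lambda)^{-1} \bigr\|_{\mathcal L(Y)} \le \|B^\beta\|_{X \to Y} \, \|B^{-\beta}\|_{Y \to X} \cdot \frac{c}{1+\lambda},
\]
establishing positivity.

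For part (ii), the same similarity yields the $H^\infty$-calculus. For $\phi$ strictly larger than the $H^\infty$-angle of $B$ and $f \in H^\infty_0(\Sigma_\phi)$ (defined by a suitable Dunford integral for $\widetilde B$, which makes sense thanks to part (i)), one verifies the identity $f(\widetilde B) = B^\beta f(B) B^{-\beta}$ by inserting the definition $(\widetilde B - z)^{-1} = B^\beta (B-z)^{-1} B^{-\beta}$ into the Cauchy integral. Boundedness on $H^\infty$ then extends via the standard convergence lemma, and the norm estimate
\[
  \|f(\widetilde B)\|_{\mathcal L(Y)} \le \|B^\beta\|_{X \to Y} \, \|B^{-\beta}\|_{Y \to X} \, \|f(B)\|_{\mathcal L(X)}
\]
shows that the $H^\infty$-angle of $\widetilde B$ is at most that of $B$; the reverse inequality follows by the inverse similarity $B = B^{-\beta} \widetilde B B^\beta$.

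For part (iii), given $g \in L^s(J;Y)$, I observe that $B^{-\beta} g \in L^s(J;X)$ by continuity of $B^{-\beta} : Y \to X$. Maximal parabolic regularity of $B$ on $X$ then produces a unique $u \in W^{1,s}(J;X) \cap L^s(J; \mathrm{dom}_X(B))$ with $u' + Bu = B^{-\beta} g$ and $u(T_0) = 0$. Applying the isomorphism $B^\beta$ pointwise in $t$ and using the commutation $B^\beta B = \widetilde B B^\beta$, the function $v := B^\beta u$ lies in $W^{1,s}(J;Y) \cap L^s(J; \mathrm{dom}_Y(\widetilde B))$ and satisfies $v' + \widetilde B v = g$, $v(T_0) = 0$; uniqueness follows by reversing the argument with $B^{-\beta}$.

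The only subtlety I foresee is the bookkeeping around fractional powers and domains: one must make sure that $B^\beta$, which a priori is defined intrinsically on $X$, is truly the inverse of $B^{-\beta}$ viewed as a map $Y \to X$, and that the abstract $H^\infty$-calculus of $\widetilde B$ coincides with the one defined ad hoc by similarity. Both points are standard once $\widetilde B$ has been pinned down by its resolvent family as in part (i); beyond that, the argument is purely formal.
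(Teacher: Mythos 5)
Your argument is correct and follows essentially the same route as the paper: both rest on the commutation of $B^{\pm\beta}$ with the resolvent (via the Balakrishnan formula) and the similarity $B^{\beta}(B+\lambda)^{-1}B^{-\beta}$, which transports the resolvent estimate, the $H^\infty$-calculus and the parabolic solution operator from $X$ to $Y$. The paper merely outsources parts (ii) and (iii) to \cite[Prop.~2.11]{DHP03} and \cite[Lemma~5.12]{hal/re1}, whose underlying idea is exactly the conjugation you carry out explicitly.
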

\begin{proof} 
The well-known Balakrishnan formula $B^{-\beta}=\frac {\sin \pi \beta}{\pi} \int_0^\infty t^{-\beta}
(B+t)^{-1} \; \dd t$ (see \cite[Ch.~2.6]{pazy})
shows that the resolvent commutes with the fractional power $B^{-\beta} $. Hence,
for $\psi \in X$ and $\lambda \ge 0$ one can estimate
\begin{align*}
  \| (B + \lambda)^{-1} \psi \|_Y &= \bigl\| B^\beta (B + \lambda)^{-1}
	B^{-\beta} \psi \bigr\|_Y \\
  &\le \| B^\beta \|_{\mathcal L(X;Y)} \| (B + \lambda)^{-1} \|_{\mathcal L(X)}
	\| B^{-\beta} \|_{\mathcal L(Y;X)} \| \psi \|_Y \\
  &\le \| B^\beta \|_{\mathcal L(X;Y)} \| B^{-\beta} \|_{\mathcal L(Y;X)}
	\frac{c}{1 + \lambda} \|\psi\|_Y.
\end{align*}
This shows that the resolvent of $B$ may be continuously extended to $Y$ and that this extension 
admits the estimate
$\|\widetilde {(B+\lambda)^{-1}}\|_{\mathcal L(Y)}\le \frac {\tilde c}{1+\lambda}$.
Thus, one defines the extension $\widetilde B$ of $B$ to $Y$ as the inverse of $\widetilde {B^{-1}}$.
Since $X \hookrightarrow Y$, $\mathrm{dom}_X(B) \hookrightarrow
\mathrm{dom}_Y(\widetilde B)$. But $\mathrm{dom}_X(B)$
is dense in $X$ by the definition of a positive operator and $X$ was dense in $Y$ by
our assumption. Thus, $\mathrm{dom}_Y(\widetilde B) \supset \mathrm{dom}_X(B)$
is also dense in $Y$.
For (ii) see \cite[Prop.~2.11]{DHP03}.
Finally, assertion (iii) is proved in \cite[Lemma~5.12]{hal/re1}. The main
idea is again that the parabolic solution operator on $L^r(J;X)$ commutes with
the fractional power $B^{-\beta}$.
\end{proof}
\begin{theorem} \label{t-carryover}
 Let $\Omega$ and $D$ satisfy the Assumption ~\ref{assu-general}, let $\mu$ satisfy
 Assumptions~\ref{assu-coeff} and \ref{assu-coeffi} and assume $q \in {[2,\infty[}$.
Then the extension of $-\nabla \cdot \mu \nabla + 1$ from $L^q$ to $W^{-1,q}_D$ 
(being identical with the restriction from $W^{-1,2}_D$) has the following properties:
 \begin{enumerate}
  \item \label{t-carryover:i} It induces a positive operator.
  \item \label{t-carryover:ii} It admits a bounded $H^\infty$-calculus with
	$H^\infty$-angle $\arctan \frac{\|\mu\|_{L^\infty}}{\mu_\bullet}$; in
	particular, it admits bounded imaginary powers.
  \item \label{t-carryover:iii} It satisfies maximal parabolic regularity; in
	particular, its negative generates an analytic semigroup.
 \end{enumerate}
\end{theorem}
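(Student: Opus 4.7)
The plan is to deduce all three claims at once from Lemma~\ref{l-0451}, applied with $X := L^q$, $Y := W^{-1,q}_D$, $B := A_0 = -\nabla \cdot \mu \nabla + 1$ acting on $L^q$, and $\beta := 1/2$. For the abstract setup, $L^q \hookrightarrow W^{-1,q}_D$ is continuous and dense via the extended $L^2$-pairing (the density reduces to that of $C^\infty_D(\Omega)$ in $W^{-1,q}_D$, inherited from its density in the predual $W^{1,q'}_D$). Proposition~\ref{p-semigr}~\ref{p-semigr:ii} gives the bounded $H^\infty$-calculus of $A_0$ on $L^q$ with angle $\arctan(\|\mu\|_{L^\infty}/\mu_\bullet) < \pi/2$, which in particular implies positivity in the sense of Definition~\ref{d-positiv}; and maximal parabolic $L^s$-regularity on $L^q$ then follows from this calculus via the Kalton--Weis theorem, since $L^q$ is UMD.

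Thus only the isomorphism hypothesis $A_0^{1/2} : L^q \to W^{-1,q}_D$ remains, and this is the heart of the argument; I would obtain it by duality. Remark~\ref{r-valid} transfers Assumption~\ref{assu-coeffi} to the adjoint coefficient $\mu^T$, so Theorem~\ref{t-mainsect}, parts \ref{t-mainsect:i} and \ref{t-mainsect:ii} combined, applies to $A_0^\sharp := -\nabla \cdot \mu^T \nabla + 1$ and yields that $(A_0^\sharp)^{1/2} : W^{1,p}_D \to L^p$ is a topological isomorphism for every $p \in {]1,2]}$. Specialising to $p = q'$ for $q \in {[2,\infty[}$ and dualising along the $L^2$-pairing produces a topological isomorphism $((A_0^\sharp)^{1/2})^* : L^q \to W^{-1,q}_D$, which one then identifies with the $1/2$-power of the extension $\widetilde{A_0}$ of $A_0$ to $W^{-1,q}_D$.

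With all hypotheses in hand, Lemma~\ref{l-0451} delivers the three assertions. To reconcile with the statement of the theorem, observe that the abstract extension produced by the lemma coincides with the $W^{-1,q}_D$-restriction of the form-induced operator on $W^{-1,2}_D$: both are positive operators whose graphs contain that of $A_0$ on $\mathrm{dom}_{L^q}(A_0)$, and this subspace is dense. The principal technical obstacle lies in the identification in the previous paragraph, namely matching the Banach-space adjoint $((A_0^\sharp)^{1/2})^*$ with the functional-calculus square root of $\widetilde{A_0}$; the cleanest route is via the Balakrishnan formula $A^{-1/2} = \pi^{-1} \int_0^\infty t^{-1/2}(A+t)^{-1} \, \dd t$ (already used in the proof of Lemma~\ref{l-0451}), which commutes termwise both with taking adjoints and with passing to extensions.
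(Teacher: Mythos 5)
Your proposal is correct and follows essentially the same route as the paper: the isomorphism $A_0^{1/2}:L^q\to W^{-1,q}_D$ is obtained by dualising Theorem~\ref{t-mainsect} applied to $\mu^T$ (legitimised by Remark~\ref{r-valid}), and the three properties are then transported from $L^q$ to $W^{-1,q}_D$ via Lemma~\ref{l-0451} with $\beta=1/2$. The only divergences are in the citations for the $L^q$-level facts: the paper gets positivity from the contraction semigroup via Hille--Yosida and maximal regularity from Lamberton's theorem, whereas you invoke the $H^\infty$-calculus and Kalton--Weis on the UMD space $L^q$ --- both routes are valid, though for positivity in the sense of Definition~\ref{d-positiv} note that sectoriality alone gives only $c/\lambda$, so you still need the uniform bound of the resolvent near $\lambda=0$ (immediate here since the semigroup of $-A_0$ decays like $\e^{-t}$, which is exactly the Hille--Yosida observation).
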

\begin{proof}
Thanks to Remark \ref{r-valid}, the transposed coefficient function $ \mu^T$
also satisfies Assumption \ref{assu-coeffi}. Hence, the operator
\begin{equation} \label{e-001} 
\bigl (-\nabla \cdot \mu^T \nabla +1 \bigr )^{1/2}:W^{1,p}_D \to L^p
\end{equation}
 provides a topological isomorphism for all $p \in {]1,2]}$, according to Theorem \ref{t-mainsect}.
Clearly, the adjoint operator of \eqref{e-001}, being identical with the operator 
$\bigl (-\nabla \cdot \mu \nabla +1 \bigr )^{1/2}:L^q \to W^{-1,q}_D$, with
$q=\frac {p}{p-1} \in {[2,\infty[}$, is also a topological isomorphism. Consequently, we need to know
the asserted properties only on the spaces $L^{q}$ due to Lemma~\ref{l-0451}.

In order to see this for \ref{t-carryover:i}, it suffices to note that on every
space $L^q$, $1 < q < \infty$, the operator $-\nabla \cdot \mu \nabla$
generates a strongly continuous semigroup of contractions (see
Proposition~\ref{p-semigr}), hence, the operator admits the required resolvent
estimate by the Hille-Yosida theorem.

Assertion~\ref{t-carryover:ii} is discussed in Proposition~\ref{p-semigr} and,
concerning \ref{t-carryover:iii}, the contraction property of the semigroup on
all $L^q$ spaces, provides maximal parabolic regularity on these spaces due to
a deep result of Lamberton (see \cite{lambert}).
\end{proof}

%
%
%
%
%
%

\end{document}